\theoremstyle{plain} 
\newtheorem{theorem}{\bf Theorem}[section]
\newtheorem{lemma}[theorem]{\bf Lemma}
\newtheorem{corollary}[theorem]{\bf Corollary}
\newtheorem{proposition}[theorem]{\bf Proposition}
\theoremstyle{definition} 
\newtheorem{definition}[theorem]{\bf Definition}
\newtheorem{remark}[theorem]{\bf Remark}
\newtheorem{question}[theorem]{\bf Question}
\newtheorem{example}[theorem]{\bf Example}
\newtheorem{conj}[theorem]{\bf  Conjecture}
\newcommand*{\DashedArrow}[1][]{\mathbin{\tikz [baseline=-0.25ex,-latex, dashed,#1] \draw [#1] (0pt,0.5ex) -- (1.3em,0.5ex);}}%
\newcommand{\res}{\operatorname{res}}
\newcommand{\Spec}{\operatorname{Spec}}
\newcommand{\Socle}{\operatorname{Socle}}
\DeclareRobustCommand{\Udots}{%
  \vcenter{\offinterlineskip
    \halign{%
      \hbox to .8em{##}\cr
      \hfil.\cr\noalign{\kern.2ex}
      \hfil.\hfil\cr\noalign{\kern.2ex}
      .\hfil\cr}%
  }%
}
 \newcommand{\R}{\mathbb{R}}
\newcommand{\sbseq}{\subseteq}
\newcommand{\spseq}{\supseteq}
\newcommand{\vanish}[1]{}
 \newcommand{\de}{\delta}
\def\sbs\subset
\def\sbseq{\subseteq}
\def\langle{\left<}
\def\rangle{\right>}
\def\({\left(}     
\def\){\right)}
\def\no={\,{\,|\!\!\!\!\!=\,\,}}
\def\no={\,{\,|\!\!\!\!\!=\,\,}}
\def\sbseq{\subseteq}
\def\sgn{\operatorname{sgn}}
\def\sbseq{\subseteq}
\def\sbs\subset
\def\spseq{\supseteq}
\newcommand{\xqedhere}[2]{%
	\rlap{\hbox to#1{\hfil\llap{\ensuremath{#2}}}}}
\newcommand\Defn[1]{\textbf{#1}}
\newcommand{\cm}[1]{}
\newcommand\mbf[1]{\mathbf{#1}}
\newcommand\mr[1]{\mathrm{#1}}
\newcommand{\fld}{\mathbbm{k}}
\newcommand\vo{\mr{vol}}
\renewcommand\de{\mr{deg}}
\newcommand{\bigslant}[2]{{\raisebox{.3em}{$#1$} \Big/ \raisebox{-.3em}{$#2$}}}
\newcommand\x{\mathbf{x}}
\newcommand\AR{\mathcal{A}}
\title
{The volume intrinsic to a commutative graded algebra}
\author[Karim Adiprasito]{Karim Alexander Adiprasito}
\address{{Karim Adiprasito}, Sorbonne Université and Université Paris Cité, CNRS, IMJ-PRG, F-75005 Paris, France}
\email{karim.adiprasito@imj-prg.fr }
\author[Stavros Papadakis]{Stavros Argyrios Papadakis}
\address{{Stavros Papadakis}, Department of Mathematics, University of Ioannina, Ioannina, 45110, Greece}
\email{spapadak@uoi.gr}
\author[Vasiliki Petrotou]{Vasiliki Petrotou}
\address{{Vasiliki Petrotou}, Einstein Institute of Mathematics, Hebrew University
of Jerusalem,  Jerusalem,  91904 Israel \ \emph{ and} \  Sorbonne Université and Université Paris Cité, CNRS, IMJ-PRG, F-75005 Paris, France}
\email{vpetrotou1@gmail.com}
\begin{document}


\begin{abstract}
Recent works of the authors have demonstrated the usefulness of considering moduli spaces of Artinian reductions of a given ring when studying 
standard graded rings and their Lefschetz properties. This paper illuminates a key aspect of these works, the 
behaviour of the canonical module under deformations in this moduli space. We demonstrate that even when there 
is no natural geometry around, we can give a viewpoint that behaves like it, effectively constructing geometry 
out of nothing, giving interpretation to intersection numbers without cycles. Moreover, we explore some properties of this normalization.

\end{abstract}

\maketitle

\section{Introduction}   

The fundamental class of a variety or manifold is an important invariant, 
not least of all because it allows us to formulate Poincar\'e  Duality. 
And it has a natural geometric interpretation as well: in a $d$-dimensional smooth 
manifold, de Rham cohomology associates it with the integration of a $d$-form over the manifold. And 
at this point, we can study what happens to the fundamental class under small deformations of the 
manifold. And if it makes sense to fix a $d$-form $\omega$ under this deformation, how does the evaluation under the fundamental class change?

We are in particular inspired by another case: That of toric varieties. If we consider a complete, rationally smooth toric 
variety of dimension $d$, we can consider it as part of an entire moduli space: we consider the family of all toric varieties 
with the same underlying combinatorics, or in other words, the same equivariant cohomology, parametrized by the 
torus action. Say it makes sense to consider an element $\omega$ of the top cohomology in such a way that we can study it under 
a deformation of the torus action. How does the evaluation change when we vary in the moduli space?

This question is invaluable, in particular in recent works of the authors, and studying it led to 
advances in combinatorial Lefschetz theory. It is, moreover, a first step towards understanding these new theories.

Let us once more examine the case of toric varieties, and explain what we mean.

Consider a unimodular complete fan $\Sigma$ in $\R^d$, and the toric variety $X$ associated 
to it. There are natural ways of thinking about the cohomology ring of $X_\Sigma$.

Here are two: 

\textbf{First}, we can consider the ring of conewise polynomial functions $\mathcal{P}(\Sigma)$; it is isomorphic to the equivariant cohomology ring of $X_\Sigma$.

The cohomology ring of $X_\Sigma$ is equally easy to compute:
\[\mathcal{A}(\Sigma)\ :=\ \bigslant{\mathcal{P}(\Sigma)}{\langle \mathcal{G}\rangle}\]
where $\mathcal{G}$ is the ideal generated by global polynomials. It is naturally a Poincar\'e duality algebra, so that 
\[\mathcal{A}(\Sigma)_d\ \cong\ \R.\]
But what is the map? What is
\[\vo:\mathcal{A}(\Sigma)_d\ \rightarrow\ \R.\]
It is nothing more natural than to think about this map, also known as the volume map, in terms of de Rham cohomology. We then 
obtain, assuming the fan $\Sigma$ is unimodular, or equivalently $X_\Sigma$ is smooth, a natural normalization. Consider for instance the natural basis for
$\mathcal{A}(\Sigma)_1$: the characteristic function of a ray $\chi_\rho$ is conewise linear and vanishes on each 
ray of $\Sigma$ but $\rho$, where it is positive. Let $v_\rho$ denote the point on $\rho$ where it attains the value $1$.

Then, if $(\rho_{i})_{i\in [d]}$ is a collection of $d$ rays of $\Sigma$, then 
\[
       \vo\left(\prod_{i\in d} \chi_{\rho_i}\right)\ =\ \left|\frac{1 }{\Delta{v_{\rho_i}}} \right|,
\]
where  ${\Delta{v_{\rho_i}}}$  denotes the determinant of the $d \times d$ matrix with $i$-th row equal to $v_{\rho_i}$.

That is, in a circumvent way, what happens in toric varieties, where the above recovers the natural degree map coming from algebraic geometry, computing intersection numbers faithfully. As most of what we consider takes place in the realm of commutative algebra, and degree has two possible meanings, we shall forgo calling it degree throughout, and instead refer to this as the volume map throughout.

\textbf{Second}, to explain this another way, let us switch models, and discuss what happens in Stanley-Reisner rings, or rather, face rings.

Consider $\Sigma$ a triangulated sphere of dimension $d-1$, and $\fld$ an infinite field. 
The face ring of $\Sigma$ is $\fld[\Sigma]$, the quotient of the polynomial ring $\fld[\mbf{x}]$ (with indeterminates 
indexed by the vertices) by the ideal $I_\Sigma$ generated by nonfaces.

To obtain a Poincar\'e duality algebra, we consider the quotient of $\fld[\Sigma]$ by $\Theta\fld[\Sigma]$, where $\Theta=(\theta_{i,j}) \mbf{x}$
 is a collection of $d$ linear forms, encoded by a matrix $(\theta_{i,j})$ of entries in $\fld$ whose columns are indexed 
by the vertices of $\Sigma$ and the indeterminates of the face ring. If we think of the fundamental class of the simplicial homology 
as a cycle $\mu=\mu_\Sigma$, then  for $F$ a facet of $\Sigma$  we have 
\[\vo\left(\prod_{i\in F} x_i\right)\ =\ \frac{\sgn_\mu F}{\Delta ((\theta_{i,j}))_{|F}} \]
where $\sgn_\mu F$ is the oriented sign of $F$ and $\Delta ((\theta_{i,j}))_{|F}$ is the determinant of the minor of $(\theta_{i,j})$ cut out by $F$.

So, we have an, in several ways natural, normalization of the volume map. And we have established several times now how powerful understanding this function can be. Let us recall why, and then aim at understanding the volume map in general.

\subsection{Motivation: Anisotropy and generic Artinian reductions}

One of the most important motivations towards understanding the geometry of volume maps arises 
in Lefschetz theory: In \cite{AHL}, it was observed that the hard Lefschetz property for a Gorenstein standard graded ring $R$ is true 
if for a sufficiently large set of ideals $I$ in $\AR(R)$, an Artinian reduction of $R$, the Poincar\'e pairing is non-degenerate when 
restricted to $I$. In \cite{PP}, and subsequently \cite{APP, APPS}, one studies the most extreme form of this principle: non-degeneracy at principal ideals.

In other words, we consider $u$ in $\AR(R)$ of degree at most $k$, $k$ less 
or equal to $\nicefrac{d}{2}$, $d$ being the socle degree, and investigate nonvanishing 
of $u^2$. We may assume that $k=\nicefrac{d}{2}$. Now, there is an issue here: How do we choose the appropriate 
Artinian reduction, indexed by the linear system of parameters $\Theta=(\theta_{i,j}) \mbf{x}$? The trick is to pass from $R$,
which is  a quotient of a polynomial ring over a field $\fld$, to a quotient of a polynomial ring over the purely transcendental
field extension $\fld(\theta_{i,j})$.

Now, the idea is simple: We want to understand $\vo(u^2)$ as a rational function  in the variables $(\theta_{i,j})$. 

In \cite{PP} and \cite{APP}, this is then understood by considering differential identities for $\vo(u^2)$ to understand the case of face rings.

In \cite{APPS}, we explore instead semigroup algebras associated to lattice polytopes, and understand $\vo(u^2)$ by exploring Parseval-Rayleigh identities for $\vo(\cdot)$. 

But that leaves out the main question: How does $\vo(\cdot)$ behave at all, how does it depend on $\theta_{i,j}$? Our ultimate goal is to understand this problem towards the following conjecture:

\begin{conj}
	Consider $R$ a reduced standard graded Gorenstein ring. Then a generic Artinian reduction of $R$ has the strong Lefschetz property.
\end{conj}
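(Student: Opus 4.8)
The plan is to reduce the statement, via the non-degeneracy philosophy of \cite{AHL}, to a nonvanishing assertion for the volume map on the moduli space of Artinian reductions, and then to attack that assertion with the deformation theory of the canonical module developed above. Assume $\fld$ infinite, write $R=\fld[\mbf{x}]/I$ with $I$ radical, put $d=\dim R$, and pass to the purely transcendental extension $K=\fld(\theta_{i,j})$ with a generic parameter system $\Theta=(\theta_{i,j})\mbf{x}$, so that $A:=\AR(R)$ is the generic Artinian reduction; it is Gorenstein of some socle degree $D$. Since the Poincar\'e pairing of $A$ is perfect, strong Lefschetz for $A$ amounts to the existence of a linear form $\ell$ making, for every $k\le D/2$, the symmetric form $A_k\times A_k\to K$, $(u,v)\mapsto\vo(\ell^{D-2k}uv)$, nondegenerate; by openness of the Lefschetz locus it is enough to verify this for a generic $\ell$. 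Following \cite{PP,APP,APPS}, it suffices to prove nondegeneracy at principal ideals: that $\vo(\ell^{D-2k}u^2)\in K$ is nonzero for every nonzero $u\in A_k$. Thus the whole problem becomes: understand $\vo(\ell^{D-2k}u^2)$ as a rational function of the $\theta_{i,j}$ and show it does not vanish --- exactly the behaviour of the volume intrinsic to $R$ that this paper studies.

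The mechanism I would use for the nonvanishing is the one that succeeded in the face-ring and semigroup cases: produce a differential identity for $\vo(\ell^{D-2k}u^2)$ in the variables $\theta_{i,j}$ --- a Hasse-derivative identity after base change to characteristic $2$, as in \cite{PP,APP}, or a Parseval--Rayleigh identity, as in \cite{APPS} --- exhibiting the value, after a suitable generic specialization or over $K$ directly, as a manifestly nonzero quantity (a nonzero sum of squares in characteristic $0$, a nonzero Frobenius-twisted expression in characteristic $p$). Equivalently, in characteristic $0$ one asks that the higher Hessians of the Macaulay dual form $F_\Theta$ of $A$ be nonzero, and $F_\Theta$ is, up to rescaling of monomials, the generating function of the values $\vo(\mbf{x}^\alpha)$; so again one is asking for a structural property of the intrinsic volume as $\Theta$ varies. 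Reducedness enters at the ground level: since $R$ is reduced, $\Spec R=\bigcup_j\Spec R_j$ decomposes into its components (all of dimension $d$, as $R$ is Cohen--Macaulay), and the exact sequence $0\to R\to\prod_j R_j\to C\to 0$ with $C$ supported in codimension one expresses the volume of $A$ in terms of volumes attached to the $\AR(R_j)$ and to the codimension-one intersections --- the analogue of writing the degree map of a triangulated sphere as an alternating sum over facets. This furnishes a handle on the volume and, in particular, shows it is not identically zero.

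The role of the present paper in this scheme is to supply the piece that has no combinatorial substitute. In the face-ring and semigroup settings the differential identities are built from explicit monomial bookkeeping and an induction over vertices or lattice directions; for a general reduced Gorenstein $R$ there is no such scaffolding, and the identity must instead be read off from how $\omega_R$ --- hence the volume map, which by the constructions of this paper behaves like integration of a fixed top form against a fundamental class even with no ambient variety present --- transforms under deformation in the moduli space. Concretely one needs: (i) that the local orientations appearing in the component decomposition are coherent across components, a refinement of Hartshorne connectedness to a statement about $\omega_R$, which this paper addresses; and (ii) an intrinsic differential relation for $\vo(\cdot)$ under infinitesimal motion of $\Theta$, playing the role of the combinatorial Hasse-derivative identity. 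Granting (i) and (ii), nonvanishing of $\vo(\ell^{D-2k}u^2)$, and with it strong Lefschetz for the generic reduction, would follow by the same formal argument as in \cite{PP,APP,APPS}.

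The main obstacle is precisely (ii): extracting, from Gorenstein duality and reducedness alone, an intrinsic differential identity for the volume that forces the relevant Hessians to be nonzero. In the known cases this identity is a small miracle of monomial combinatorics; replacing it by something that sees only the abstract ring is the heart of the matter, and is in effect the goal toward which this paper is a first step. A secondary difficulty is positive characteristic, where the sum-of-squares argument must be replaced by a Frobenius argument and a generic reduction over an imperfect field may not suffice; if so, the conjecture should be qualified to characteristic $0$, or to perfect or sufficiently large fields.
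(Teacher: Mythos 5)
The statement you are addressing is stated in the paper as a \emph{conjecture}, not a theorem: the paper offers no proof of it, and explicitly presents the study of the volume map as ``a first step'' toward it. Your text is accordingly not a proof but a research program, and you say so yourself --- your items (i) and (ii), the coherence of orientations across components and the intrinsic differential identity for $\vo(\cdot)$ under motion of $\Theta$, are exactly the open content of the conjecture, and ``granting (i) and (ii)'' is granting the theorem. So there is no basis for comparison with a proof in the paper; the honest verdict is that the statement remains unproved on both sides.

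Two concrete points where your sketch overreaches beyond being merely incomplete. First, the reduction of strong Lefschetz to nonvanishing of $\vo(\ell^{D-2k}u^2)$ for all nonzero $u$ is a reduction to \emph{anisotropy}, which is strictly stronger than Lefschetz and is known to fail over some fields in characteristic zero without first passing to characteristic $2$ (or $p$) and then lifting; the logic of \cite{PP} and \cite{APP} runs through that detour, so ``it suffices to prove nondegeneracy at principal ideals'' needs the same qualification, and your closing caveat about positive characteristic has the direction backwards --- positive characteristic is where the method works, and characteristic zero is reached only by a semicontinuity argument afterwards. Second, the exact sequence $0\to R\to\prod_j R_j\to C\to 0$ does not ``express the volume of $A$ in terms of volumes attached to the $\AR(R_j)$'': Artinian reduction is not exact, the components $R_j$ need not be Gorenstein (so they carry no volume map in the sense of this paper, only a socle which may have large dimension), and a generic linear system of parameters for $R$ need not restrict to one interacting well with $C$. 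Making that decomposition work is precisely the kind of statement the relative normalization of the paper is designed to approach, but it is not available as a cited fact. As written, your proposal correctly reproduces the motivation section of the paper; it does not close any of the gaps.
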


This is motivated by a slightly less general conjecture of Stanley \cite{Braun}: he asks whether the same is true for 
integral domains $R$ (which is obviously more restrictive than reduced). Alas, to start with the methods of generic Lefschetz theory and attack 
either conjecture, the first question one must ask, and which we tackle here, is the question: What is $\vo(\cdot)$? To learn to speak of it concretely is a main goal of this paper.

\subsection{Plan for the paper} 

Philosophically speaking, our normalization is the minimal polynomial identity the volume map satisfies. We make this precise 
in the next section by introducing the notion of a generic polynomial reduction. We then introduce the Kustin-Miller, or KM normalization, 
using this notion, and immediately note two alternative ways of doing so. We then illustrate our theory on some important examples. 
To make the theory more flexible, we finally introduce relative normalization, and show that it explains several phenomena 
of the normalization more succinctly, and allows us to explain the normalization for semigroup algebras of lattice polytopes 
introduced and studied in \cite {APPS}.

\section{Generic polynomial reduction}      \label{sec!gericpolynomialreduction}

We introduce and study the notion of generic polynomial reduction of a graded algebra, 
which will play an essential role
in the definition of volume normalization in Section~\ref{sec!dfnofdegreenormalizationiso}.

Assume $m \geq 1$ and  $\fld$ is a field.  We consider the polynomial 
ring  $\fld[x_1, \dots , x_m]$, where the degree of the variable $x_i$ is equal to $1$,
for all $1 \leq i \leq m$.  Assume   $I \subset \fld[x_1, \dots , x_m]$ 
is a homogeneous ideal. We denote by $d$ the  
Krull dimension of the quotient ring  $\fld[x_1, \dots , x_m]/I$ 
and  assume  $d \geq 1$.

We assume $\theta_{i,j}$ are new variables, and we set 
$R_{up}$ to be the  graded polynomial ring
\[
R_{up} =    \fld[x_1, \dots , x_m, \theta_{i,j} :  1 \leq i \leq d,  \;  1 \leq j \leq m ].
\]
We equip $R_{up}$ with a bidegree as follows:  We say that an element  $u \in R_{up}$
is  bihomogeneous of degree $(b,c)$ if when considered as a polynomial in $x_i$ 
is homogeneous of degree $b$ and when considered as a polynomial in $\theta_{i,j} $ 
is homogeneous of degree $c$.
We set,  for $1 \leq i \leq d$,
\[
f_i = \sum_{j=1}^{m} \theta_{i,j} x_j.
\]
We denote by $I_{up}$ the ideal   of $R_{up}$ generated by the subset 
$\; I \cup \{ f_1, \dots , f_d\}$,  and by $N_{up}$ the ideal
\[
N_{up} =  I_{up}  : (x_1, \dots , x_m).
\]

\begin{definition}  \label{dfn!genericpolynomialreduction}
	We define the {\it generic polynomial reduction}  of   $\fld[x_1, \dots , x_m]/I$  to be the
	$R_{up}$-algebra
	\[
	R_{up}/ I_{up}.
	\]
\end{definition}

In Subsection~\ref{subs!dimensionformulaforrupjup} we will prove the following important
proposition.

\begin{proposition}   \label{prop!dimformulaforrupjup}  We have 
	\[
	\dim  ( R_{up} / I_{up})  =  dm.
	\]    
	In other words, the Krull dimension of $  R_{up} / I_{up}$ is equal to $dm$.
\end{proposition}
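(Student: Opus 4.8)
The plan is to establish the two inequalities $\dim(R_{up}/I_{up})\ge dm$ and $\dim(R_{up}/I_{up})\le dm$ separately: the first by elementary dimension theory of finitely generated algebras over a field, the second by a fibre-dimension estimate for the projection that forgets the $\theta$-variables.

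For the lower bound, write $A=\fld[x_1,\dots,x_m]/I$ and note that $R_{up}/I_{up}=S/(f_1,\dots,f_d)$, where $S:=A[\theta_{i,j}]$ is a polynomial ring in $dm$ variables over $A$, so that $\dim S=\dim A+dm=d+dm$. Since $S$ is a finitely generated $\fld$-algebra, the ideal $(f_1,\dots,f_d)$ is generated by $d$ elements, and $R_{up}/I_{up}\ne 0$ (as one sees by specializing all $\theta_{i,j}$ to $0$), Krull's height theorem together with the dimension formula $\operatorname{ht}\mathfrak q+\dim(B/\mathfrak q)=\dim B$ for finitely generated domains $B$ over a field gives $\dim\bigl(S/(f_1,\dots,f_d)\bigr)\ge\dim S-d=dm$.

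For the upper bound, regard $X:=\Spec(R_{up}/I_{up})$ as the closed subscheme of $\mathbb{A}^m_x\times\mathbb{A}^{dm}_\theta$ cut out by $I$ and by $f_1,\dots,f_d$, and consider the projection $p\colon X\to\mathbb{A}^m_x$. Its image is contained in $V(I)$, which has dimension $d$ and, since $I$ is homogeneous, contains the origin. Stratify the target as $\{0\}\sqcup(\mathbb{A}^m_x\setminus\{0\})$. Over the origin each $f_i$ specializes to $0$, so the fibre is all of $\mathbb{A}^{dm}_\theta$, of dimension $dm$. Over a point $x^{*}\in V(I)\setminus\{0\}$ the $f_i$ specialize to the linear forms $\ell_i=\sum_j x^{*}_j\theta_{i,j}$, which are nonzero and, crucially, involve pairwise disjoint blocks of $\theta$-variables (indeed $f_i$ uses only $\theta_{i,1},\dots,\theta_{i,m}$); hence the fibre is an affine space of dimension $dm-d$. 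By the standard bound on the dimension of a finite-type morphism in terms of its fibres, $\dim p^{-1}(\mathbb{A}^m_x\setminus\{0\})\le\dim\bigl(V(I)\setminus\{0\}\bigr)+(dm-d)\le d+(dm-d)=dm$. Since $X$ is the disjoint union of the closed subscheme $p^{-1}(0)$ and the open subscheme $p^{-1}(\mathbb{A}^m_x\setminus\{0\})$, any irreducible closed subset of $X$ either lies in the former or meets the latter in a dense open subset, and in both cases has dimension at most $dm$; hence $\dim X\le dm$.

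The two invoked facts of dimension theory — that a quotient by $d$ elements drops dimension by at most $d$, and the fibre-dimension inequality for morphisms of finite type — are standard. The only point that requires a little thought is the upper bound: a naive bound over $\mathbb{A}^m_x$ using the maximal fibre dimension would include the vertex fibre and give only the useless estimate $d+dm$, so one must separate off the origin, over which the fibre is large but which is only a single point. The identification of the remaining fibres as affine spaces of dimension exactly $dm-d$ rests on the observation that $f_i$ involves only the variables $\theta_{i,1},\dots,\theta_{i,m}$. I expect this bookkeeping — splitting off the vertex and exploiting the disjoint variable blocks — to be the main thing to get right; everything else is formal.
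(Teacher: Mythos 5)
Your argument is correct, and while your lower bound is essentially the paper's (the paper also computes $\dim R_{up}/(I)=d+dm$ and quotes \cite[Proposition~A.4]{BH} to drop at most $d$), your upper bound takes a genuinely different route. The paper first reduces to an infinite field, then specializes $\theta_{i,j}\mapsto c_{i,j}\theta_{i,1}$ for Zariski-general $c_{i,j}$, landing on the ideal $(I,\theta_{1,1}q_1,\dots,\theta_{d,1}q_d)$ with $q_i=x_1+\sum_j c_{i,j}x_j$, and controls its dimension with a dedicated lemma about quotients by products $f_{(i,1)}f_{(i,2)}$ (Proposition~\ref{prop!gb85i3k9}) together with prime avoidance; your fibre-dimension argument over $\mathbb{A}^m_x$, splitting off the vertex where the fibre jumps to all of $\mathbb{A}^{dm}_\theta$ and using the disjointness of the $\theta_{i,\bullet}$-blocks to see that the other nonempty fibres are affine spaces of dimension $dm-d$, avoids both the genericity step and the infinite-field reduction, and as a bonus makes Corollary~\ref{cor!irrelevantisaminimalprime} (that $(x_1,\dots,x_m)$ is a minimal prime of top dimension) visible at once as the vertex fibre. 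Two small points of hygiene: in the lower bound, $S=A[\theta_{i,j}]$ need not be a domain (nor equidimensional), so you cannot apply the formula $\operatorname{ht}\mathfrak q+\dim(B/\mathfrak q)=\dim B$ to $S$ itself; either pass to a minimal prime of $S$ of maximal dimension (its quotient is a domain, and the $f_i$ are homogeneous of positive degree so the further quotient is nonzero) or, as the paper implicitly does, localize at the irrelevant maximal ideal and use the local form of Krull's theorem. In the upper bound, the cleanest packaging is to apply, to each irreducible component $Z\not\subseteq p^{-1}(0)$, the inequality $\dim Z\le\dim\overline{p(Z)}+\dim\bigl(Z\cap p^{-1}(y)\bigr)$ valid for \emph{every} closed point $y$ in the image of a dominant finite-type morphism of irreducible schemes, choosing $y\ne 0$; this sidesteps any worry about where the maximal fibre dimension is attained.
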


It is clear that the ideals $I_{up}$ and $N_{up}$ are bihomogeneous, hence it follows that
$N_{up}/ I_{up}$  is a  bihomogeneous ideal of the bigraded ring $R_{up}/ I_{up}$.  
For $b \geq 0$  we set
\[
(N_{up}/ I_{up})_{(b,-)} = \bigoplus_{c \geq 0}(N_{up}/ I_{up})_{(b,c)}.
\]
Since $\;  x_i (N_{up}/ I_{up}) = 0$ for all $1 \leq i \leq m$, it follows that 
$(N_{up}/ I_{up})_{(b,-)}$  is an $R_{up}$-submodule of  $N_{up}/ I_{up}$.

In the following,   $h_1,  \dots , h_s$  denote $s$ bihomogeneous elements of $R_{up}$
with the property that $h_1+I_{up},  \dots , h_s+I_{up}$ 
is a  minimal generating set for the $R_{up}$-module $N_{up}/I_{up}$.  Moreover,
$\de_{x} h_t$ denotes  the degree of $h_t$ with respect to the variables $x_j$.

We denote by    $\fld[\theta_{i,j}]$  the polynomial ring
\[
\fld[ \theta_{i,j} :  1 \leq i \leq d,  \;  1 \leq j \leq m ],
\]
by  $E=\fld(\theta_{i,j})$ the field of fractions  of $\fld[\theta_{i,j}]$,  and we set 
\[
R_{down} =    E [x_1, \dots , x_m ],
\]
with $\de(x_i) = 1$ for all $i$.

We denote by $I_{down}$ the ideal   of $R_{down}$ generated by the subset 
$\; I \cup \{ f_1, \dots , f_d\}$,  and by $N_{down}$ the ideal
\[
N_{down} =    I_{down}  : (x_1, \dots , x_m).
\]

Remember now that the 
{\it generic Artinian reduction}  $\AR$  of  $\fld[x_1, \dots , x_m]/I$ is the
Artinian $E$-algebra
\[
\AR = R_{down}/I_{down}.
\]
We denote by $e$ the top degree of  $\AR$, this means that the $e$-th graded
component $\AR_e$ of  $\AR$  is nonzero and $\AR_s = 0$  when $s \geq e+1$.  Obviously  
$\AR_e \subset   \Socle(\AR)$, where 
\[
\Socle(\AR)  = \{  u \in \AR : \; \; u x_i = 0   \;  \; \text{for all}  \;  \; 1 \leq i \leq m  \}
\]
denotes the socle of  $\AR$ .  Moreover, from the definition of $N_{down}$ it follows that
\[
N_{down}/I_{down} =   \Socle(\AR).
\]

We set 
\[
S = \fld[\theta_{i,j} :  1 \leq i \leq d,  \;  1 \leq j \leq m ]  \setminus \{  0 \}.
\]
Clearly  $S$ is a multiplicatively closed subset of $R_{up}$.  Every element of $R_{down}$ 
can be written in  the form  $p/u$   with $p \in  R_{up}$ and $u \in S$. It follows that
$R_{down}$ and $S^{-1}R_{up}$ are naturally isomorphic. In particular, there exists a
natural injective localization map  $\Phi_1 : R_{up} \to  R_{down}$  that sends $p$ to $p$
for all $p \in R_{up}$. 

It is clear that  $\Phi_1  (I_{up}) \subset  I_{down}$ and
$\Phi_1  (N_{up}) \subset  N_{down}$. Consequently, there is an induced homomorphism
\[
\Phi :  N_{up} / I_{up}  \to    N_{down} / I_{down}  = \Socle(\AR) 
\] 
such that   $\; \Phi (u+ I_{up}) =  u+ I_{down} \;$  for all $u \in  N_{up}$.
The map $\Phi$ respects degrees, in  the sense that 
\[
\Phi ( (N_{up} / I_{up})_{(b,-)})   \subset    (N_{down} / I_{down})_b 
\] 
for all $b \geq 0$.   

\begin{proposition}   \label{proposmapessentiallysurjective}  We have 
	\[
	(  \Phi ( N_{up} / I_{up}))  = \Socle(\AR) .
	\]    
	In other words, the ideal of $\AR$  generated by the image of $\Phi$ is equal to 
	$ \Socle(\AR) $.   
\end{proposition}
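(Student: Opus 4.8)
The plan is to show that $\Phi$ is ``essentially surjective'' by a localization argument. The key point is that $N_{down}/I_{down} = \Socle(\AR)$ is a finite-dimensional $E$-vector space, spanned by finitely many socle elements, and each such element, being represented by a polynomial in $R_{down} = S^{-1}R_{up}$, can be cleared of denominators to land in the image of $\Phi_1$.

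First I would pick an $E$-basis (or merely a spanning set) $w_1, \dots, w_r$ of $\Socle(\AR) = N_{down}/I_{down}$. Writing each $w_t$ as $w_t = p_t + I_{down}$ for some $p_t \in R_{down}$, and recalling that every element of $R_{down}$ has the form $p/u$ with $p \in R_{up}$, $u \in S$, I would clear denominators: after multiplying $p_t$ by a suitable nonzero element of $S$ (which is invertible in $\AR$, hence does not change the $E$-span), I may assume $p_t \in R_{up}$. Next I must check that $p_t \in N_{up}$, i.e.\ that $x_i p_t \in I_{up}$ for all $i$: we know $x_i p_t \in I_{down}$; since $I_{down} = S^{-1} I_{up}$, there is $u_i \in S$ with $u_i x_i p_t \in I_{up}$, and replacing $p_t$ by $(u_1 \cdots u_m) p_t$ — again harmless up to the $E$-span — puts $p_t$ into $I_{up} : (x_1, \dots, x_m) = N_{up}$. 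Then $\Phi(p_t + I_{up}) = p_t + I_{down} = w_t$ up to a unit of $E$, so every $w_t$ lies in $\Phi(N_{up}/I_{up})$, and hence so does the whole $E$-span, which is all of $\Socle(\AR)$. This already gives $\Phi(N_{up}/I_{up}) = \Socle(\AR)$ as sets, which is even a little stronger than the ``ideal generated by'' formulation in the statement; in any case it yields the claim.

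The one genuine thing to be careful about — and what I expect to be the main (minor) obstacle — is the bookkeeping in the clearing-of-denominators step: one must make sure that multiplying representatives by elements of $S$ is legitimate, which rests on the fact that $S$ consists of elements of $\fld[\theta_{i,j}] \setminus \{0\}$ that become \emph{units} in $E$ and therefore in $\AR$, so that such multiplications do not alter membership in $\Socle(\AR)$ nor the $E$-span. A second point requiring a line of justification is the identification $I_{down} = S^{-1} I_{up}$ (and the analogous statement for $N$, or rather the direct verification above that a denominator-cleared representative of a socle element lands in $N_{up}$): this follows from $R_{down} \cong S^{-1} R_{up}$ together with the observation that $I_{down}$, $N_{down}$ are defined by the \emph{same} generators/colon operation as $I_{up}$, $N_{up}$, and localization commutes with finite sums of ideals and with colon ideals against finitely generated ideals. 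None of this is deep, but it is the substance of the proof.

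I would not need Proposition~\ref{prop!dimformulaforrupjup} here; the argument is purely a localization/denominator-clearing computation using only that $\Socle(\AR)$ is finite-dimensional over $E$ and that $R_{down} = S^{-1}R_{up}$, both of which are established above.
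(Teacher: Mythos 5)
Your argument is essentially the paper's proof written out in full: the paper compresses the whole computation into the observation $I_{down}=S^{-1}I_{up}$ plus a citation of Atiyah--Macdonald (every ideal of a localization is extended, and colon ideals against finitely generated ideals commute with localization), which is exactly the denominator-clearing you perform by hand, including the step that pushes a cleared representative of a socle element into $N_{up}$. As a proof of the stated proposition --- that the \emph{ideal} of $\AR$ generated by the image of $\Phi$ equals $\Socle(\AR)$ --- it is correct.

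One side remark is false, however: the claim that you obtain $\Phi(N_{up}/I_{up})=\Socle(\AR)$ \emph{as sets}, ``a little stronger'' than the ideal formulation. The image of $\Phi$ is an $R_{up}$-submodule, hence an $\fld[\theta_{i,j}]$-submodule, of $\Socle(\AR)$, but it need not be an $E$-subspace: your construction only produces, for each $w_t$, some nonzero $c_t\in\fld[\theta_{i,j}]$ with $c_t w_t$ in the image, and one cannot divide by $c_t$ inside the image. For instance, in the Gorenstein case $N_{up}/I_{up}$ is cyclic with generator $h+I_{up}$, and since every $x_j$ annihilates $\Phi(h+I_{up})$ the set-theoretic image is $\fld[\theta_{i,j}]\cdot\Phi(h+I_{up})$, a proper subset of the one-dimensional $E$-space $\Socle(\AR)=E\cdot\Phi(h+I_{up})$. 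This is precisely why the proposition is phrased in terms of the generated ideal (and why the paper's Remark~\ref{rem!odlosalf} and Examples~\ref{example!intwovariables}--\ref{example!inthreevariables} are careful about what $\Phi$ of a generating set gives you). The ideal-theoretic conclusion survives because $c_t$ is a unit of $E\subset\AR$; just delete the ``as sets'' claim.
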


\begin{proof}   Since $ I_{down} = S^{-1} I_{up} $,
	the result follows from  \cite[p.~42, Remark~1]{AM}.
\end{proof}

\begin{corollary}   \label{corol!relation758753} 
	Assume $i \geq 0$.   Then 
	\[
	\{  \; \Phi ( h_t +  I_{up})  \;  :  \;   \de_{x} h_t =  i  \;  \}
	\]
	is a generating set for the $E$-vector space $(\Socle(\AR))_i$.
\end{corollary}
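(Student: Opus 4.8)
The plan is to deduce the corollary from Proposition~\ref{proposmapessentiallysurjective} by keeping track of bidegrees. First I would recall the setup: the elements $h_1+I_{up}, \dots, h_s+I_{up}$ form a minimal generating set for the $R_{up}$-module $N_{up}/I_{up}$, and since $x_i$ annihilates this module, each graded piece $(N_{up}/I_{up})_{(b,-)}$ is already an $R_{up}$-submodule, and in fact an $\fld[\theta_{i,j}]$-module on which the $x_i$ act as zero. Fix $i \geq 0$. I claim that the subset $\{h_t + I_{up} : \de_x h_t = i\}$ generates $(N_{up}/I_{up})_{(i,-)}$ as an $\fld[\theta_{i,j}]$-module: indeed, writing an arbitrary element of this piece as $\sum_t r_t (h_t + I_{up})$ with $r_t \in R_{up}$ bihomogeneous, comparison of $x$-degrees forces the only contributing terms to come from those $h_t$ with $\de_x h_t = i$, and for those the coefficient $r_t$ must have $x$-degree $0$, i.e. $r_t \in \fld[\theta_{i,j}]$ (up to $I_{up}$, since $x_i \cdot (N_{up}/I_{up}) = 0$ lets us discard any $x$-positive part of $r_t$).

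Next I would apply $\Phi$. Since $\Phi$ respects degrees in the sense stated, $\Phi$ maps $(N_{up}/I_{up})_{(i,-)}$ into $(\Socle(\AR))_i = (N_{down}/I_{down})_i$, and by Proposition~\ref{proposmapessentiallysurjective} the $\AR$-ideal generated by $\Phi(N_{up}/I_{up})$ is all of $\Socle(\AR)$. But $\Socle(\AR)$ is annihilated by every $x_i$, hence the $\AR$-ideal it generates coincides with the $\AR_0 = E$-subspace it spans; so $\Phi(N_{up}/I_{up})$ spans $\Socle(\AR)$ as an $E$-vector space. Intersecting with the degree-$i$ part and using that $\Phi$ preserves the grading, the image $\Phi((N_{up}/I_{up})_{(i,-)})$ spans $(\Socle(\AR))_i$ over $E$. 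Combining with the previous paragraph, $(\Socle(\AR))_i$ is spanned over $E$ by the set $\{\Phi(h_t + I_{up}) : \de_x h_t = i\}$, since those elements already generate $(N_{up}/I_{up})_{(i,-)}$ over $\fld[\theta_{i,j}] \subset E$ and $\Phi$ is $\fld[\theta_{i,j}]$-linear.

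I expect the only genuinely delicate point to be the bidegree bookkeeping in the first paragraph — specifically, justifying that one may take the coefficients $r_t$ to lie in $\fld[\theta_{i,j}]$ rather than in all of $R_{up}$. This rests on the fact that $x_j \cdot (N_{up}/I_{up}) = 0$, which was already recorded in the excerpt, so that modulo $I_{up}$ any monomial in $r_t$ involving some $x_j$ kills $h_t$; what survives is the $x$-degree-$0$ part of $r_t$, which is a polynomial in the $\theta_{i,j}$ only. Everything else is a formal consequence of Proposition~\ref{proposmapessentiallysurjective} together with the grading-compatibility of $\Phi$, so the argument is short once this point is made precise.
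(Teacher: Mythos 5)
Your argument is correct and follows the same route as the paper's (one-line) proof: both rest on the facts that $x_j$ annihilates $N_{up}/I_{up}$ and hence the ideal of $\AR$ generated by $\Phi(N_{up}/I_{up})$ is just its $E$-span, combined with Proposition~\ref{proposmapessentiallysurjective} and the grading-compatibility of $\Phi$. You have merely spelled out the bidegree bookkeeping that the paper leaves implicit.
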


\begin{proof}  Since for all $1 \leq j \leq m$ we have  $x_j (h_t +  I_{up}) = 0 + I_{up}$,
	we have that  the result follows   from  Proposition~\ref{proposmapessentiallysurjective}.  
\end{proof}

\begin{remark}   \label{remew!relation758753} 
	An immediate consequence of  Corollary~\ref{corol!relation758753} is that if $i \geq 0$ has the
	property that  $(\Socle(\AR))_i$ is nonzero, then  there exists $t$ such that   $\de_{x} h_t = i$.
	In particular, since  $ (\Socle(\AR))_e = \AR_e  \not= 0$, there exists $h_t$  with  $\de_{x} h_t = e$.
\end{remark}

\begin{remark}   \label{rem!odlosalf}
	See  Examples~\ref{example!intwovariables} and 
	\ref{example!inthreevariables}  for two examples where $\Phi$ is not injective
	and  (for suitable $i$)  the set 
	\[
	\{ \;   \Phi ( h_t +  I_{up})   \;  :  \;   \de_{x} h_t =  i   \;  \}
	\]
	is not a  basis of $\AR_i$ as  $E$-vector space.
\end{remark}

\begin{corollary}   \label{cor!irrelevantisaminimalprime}  
	The ideal   $( x_1,  \dots , x_m )$  of $R_{up}$ is a minimal associated prime ideal of  $R_{up} / I_{up}$.
\end{corollary}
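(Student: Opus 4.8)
The plan is to show directly that $(x_1, \dots, x_m)$ is a minimal prime over $I_{up}$; since every minimal prime over an ideal is an associated prime and these are precisely the minimal associated primes, this is exactly what the corollary asks for. The key input is the dimension formula of Proposition~\ref{prop!dimformulaforrupjup}, together with the fact that $R_{up}$, being a polynomial ring over a field, is a finitely generated integral domain over $\fld$ and hence satisfies $\dim R_{up} = \operatorname{ht}\mathfrak{p} + \dim(R_{up}/\mathfrak{p})$ for every prime $\mathfrak{p}$.

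First I would record two elementary facts. Since $d \geq 1$ the homogeneous ideal $I$ is proper, so it contains no nonzero constant and therefore $I \subseteq (x_1, \dots, x_m)\fld[x_1,\dots,x_m]$; as each $f_i = \sum_{j} \theta_{i,j} x_j$ also lies in $(x_1, \dots, x_m)$, we conclude $I_{up} \subseteq (x_1, \dots, x_m)$, so $(x_1,\dots,x_m)$ is at least a prime containing $I_{up}$. Moreover $(x_1, \dots, x_m)$ is prime in $R_{up}$ because $R_{up}/(x_1, \dots, x_m) \cong \fld[\theta_{i,j} : 1 \leq i \leq d,\ 1 \leq j \leq m]$ is an integral domain, and this isomorphism also shows $\dim\bigl(R_{up}/(x_1, \dots, x_m)\bigr) = dm$, the number of the variables $\theta_{i,j}$.

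Now I would argue by contradiction. If $(x_1, \dots, x_m)$ were not minimal over $I_{up}$, there would be a prime $\mathfrak{q}$ of $R_{up}$ with $I_{up} \subseteq \mathfrak{q} \subsetneq (x_1, \dots, x_m)$. Applying the dimension formula above to $\mathfrak{q}$ and to $(x_1, \dots, x_m)$, the strict inclusion forces $\operatorname{ht}\mathfrak{q} < \operatorname{ht}(x_1, \dots, x_m)$, hence $\dim(R_{up}/\mathfrak{q}) > \dim\bigl(R_{up}/(x_1, \dots, x_m)\bigr) = dm$. On the other hand $\mathfrak{q} \supseteq I_{up}$ gives $\dim(R_{up}/\mathfrak{q}) \leq \dim(R_{up}/I_{up}) = dm$ by Proposition~\ref{prop!dimformulaforrupjup}, a contradiction. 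Therefore $(x_1, \dots, x_m)$ is a minimal prime over $I_{up}$, which proves the corollary.

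There is no genuine obstacle here; the one point that must not be glossed over is that a strict inclusion of primes in $R_{up}$ strictly increases the dimension of the quotient. This is false for a general Noetherian ring, so one should explicitly use that $R_{up}$ is a polynomial ring over a field — equivalently a finitely generated algebra over $\fld$, hence universally catenary with the displayed dimension formula. The remaining ingredient, $\dim(R_{up}/I_{up}) = dm$, is precisely Proposition~\ref{prop!dimformulaforrupjup} and is taken as given.
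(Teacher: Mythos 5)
Your proof is correct and follows essentially the same route as the paper: note $I_{up}\subseteq(x_1,\dots,x_m)$, compare $\dim R_{up}/(x_1,\dots,x_m)=dm$ with $\dim R_{up}/I_{up}=dm$ from Proposition~\ref{prop!dimformulaforrupjup} to get minimality, and then use that minimal primes over an ideal are associated primes (the paper cites \cite[Theorem~3.1 a.]{Eis} for this last step). One small remark: your caution about strict inclusions of primes is unnecessary --- in \emph{any} commutative ring, $\mathfrak q\subsetneq\mathfrak p$ implies $\dim(R/\mathfrak q)\ge \dim(R/\mathfrak p)+1$ whenever the latter is finite, simply by prepending $\mathfrak q$ to a chain above $\mathfrak p$, so the catenary/height formula is not actually needed.
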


\begin{proof}   
	We have  $I_{up} \subset ( x_1,  \dots , x_m )$. Since 
	$\dim  R_{up}/( x_1,  \dots , x_m ) = dm$ and 
	by, Proposition~\ref{prop!dimformulaforrupjup},
	$\dim   R_{up} / I_{up}  =  dm$, we get that the ideal 
	$( x_1,  \dots , x_m )$ is a minimal prime of $I_{up}$. 
	Hence, by  \cite[Theorem~3.1 a.]{Eis}
	it is also an associated prime.  
\end{proof}

\begin{corollary}   \label{cor!cohenmacaulaysubcase21} Assume 
	$\fld[x_1, \dots , x_m]/I$ is  Cohen-Macaulay.  Then  $R_{up} / I_{up}$ 
	is Cohen-Macaulay.
\end{corollary}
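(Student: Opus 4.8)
The plan is to reduce the Cohen--Macaulayness of $R_{up}/I_{up}$ to that of $\fld[x_1,\dots,x_m]/I$ by realizing $R_{up}/I_{up}$ as obtained from $(\fld[x_1,\dots,x_m]/I)[\theta_{i,j}]$ by quotienting out the regular sequence $f_1,\dots,f_d$. More precisely, set $A=\fld[x_1,\dots,x_m]/I$ and consider the polynomial extension $B=A[\theta_{i,j}:1\leq i\leq d,\ 1\leq j\leq m]$. Since $A$ is Cohen--Macaulay, so is $B$ (Cohen--Macaulayness is preserved under adjoining polynomial variables), and $\dim B=\dim A+dm=d+dm$. Now $R_{up}/I_{up}$ is by construction $B/(f_1,\dots,f_d)B$, where $f_i=\sum_j\theta_{i,j}x_j$.

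First I would show that $f_1,\dots,f_d$ is a regular sequence on $B$. The cleanest way: by Proposition~\ref{prop!dimformulaforrupjup} we have $\dim(R_{up}/I_{up})=dm=\dim B-d$, so the $d$ elements $f_1,\dots,f_d$ cut the dimension down by exactly $d$; on a Cohen--Macaulay ring, a sequence of elements whose number equals the drop in dimension (equivalently, a system that is part of a system of parameters, localized at a maximal ideal containing them) is automatically a regular sequence. Concretely, since $B$ is Cohen--Macaulay and graded (with the $x_i,\theta_{i,j}$ all positive-degree or, more carefully, with a suitable grading making the relevant localization work — here bidegree suffices to control things), $\operatorname{depth}$ equals $\dim$ locally, and an ideal generated by $c$ elements has height $\leq c$ with equality forcing the generators to form a regular sequence. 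Here $\operatorname{ht}(f_1,\dots,f_d)B=\dim B-\dim(B/(f_1,\dots,f_d))=d$, matching the number of generators, so $f_1,\dots,f_d$ is a regular sequence on the Cohen--Macaulay ring $B$.

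Then I would invoke the standard fact that quotienting a Cohen--Macaulay ring by a regular sequence yields a Cohen--Macaulay ring (see e.g. \cite[Theorem~2.1.3]{Eis}-type results, or any of the references already cited). This gives that $R_{up}/I_{up}=B/(f_1,\dots,f_d)B$ is Cohen--Macaulay, completing the proof.

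The main obstacle is the passage from ``$\dim$ drops by $d$'' to ``$f_1,\dots,f_d$ is a regular sequence'': this inference requires that the ring $B$ be Cohen--Macaulay and that one works locally at an appropriate prime (the argument $\operatorname{ht}=\#\{\text{generators}\}\Rightarrow$ regular sequence holds after localizing at a minimal prime of the ideal generated, using unmixedness of Cohen--Macaulay rings). One must be slightly careful because $R_{up}/I_{up}$ has several associated primes — including the irrelevant ideal $(x_1,\dots,x_m)$ by Corollary~\ref{cor!irrelevantisaminimalprime} — so the height computation $\operatorname{ht}(f_1,\dots,f_d)B = d$ should be read as: every minimal prime of $(f_1,\dots,f_d)B$ has height exactly $d$, which follows from Proposition~\ref{prop!dimformulaforrupjup} together with the fact that $B$, being Cohen--Macaulay, is equidimensional on each of its components. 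Once this unmixedness is in hand, the regular-sequence property and hence the conclusion follow formally.
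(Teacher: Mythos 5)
Your proposal is correct and follows essentially the same route as the paper: the paper's (one-line) proof also identifies $R_{up}/I_{up}$ as the quotient of the Cohen--Macaulay ring $R_{up}/(I)\cong(\fld[x_1,\dots,x_m]/I)[\theta_{i,j}]$ by the homogeneous sequence $f_1,\dots,f_d$, uses Proposition~\ref{prop!dimformulaforrupjup} to see that this sequence is regular, and concludes by the standard fact that a quotient of a Cohen--Macaulay ring by a regular sequence is Cohen--Macaulay. Your write-up merely supplies the details (equidimensionality/unmixedness in the graded CM setting) that the paper leaves implicit.
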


\begin{proof}   
	Using the assumption,  we get by Proposition~\ref{prop!dimformulaforrupjup} 
	that $R_{up}/ I_{up}$ is Cohen-Macaulay, since it is the quotient of 
	$R_{up}/ (I)$ by a homogeneous regular sequence.  
\end{proof}

\begin{proposition}   \label{prop!whatifnboembeddedprimes} 
	Assume    $R_{up} / I_{up}$ has no embedded associated prime ideals.  Then we have  that the map 
	$\Phi :  N_{up} / I_{up}   \to  \Socle(\AR)$ is injective. 
\end{proposition}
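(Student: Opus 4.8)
The plan is to identify $\ker\Phi$ with the $S$-torsion of $N_{up}/I_{up}$ and then show this torsion vanishes by testing at the associated primes of $R_{up}/I_{up}$. Write $M:=R_{up}/I_{up}$ and $\m:=(x_1,\dots,x_m)$. Since $\Phi$ is induced by the localization map $\Phi_1:R_{up}\to R_{down}=S^{-1}R_{up}$ and $I_{down}=S^{-1}I_{up}$, an element $u+I_{up}$ of $N_{up}/I_{up}$ lies in $\ker\Phi$ exactly when $u\in I_{down}$; using that $R_{up}\hookrightarrow R_{down}$, this happens exactly when $su\in I_{up}$ for some $s\in S$. So it suffices to prove: if $v=u+I_{up}\in N_{up}/I_{up}$ and $sv=0$ in $M$ for some $s\in S$, then $v=0$.

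I would prove this by a local argument at associated primes. Since $R_{up}$ is Noetherian, if $v$ is a nonzero element of an $R_{up}$-module $M$, then $R_{up}v\cong R_{up}/\mathrm{Ann}(v)$ is a nonzero submodule of $M$, hence has an associated prime $P\in\mathrm{Ass}_{R_{up}}(M)$, and one checks directly that $v/1\neq 0$ in $M_P$. Therefore $v=0$ as soon as $v/1=0$ in $M_P$ for every $P\in\mathrm{Ass}(M)$. Now fix $v$ as above and $P\in\mathrm{Ass}(M)$, and distinguish two cases. If $P=\m$: a nonzero element $s\in S=\fld[\theta_{i,j}]\setminus\{0\}$ involves none of the variables $x_j$, so $s\notin\m$ and $s$ is a unit in $M_\m$; combined with $sv=0$, this gives $v/1=0$ in $M_\m$. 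If $P\neq\m$: here the hypothesis enters. The absence of embedded associated primes means every $P\in\mathrm{Ass}(M)$ is a minimal prime of $I_{up}$, and by Corollary~\ref{cor!irrelevantisaminimalprime} so is $\m$; since two distinct minimal primes of an ideal are incomparable, $\m\not\subseteq P$. Picking $x_j\notin P$, the element $x_j$ is a unit in $M_P$ while $x_jv=0$ because $v\in N_{up}/I_{up}$, so again $v/1=0$ in $M_P$.

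In both cases $v/1=0$ in $M_P$, so $v=0$, i.e.\ $u\in I_{up}$, which is the asserted injectivity of $\Phi$. I do not expect a genuine obstacle: the identification of $\ker\Phi$ with the $S$-torsion and the detection of $0$ via associated primes are routine, and the only step where the hypothesis is genuinely needed is the case $P\neq\m$ --- without it an embedded prime $P\supsetneq\m$ could contain $s$, and then neither $s$ nor any $x_j$ would be invertible in $M_P$ and the argument would break. The crux is thus just combining Corollary~\ref{cor!irrelevantisaminimalprime}, the no-embedded-primes hypothesis, and the fact that $N_{up}/I_{up}$ is annihilated by $\m$.
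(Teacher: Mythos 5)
Your proof is correct and is essentially the paper's argument in contrapositive form: both reduce to the observation that an element of the kernel has annihilator containing $(x_1,\dots,x_m,s)$ with $s\in S$, and that no associated prime of $R_{up}/I_{up}$ can contain this ideal, because $(x_1,\dots,x_m)$ is a minimal associated prime (Corollary~\ref{cor!irrelevantisaminimalprime}) and by hypothesis there are no embedded ones. The paper derives a contradiction from ``the annihilator of a nonzero element lies in some associated prime,'' while you check vanishing in each localization $M_P$ for $P\in\mathrm{Ass}(M)$ --- the same fact packaged differently, with no gap.
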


\begin{proof}   
	Suppose  $u \in  N_{up} \setminus I_{up}$ has the property that $\Phi (u + I_{up}) = 0$.
	Since,  by \cite[p.~39, Corollary~3.4 iii)]{AM},  $\Phi$ is a localization map
	there exists,  by \cite[p.~37]{AM}, nonzero $p \in   \fld[\theta_{i,j}]$
	such that  $p u \in  I_{up}$.  Hence  
	\[
	(x_1,  \dots  ,x_m,  p)  \subset   I_{up} : (u).
	\]
	By \cite[Proposition~3.4]{Eis} there exists an associated prime $q$ of  $R_{up} / I_{up}$ such that
	\[
	(x_1,  \dots  ,x_m,  p)  \subset   q.
	\]
	This is a contradiction, since by
	Corollary~\ref{cor!irrelevantisaminimalprime}   $(x_1,  \dots  ,x_m)$ is
	an associated prime  of $R_{up}/ I_{up}$ and, by assumption,
	$R_{up} / I_{up}$ has no embedded associated prime ideals.
\end{proof}

\begin{remark}   \label{rem!ekdiaskf}
	See  Examples~\ref{example!intwovariables} and 
	\ref{example!inthreevariables}  for two examples where $\Phi$ is not injective.
\end{remark}

\begin{corollary}   \label{cor!cohenmacaulayinjctivity} Assume 
	$\fld[x_1, \dots , x_m]/I$ is  Cohen-Macaulay.  Then we have that  the map 
	$\Phi :  N_{up} / I_{up}  \to  \Socle(\AR)$ is injective. 
\end{corollary}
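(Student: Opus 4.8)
The plan is to deduce this corollary from Proposition~\ref{prop!whatifnboembeddedprimes}, whose hypothesis is that $R_{up}/I_{up}$ has no embedded associated primes. So the entire task reduces to verifying that, under the Cohen--Macaulay assumption on $\fld[x_1,\dots,x_m]/I$, the ring $R_{up}/I_{up}$ is unmixed, i.e.\ has no embedded primes. Fortunately this is already essentially packaged: Corollary~\ref{cor!cohenmacaulaysubcase21} tells us that $R_{up}/I_{up}$ is Cohen--Macaulay. First I would invoke that corollary to record that $R_{up}/I_{up}$ is Cohen--Macaulay.

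Next I would recall the standard fact that a Cohen--Macaulay ring has no embedded associated primes — indeed, for a Cohen--Macaulay module (or ring that is a quotient of a regular ring, hence catenary), all associated primes are minimal, and moreover all have the same dimension equal to the dimension of the ring. This is exactly \cite[Theorem~3.1 a.]{Eis}, or one can cite the unmixedness theorem. So from Cohen--Macaulayness of $R_{up}/I_{up}$ we conclude immediately that it has no embedded associated prime ideals.

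With that in hand, the hypothesis of Proposition~\ref{prop!whatifnboembeddedprimes} is satisfied, and I would simply apply that proposition to conclude that $\Phi : N_{up}/I_{up} \to \Socle(\AR)$ is injective, which is the assertion. So the proof is a two-line chain: Corollary~\ref{cor!cohenmacaulaysubcase21} $\Rightarrow$ no embedded primes $\Rightarrow$ Proposition~\ref{prop!whatifnboembeddedprimes}.

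There is really no serious obstacle here; the only point requiring any care is the implication ``Cohen--Macaulay $\Rightarrow$ no embedded primes,'' and even that is completely standard once one notes that $R_{up}/I_{up}$, being a finitely generated algebra over a field, is a quotient of a regular (hence catenary, equidimensional) ring, so the Cohen--Macaulay property forces all associated primes to be minimal of maximal dimension. The genuinely substantive inputs — the dimension formula of Proposition~\ref{prop!dimformulaforrupjup} and the injectivity criterion of Proposition~\ref{prop!whatifnboembeddedprimes} — have already been established, so this corollary is purely a matter of assembling them.
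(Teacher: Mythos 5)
Your proposal is correct and follows exactly the paper's own argument: Corollary~\ref{cor!cohenmacaulaysubcase21} gives that $R_{up}/I_{up}$ is Cohen--Macaulay, the standard fact that Cohen--Macaulay rings have no embedded associated primes (the paper cites \cite[p.~58, Theorem~2.1.2]{BH} for this, which is a more apt reference than \cite[Theorem~3.1 a.]{Eis}) applies, and Proposition~\ref{prop!whatifnboembeddedprimes} then yields injectivity. No gaps.
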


\begin{proof}   
	By   Corollary~\ref{cor!cohenmacaulaysubcase21}  $R_{up} / I_{up}$ 
	is Cohen-Macaulay, hence by  \cite[p.~58, Theorem~2.1.2]{BH}
	it has no embedded associated prime ideals.   The result follows
	from   Proposition~\ref{prop!whatifnboembeddedprimes}.
\end{proof}

Since $\fld[\theta_{i,j}]$ is a subring of  $R_{up}$,  it is natural to consider,
for $b \geq 0$,   $(N_{up}/I_{up})_{(b,-)}$ also as a $\fld[\theta_{i,j}]$-module.

\begin{proposition}   \label{prop!cycliccase} 
	Assume  that  the   $R_{up}$-module $(N_{up}/I_{up})_{(e,-)}$ is cyclic
	and denote by $u$ a generator.    Then the multiplication map 
	\[
	s_u  :   \fld[\theta_{i,j}] \to (N_{up}/I_{up})_{(e,-)}   
	\]
	with 
	\[
	s_u  ( p(\theta_{i,j}) )  =   p(\theta_{i,j}) u
	\]
	for all $p(\theta_{i,j})  \in \fld[\theta_{i,j}]$,  is an isomorphism of $\fld[\theta_{i,j}]$-modules.
\end{proposition}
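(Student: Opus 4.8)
The plan is to show $s_u$ is injective and surjective. Surjectivity is immediate: by hypothesis $u$ generates $(N_{up}/I_{up})_{(e,-)}$ as an $R_{up}$-module, and since $x_i$ annihilates $N_{up}/I_{up}$, the $R_{up}$-module structure on $(N_{up}/I_{up})_{(e,-)}$ factors through $R_{up}/(x_1,\dots,x_m) \cong \fld[\theta_{i,j}]$. Hence the $R_{up}$-submodule generated by $u$ coincides with the $\fld[\theta_{i,j}]$-submodule generated by $u$, which is all of $(N_{up}/I_{up})_{(e,-)}$; so $s_u$ is onto.

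For injectivity, suppose $p(\theta_{i,j}) u \in I_{up}$ with $p \neq 0$ in $\fld[\theta_{i,j}]$; I must derive $u \in I_{up}$. Since $u \notin I_{up}$ would mean $\mathrm{Ann}_{R_{up}/I_{up}}(u+I_{up})$ contains both $p$ and all the $x_i$ (the latter because $u \in N_{up}$), the ideal $I_{up}:(u)$ contains $(x_1,\dots,x_m,p)$, and by \cite[Proposition~3.4]{Eis} this ideal lies in some associated prime $q$ of $R_{up}/I_{up}$. By Corollary~\ref{cor!irrelevantisaminimalprime}, $(x_1,\dots,x_m)$ is a \emph{minimal} associated prime; since $q \supseteq (x_1,\dots,x_m)$ and $(x_1,\dots,x_m)$ is minimal among primes containing $I_{up}$, we must have $q = (x_1,\dots,x_m)$. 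But then $p \in q = (x_1,\dots,x_m)$, forcing $p = 0$ (as $p$ involves only the $\theta$-variables), a contradiction. Hence $u \in I_{up}$, i.e.\ $u + I_{up} = 0$, so $s_u$ is injective. This argument runs exactly along the lines of the proof of Proposition~\ref{prop!whatifnboembeddedprimes}, except that here we do not need the no-embedded-primes hypothesis: the specific element $u$ we are testing lives in degree $(e,-)$ and is a generator, and the key point is that any associated prime containing $(x_1,\dots,x_m)$ is forced to equal it by minimality, independently of whether other embedded primes exist.

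The one subtlety I expect to need care with is the reduction of the $R_{up}$-action to a $\fld[\theta_{i,j}]$-action in the surjectivity step: one must check that $\fld[\theta_{i,j}] \hookrightarrow R_{up} \twoheadrightarrow R_{up}/(x_1,\dots,x_m)$ is an isomorphism, which is clear since $R_{up} = \fld[x_1,\dots,x_m,\theta_{i,j}]$ and we are quotienting by the $x$-variables. Everything else is bookkeeping with degrees — note that $s_u$ is a degree-$(e,\cdot)$-preserving map in the appropriate sense and that $(N_{up}/I_{up})_{(e,-)}$ is indeed an $R_{up}$-submodule as remarked before Corollary~\ref{corol!relation758753}, so the statement makes sense. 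I would write this up in two short paragraphs: one disposing of surjectivity, one for injectivity via the associated-prime argument above.
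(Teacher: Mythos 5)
Your surjectivity argument is fine and is exactly the paper's. The injectivity argument, however, has a genuine gap at the step where you claim that any associated prime $q$ of $R_{up}/I_{up}$ with $q \supseteq (x_1,\dots,x_m)$ must equal $(x_1,\dots,x_m)$ ``by minimality, independently of whether other embedded primes exist.'' Minimality of $(x_1,\dots,x_m)$ over $I_{up}$ only rules out primes of $I_{up}$ that are strictly \emph{smaller}; it says nothing about associated primes strictly \emph{larger} than $(x_1,\dots,x_m)$ -- those are precisely the embedded primes. This is exactly why Proposition~\ref{prop!whatifnboembeddedprimes} carries the no-embedded-primes hypothesis, and Examples~\ref{example!intwovariables} and~\ref{example!inthreevariables} show that without it the conclusion of your argument genuinely fails: there $\Phi$ is not injective, so there do exist nonzero $p\in\fld[\theta_{i,j}]$ and $u'\notin I_{up}$ with $(x_1,\dots,x_m,p)\subseteq I_{up}:(u')$, forcing an embedded associated prime strictly above $(x_1,\dots,x_m)$. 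A secondary problem: even granting your chain of implications, you end with ``hence $u\in I_{up}$, so $s_u$ is injective,'' which is a non sequitur -- if $u\in I_{up}$ then $s_u$ is the zero map and is \emph{not} injective; you would separately need $(N_{up}/I_{up})_{(e,-)}\neq 0$ (which holds by Remark~\ref{remew!relation758753}, but you never invoke it).

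The real reason the proposition holds without any hypothesis on embedded primes is that injectivity must be proved using the special role of the top degree $e$, which your argument never uses. The paper passes to the generic Artinian reduction: by Corollary~\ref{corol!relation758753}, $\Phi(u)$ generates $(\Socle(\AR))_e=\AR_e$ over $E$, and $\AR_e\neq 0$ since $e$ is the top degree; hence $\AR_e$ is one-dimensional with basis $\Phi(u)$. If $p(\theta_{i,j})u\in I_{up}$ then $p(\theta_{i,j})\Phi(u)=0$ in the $E$-vector space $\AR_e$, and since a nonzero $p\in\fld[\theta_{i,j}]$ is invertible in $E=\fld(\theta_{i,j})$, this forces $p=0$. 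You should replace your associated-prime paragraph with this localization argument.
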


\begin{proof}   
	It is clear that $s_u$ is a homomorphism of $\fld[\theta_{i,j}]$-modules. 
	Since  $x_j (N_{up}/I_{up}) = 0$ for all $1 \leq j \leq m$,  the map  $s_u$ is surjective
	by the assumption  that  $u$ generates   $(N_{up}/I_{up})_{(e,-)}$ 
	as an $R_{up}$-module.
	
	Assume    $p(\theta_{i,j})  \in \fld[\theta_{i,j}] $ is an element in the kernel $s_u$.   
	By  Corollary~\ref{corol!relation758753},  we have that
	$\AR_e$ is a $1$-dimensional vector space over $E$ 
	and  $\Phi(u)$ is an $E$-basis of $\AR_{e}$.  Hence, $s_u (p(\theta_{i,j}))= 0$
	implies  $p(\theta_{i,j}) \Phi(u) = 0$ which imples 
	$p(\theta_{i,j}) = 0$.  Therefore,  $s_u$ is also injective.
\end{proof}

\subsection{Examples and Questions}  \label{subs!examplesfornup} 

In this subsection we discuss a number of examples and  questions about
the generic polynomial reduction.   Related 	Macaulay2~\cite{GS} code
is contained in the  file \ \href{http://users.uoi.gr/spapadak/volumem2codev1.txt}
{http://users.uoi.gr/spapadak/volumem2codev1.txt}.

\begin{example}     \label{examplgorensteincase}
	Assume $\fld$ is any field  and  $\fld[x_1, \dots ,  x_n]/I$ is Gorenstein.
	Recall that  $e$ denotes  the top degree  of  the generic Artinian reduction  $\AR$ 
	of $\fld[x_1, \dots ,  x_n]/I$. We will prove  in Section~\ref{sec!gorensteinpolynomialreduction} 
	that the $R_{up}$-module $N_{up}/I_{up}$ is cyclic and 
	generated by an element   $u \in (N_{up}/I_{up})_{(e,-)}$.  
	For an explicit computation of  $u$ in the case where  $\fld[x_1, \dots ,  x_n]/I$
	is the face ring of a simplicial sphere, see  
	Section~\ref{sec!applicationtosimplicialspheres}.
\end{example}

\begin{example}     \label{example!intwovariables}
	Assume $\fld$ is any field and $I= (x_1^5,x_1x_2) \subset \fld[x_1,x_2]$.  Then, 
	Macaulay2  computations
	suggest  the following:  The generic Artinian reduction  $\AR$  of $\fld[x_1,x_2]/I$  has Hilbert function $(1,1,0,0, \dots)$,
	hence the top degree of  $\AR$  is $1$.  The $R_{up}$-module $N_{up}/I_{up}$ is minimally generated 
	by  its subset 
	\[   
	\{ \; \theta_{1,2}x_2 +I_{up} ,  \; \;   \; \;  x_1^4 + I_{up} \; \}.
	\]
	This implies that  $(N_{up}/I_{up})_{(1,-)}$ is a cyclic $R_{up}$-module  with generator 
	$\theta_{1,2}x_2 +I_{up}$ and that  $\Phi$  is not injective, since $x_1^4 + I_{up}$ is a nonzero
	element in the kernel.
\end{example}

\begin{example}     \label{example!inthreevariables}
	Assume $\fld$ is any field and $I= (x_1^5,x_1x_2,x_1x_3) \subset \fld[x_1, x_2 , x_3]$. 
	Macaulay2 computations suggest  the following:  The generic Artinian reduction 
	$\AR$ of $\fld[x_1, x_2, x_3]/I$  has Hilbert function $(1,1,0,0, \dots)$,
	hence the top degree of $\AR$ is $1$.  The $R_{up}$-module $N_{up}/I_{up}$ is minimally generated 
	by  its subset    $\{h_1+I_{up},h_2+I_{up},h_3+I_{up}\}$,   where
	\[
	h_1 =  \theta_{1,2}x_2+\theta_{1,3}x_3,  \quad    \;    h_2 =  \theta_{2,2}x_2+\theta_{2,3}x_3, 
	\quad \;  h_3 =   x_1^4.
	\]
	It holds $\theta_{2,1}(h_1+I_{up}) - \theta_{1,1}(h_2+I_{up}) = 0$  and  $\Phi$ restricted to $(N_{up}/I_{up})_{(1,-)}$ is injective.
	Therefore,    $(N_{up}/I_{up})_{(1,-)}$ is not a cyclic $R_{up}$-module 
	and  $\Phi$  is not injective, since $h_3 + I_{up}$ is a nonzero element in the kernel.
\end{example}

\begin{example}     \label{example!torustriangulation}
	Assume $\fld$ is any field and  $D$ is the simplicial complex triangulating the 
	torus $S^1 \times S^1$  described  in    \cite[p.~70, Exerc.~5.1.8]{Sch}.
	We denote by $I \subset \fld[x_1, \dots , x_{10}]$ the Stanley-Reisner ideal
	of $D$.  We remark that by Reisner's criterion
	(\cite[p.~235, Corollary~5.3.9]{BH})    $\fld[x_1, \dots , x_{10}]/I$   is not
	Cohen-Macaulay.     
	Macaulay2 computations suggest  the following:  The generic Artinian reduction 
	$\AR$ of $\fld[x_1, \dots ,x_{10}]/I$  has Hilbert function   $(1,7,13,1,0,0,  \dots)$,
	while the socle of $\AR$ has Hilbert function   $(0,0,6,1,0,0,  \dots)$.
	The $R_{up}$-module $N_{up}/I_{up}$ is minimally generated by $7$ homogeneous
	elements $u_1,  \dots , u_7$  such that  $u_i \in (N_{up}/I_{up})_{(2,-)}$ 
	for $1 \leq i \leq 6$ and   
	\[
	u_7 =   [8,9,10]  x_8x_9x_{10} + I_{up}  \;   \in    (N_{up}/I_{up})_{(3,-)}.
	\]
	Here  $[8,9,10]$  denotes the determinant of the  $3 \times 3$ submatrix 
	of the $3 \times 10$ matrix $[\theta_{i,j}]$ obtained by keeping columns $8,9$ and $10$.
	Moreover,  we have that  
	\[
	\Phi (u_1),  \dots ,   \Phi (u_7)
	\]
	is an $E$-basis of $\Socle(\AR)$.  We remark 
	that (up to sign) in  the paper  \cite{APP}  the volume normalization on $\AR$,
	which was denoted there by $\deg$,
	was defined by the condition that   $\Phi(u_7)$ maps to $1$. 
\end{example}

\begin{example}     \label{quest!examplegenerichypersurface}
	Assume that  $\fld$ is any field, $\; R= \fld[ x_1,x_2, g_{1,1}, g_{1,2},g_{2,2}] \;$
	and
	\[
	I = (g)  \subset R,
	\]
	where
	\[
	g=  g_{1,1}x_1^2 +  g_{1,2}x_1x_2 + g_{2,2}x_2^2
	\]
	is the generic degree $2$ hypersurface in $2$ variables $x_i$.
	We write  $g= p_1 x_1 + p_2 x_2$,  for some (non-unique)  $p_1,p_2 \in R$ 
	and set 
	\[
	u =   ( - p_1 \theta_{1,2} + p_2 \theta_{1,1} ) \in R_{up}.
	\]
	Then, by Subsection~\ref{subsec!completeintersectioncase}  below we have that
       $(I_{up} : (x_1,x_2)) /I_{up}$ is a cyclic 
	$R_{up}$-module generated by  $u+ I_{up}$.
	We denote by  $\vo : \AR_1 \to E$ the volume normalization linear isomorphism
	uniquely specified by the condition that $u+I_{down}$ maps to $1$. 
	In the special case where the field $\fld$ has characteristic~$2$,  one can show
	the {\it Parseval Equations}
	\begin{equation}   \label{eqn!parsevalno1}
		\vo  (x_1) = g_{1,1} \theta_{1,2}  (\vo (x_1))^2 + g_{1,2} \theta_{1,1} (\vo (x_1))^2 +
		g_{2,2} \theta_{1,2} (\vo(x_2))^2
	\end{equation}
	and
	\begin{equation}   \label{eqn!parsevalno2}
		\vo  (x_2) = g_{1,1} \theta_{1,1}   (\vo  (x_1))^2 + g_{1,2}\theta_{1,2}  (\vo   (x_2))^2 +
		g_{2,2} \theta_{1,1} (\vo   (x_2))^2,
	\end{equation}        
	where, for $1 \leq i \leq 2$,  by abuse of notation we denote  $\vo (x_i + I_{down})$ by  $\vo  (x_i)$.
	It is interesting to compare  Equations~(\ref{eqn!parsevalno1}) and (\ref{eqn!parsevalno2})
	with the Parseval Equations obtained in the reference
	\cite[Section~5]{APPS}  and the Differential Identities conjectured in
	\cite[Section~14]{PP} and proven in  \cite[Section~4]{KX}.
\end{example}

\begin{question}     \label{quest!question1}
	The strongest one can hope is that  
	\[
	\Phi(h_1+I_{up}),  \dots  ,  \Phi(h_s+I_{up})
	\] 
	is an $E$-basis of $\Socle(\AR)$.  This happens, by  the results in 
	Section~\ref{sec!gorensteinpolynomialreduction}, 
	if $\fld[x_1, \dots , x_m]/I$  is Gorenstein.  
	It also happens in the torus triangulation 
	Example~\ref{example!torustriangulation}.  However, it fails
	in  Examples~\ref{example!intwovariables} and~\ref{example!inthreevariables}.
	We believe it will be interesting to investigate under which conditions
	on $I$ it holds.
\end{question}

\begin{question}     \label{quest!question2}
	Another interesting question is under which conditions on $I$ 
	the map $\Phi:   N_{up} / I_{up}  \to  \Socle(\AR)$ is injective. 
	By  Corollary~\ref{cor!cohenmacaulayinjctivity} this is the case
	if $\fld[x_1, \dots , x_m]/I$ is  Cohen-Macaulay.  It also  happens in 
	the non-Cohen-Macaulay torus triangulation 
	Example~ \ref{example!torustriangulation},  but it fails
	in Examples~\ref{example!intwovariables} and~\ref{example!inthreevariables}.
\end{question}

\begin{question}     \label{quest!question3}
	Assume that  the dimension of the top degree $\AR_e$
	as $E$-vector space is $1$.  
	We believe it will be worth to investigate under which conditions the 
	$R_{up}$-module $(N_{up}/I_{up})_{(e,-)}$ is cyclic.
	This is the case,  by  the results in 
	Section~\ref{sec!gorensteinpolynomialreduction}, if 
	$\fld[x_1, \dots , x_m]/I$ is Gorenstein,  in the torus triangulation 
	Example~ \ref{example!torustriangulation} and in  
	Example~\ref{example!intwovariables}. However, it fails in an 
	interesting way  in  Example~\ref{example!inthreevariables}.
\end{question}

\subsection{Proof of Proposition~\ref{prop!dimformulaforrupjup}}  \label{subs!dimensionformulaforrupjup} 

\begin{proposition}   \label{prop!gb85i3k9} 
	Assume $R$ is a commutative ring of finite Krull dimension,  
	$I$ is a proper  ideal of $R$,    $s \geq 1$   and   $ f_{(i,1)}, f_{(i,2)}   \in R$,  
	for  $1 \leq i  \leq s$.        We  set,  for  $1 \leq i \leq s$,
	\[
	g_i = f_{(i,1)}  f_{(i,2)}.
	\] 
	We asssume  that   for any sequence  $j_1,   \dots  ,  j_s$, 
	with    $j_t  \in \{1, 2 \}$   for all $t$,   we have 
	\[ 
	\dim R/(I, f_{(1,j_1)},f_{(2,j_2)} ,  \dots  , f_{(s,j_s)} ) = \dim R/I - s.
	\]
	Then, 
	\[
	\dim R/(I, g_1,  \dots ,  g_s )  =  \dim R/I - s.
	\]     
\end{proposition}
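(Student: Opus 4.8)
The plan is to prove the statement by induction on $s$, reducing to the case $s=1$, and to handle the case $s=1$ by the standard fact that over a Noetherian ring the dimension of a quotient equals the maximum of the dimensions of the quotients by the minimal primes, and that $\dim R/(J,g)$ is governed by the primes of $R/J$ containing $g$. (One should note at the outset that $R$ may be assumed Noetherian: in the intended application $R=R_{up}$ is a polynomial ring over a field, and in any case the hypothesis ``$\dim R/I-s$'' only makes sense with the usual dimension theory; alternatively the argument below works with ``$\dim$'' read as Krull dimension and ``associated/minimal prime'' replaced by ``minimal prime over'' throughout.)

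\textbf{The case $s=1$.} Here $g_1=f_{(1,1)}f_{(1,2)}$, and we are given that $\dim R/(I,f_{(1,j)})=\dim R/I-1$ for $j=1,2$; in particular both $f_{(1,1)}$ and $f_{(1,2)}$ are non-units in $R/I$ and avoid every minimal prime of $R/I$ of maximal dimension (otherwise the dimension would not drop). Now let $\mathfrak{p}$ be a minimal prime of $(I,g_1)$ with $\dim R/\mathfrak{p}=\dim R/(I,g_1)$. Then $\mathfrak{p}\supseteq I$ and $g_1=f_{(1,1)}f_{(1,2)}\in\mathfrak{p}$, so $\mathfrak{p}$ contains $f_{(1,1)}$ or $f_{(1,2)}$, say $f_{(1,1)}\in\mathfrak{p}$; hence $\mathfrak{p}\supseteq(I,f_{(1,1)})$ and so $\dim R/(I,g_1)=\dim R/\mathfrak{p}\le\dim R/(I,f_{(1,1)})=\dim R/I-1$. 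For the reverse inequality, $(I,g_1)\subseteq(I,f_{(1,1)})$ forces $\dim R/(I,g_1)\ge\dim R/(I,f_{(1,1)})=\dim R/I-1$. This settles $s=1$.

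\textbf{The inductive step.} Assume the result for $s-1$. Apply the $s=1$ case with $I$ replaced by the ideal $J=(I,g_1,\dots,g_{s-1})$ and with the single pair $f_{(s,1)},f_{(s,2)}$: to invoke it we must check that $\dim R/(J,f_{(s,j)})=\dim R/J-1$ for $j=1,2$. But $(J,f_{(s,j)})=(I,g_1,\dots,g_{s-1},f_{(s,j)})$, and by the inductive hypothesis applied to the ring $R/(I,f_{(s,j)})$ — more precisely, to the ideal $I'=(I,f_{(s,j)})$ and the $s-1$ pairs $f_{(1,\cdot)},\dots,f_{(s-1,\cdot)}$, whose codimension-drop hypothesis is exactly the $2^{s-1}$ hypotheses of the proposition in which the $s$-th slot is pinned to $j$ — we get $\dim R/(I,f_{(s,j)},g_1,\dots,g_{s-1})=\dim R/(I,f_{(s,j)})-(s-1)=(\dim R/I-1)-(s-1)$. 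On the other hand the inductive hypothesis applied directly to $I$ and the first $s-1$ pairs gives $\dim R/J=\dim R/I-(s-1)$. Combining, $\dim R/(J,f_{(s,j)})=\dim R/J-1$ for $j=1,2$, so the $s=1$ case yields $\dim R/(J,g_s)=\dim R/J-1=\dim R/I-s$, which is the claim since $(J,g_s)=(I,g_1,\dots,g_s)$.

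\textbf{Main obstacle.} The only genuine subtlety is bookkeeping: one must apply the inductive hypothesis both ``directly'' (to $I$) and ``with one coordinate frozen'' (to $(I,f_{(s,j)})$), and check that the frozen version's hypotheses are among the $2^s$ hypotheses we are handed. This is purely formal but needs to be spelled out carefully; no commutative-algebra input beyond the minimal-prime description of dimension and the trivial monotonicity $\dim R/(I,g)\ge\dim R/(I,f)$ when $(g)\subseteq(f)\bmod I$ is required. I would present the $s=1$ case in full and then organise the induction exactly as above.
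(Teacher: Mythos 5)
Your base case $s=1$ is correct and is, in essence, the paper's entire argument. The problem is the inductive superstructure you build on top of it: both places where you invoke the inductive hypothesis require dimension statements that are \emph{not} among the $2^s$ hypotheses you are handed. Concretely, to apply the proposition for $s-1$ to $I$ and the first $s-1$ pairs you need $\dim R/(I,f_{(1,j_1)},\dots,f_{(s-1,j_{s-1})})=\dim R/I-(s-1)$ for all sequences, and to apply it to $I'=(I,f_{(s,j)})$ you need its hypotheses to read $\dim R/I'-(s-1)$, i.e.\ you need $\dim R/(I,f_{(s,j)})=\dim R/I-1$. Neither statement is given: the hypotheses only control the dimension after \emph{all} $s$ elements are adjoined, and a priori $\dim R/(I,f_{(s,j)})$ could equal $\dim R/I$ (the drop happening entirely in the other slots). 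So the claim that the frozen hypotheses are ``exactly the $2^{s-1}$ hypotheses with the $s$-th slot pinned to $j$'' is false as stated; the left-hand sides match but the right-hand sides differ by the unverified quantity $\dim R/I'-(\dim R/I-1)$. Filling this in requires Krull's height theorem (each added element drops the dimension by at most one), hence Noetherianity --- an assumption the proposition does not make and which you only gesture at parenthetically, never actually invoking the Hauptidealsatz where it is needed.

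There are two clean repairs. One is to strengthen the inductive statement to $\dim R/(I,g_1,\dots,g_s)=\max_{j_1,\dots,j_s}\dim R/(I,f_{(1,j_1)},\dots,f_{(s,j_s)})$; your $s=1$ argument actually proves this stronger form (upper bound from primality, lower bound from the inclusions $(I,g_1)\subseteq(I,f_{(1,j)})$), and with it the induction closes without any appeal to Noetherianity. The other, which is what the paper does, is to skip induction entirely: for any prime $\mathfrak{p}\supseteq(I,g_1,\dots,g_s)$, primality forces a choice $j_i$ with $f_{(i,j_i)}\in\mathfrak{p}$ for every $i$ simultaneously, giving $\dim R/\mathfrak{p}\le\dim R/I-s$ at once; the reverse inequality comes from $(I,g_1,\dots,g_s)\subseteq(I,f_{(1,1)},\dots,f_{(s,1)})$. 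That is your base case applied to all $s$ factors in one stroke, and it avoids the bookkeeping that is precisely where your write-up breaks down.
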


\begin{proof}
	By the definition of Krull dimension
	\[
	\dim  R/I = \sup \;  \{  \dim R/p  : p \in  \Spec(R) ,  I \subset p \}.
	\]
	Assume $p \in \Spec R$  with   $( I, g_1,  \dots ,  g_s )  \subset p$.  
	Since   $p$ is prime and  $g_i \in p$   for all $i$,   there exists a sequence
	$\; j_1,   \dots  ,  j_s \;$   
	with    $j_t  \in \{1, 2 \}$   for all $t$,   such that
	\[
	(I, f_{(1,j_1)},f_{(2,j_2)} ,  \dots  , f_{(s,j_s)} )   \subset p.
	\]
	
	By the assumptions
	\[
	\dim R / (I, f_{(1,j_1)},f_{(2,j_2)} ,  \dots  , f_{(s,j_s)}) = \dim R/I -s,
	\]
	hence       $ \dim  R/p    \leq     \dim R/I -s$,  
	which implies that
	\begin{equation}   \label{eqn!34kdialrf}
		\dim R/(I, g_1,  \dots ,  g_s )   \leq  \dim R/I - s.
	\end{equation} 
	
	Conversely,  there exists  $p  \in \Spec (R)$ with   
	\[
	(I, f_{(1,1)},f_{(2,1)} ,  \dots  , f_{(s,1)})  \subset p
	\]
	and        $\dim  R/p =  \dim R/I -s$.    
	Since    
	\[
	(I, g_1,  \dots ,  g_s )  \subset (I, f_{(1,1)},f_{(2,1)} ,  \dots  , f_{(s,1)})  \subset p
	\]
	we get
	\begin{equation}   \label{eqn!94aakkid9}
		\dim R/(I, g_1,  \dots ,  g_s )   \geq  \dim R/I - s            
	\end{equation} 
	Combining   Inequalities~(\ref{eqn!34kdialrf})  and (\ref{eqn!94aakkid9}) the result follows.   
\end{proof}

\begin{remark}   \label{rem!aboutfieldextensions}
	Assume  $\fld$ is a field,   $\fld  \subset F$  is a field extension 
	and $\AR$ is a finitely generated $\fld$-algebra.     Then,
	by    \cite[Tag 00P3]{sproj}
	\[
           	\dim ( \AR  \otimes_\fld   F  ) =  \dim \AR.
	\]
\end{remark} 

\vspace{10pt}

We now give the proof of Proposition~\ref{prop!dimformulaforrupjup}.

\vspace{10pt}

STEP 1.       Using Remark~\ref{rem!aboutfieldextensions}, 
by passing to an infinite field extension of $\fld$ 
(for example the field $\fld(t)$ of rational functions over $\fld$ in one variable)
it is enough to prove the proposition assuming that $\fld$  is infinite.

\vspace{10pt}

STEP 2.     Since   $R_{up}/(I)$   is isomorphic to  
\[
(\fld[x_1, \dots , x_m]  / I) [\theta_{i,j}:   1 \leq i \leq d,  1 \leq j \leq m   ]
\]
we have that
\[
\dim   R_{up}/(I)  =   d + d m.
\]
Hence, by \cite[p. 414, Proposition A.4]{BH}  we have   
\begin{equation}   \label{eqn!439k24h3}
	\dim  R_{up}/I_{up}  \geq    d + d m  - d  = dm.
\end{equation} 

\vspace{10pt}

STEP 3.    Assume  $c_{i,j} \in \fld$  are Zariski general and consider 
the ideal $Q$ of  $R_{up}$, where
\[
Q =   I_{up}  +   (  \theta_{i,j}  - c_{i,j} \theta_{i,1} :   1 \leq i \leq d,  2 \leq j \leq m ).
\]
It is clear that          $R_{up} / Q$    is isomorphic to  
\[
\fld[x_1, \dots , x_m , \theta_{i,1} :  1 \leq i \leq d ]/  ( I,  \theta_{1,1}q_1,  \dots , \theta_{d,1}q_d ),
\]
where,  for  $1 \leq i \leq d$,  we have         $q_i = x_1 +  \sum_{2 \leq j \leq m} c_{i,j} x_j$.           
Using that the $c_{i,j}$  are Zariski general and  \cite[p.~37, Theorem~1.5.17 (c)]{BH},
we get  from Proposition~\ref{prop!gb85i3k9} that
\[
\dim  R_{up}/Q  = d.
\]
Hence,  \cite[p. 414, Proposition A.4]{BH} implies that
\begin{equation}   \label{eqn!irkodlas32}
	\dim  R_{up}/ I_{up}  \leq    d+ (m-1)d = dm.
\end{equation}   
Proposition~\ref{prop!dimformulaforrupjup} follows by combining 
Inequalities~(\ref{eqn!439k24h3}) and (\ref{eqn!irkodlas32}).

\section{KM normalization}        \label{sec!dfnofdegreenormalizationiso}

We continue using the notations of Section~\ref{sec!gericpolynomialreduction}. Motivated by Proposition~\ref{prop!cycliccase},
Example~\ref{example!torustriangulation}, Example~\ref{quest!examplegenerichypersurface} and Remark~\ref{rem!simplicialspheresmotivationgexample},
we give the following definition of volume normalization.

\begin{definition} [KM normalization]    \label{quest!definitioninthecycliccase}
	Assume that  the 
	$R_{up}$-module $(N_{up}/I_{up})_{(e,-)}$ is cyclic,
	and denote by $u$ a generator.
	By  Corollary~\ref{corol!relation758753},  we have that
	$\AR_e$ is a $1$-dimensional vector space over $E$ 
	and  $\Phi(u)$ is an $E$-basis of $\AR_{e}$.
	We define the volume normalization  linear isomorphism $\vo = \vo_u : \AR_e \to E$  
	to be the unique $E$-linear map 
	with the property that $\Phi(u)$ maps to $1$.
\end{definition}

\begin{remark}    \label{rem!uniquenessuptoscalar}
	Assume  that  the   $R_{up}$-module $(N_{up}/I_{up})_{(e,-)}$ is cyclic,
	and denote by $u_1, u_2$ two generators.   Since  $R_{up}$ is bigraded 
	with minimum bidegree $(0,0)$ and $(R_{up})_{(0,0)} = \fld$, it follows that there
	exist $c \in \fld \setminus \{0\}$  such that  $u_2 = c u_1$.  Hence,  
	$\vo_{u_1} = c \vo_{u_2}$.  
\end{remark}

\begin{remark}    \label{rem!aklertiokupa}
	The assumption that   the 
	$R_{up}$-module $(N_{up}/I_{up})_{(e,-)}$ is cyclic is satisfied
	in the important special case that $\fld[x_1, \dots , x_m]/I$ is Gorenstein.
	We study this case in more detail in Section~\ref{sec!gorensteinpolynomialreduction},
	where we relate it with the degenerate Kustin-Miller unprojection which is 
	introduced in   Section~\ref{sec!degeneratekmunprojection}.    
	Moreover, in the even more special case of complete intersections, we give
	explicit computations in Subsection~\ref{subsec!completeintersectioncase}
	and relate it to the theory of multidimensional residues.
\end{remark}

\begin{remark}    \label{rem!simplicialspheres}
	Assume  $\fld[x_1, \dots , x_m]/I$ is the face ring of 
	a simplicial sphere $D$.  In Section~\ref{sec!applicationtosimplicialspheres}
	we  give explicit computations of the volume normalization linear isomorphism 
	and we prove that it is exactly the isomorphism chosen usually in the case of toric varieties, 
and coincides with the one arising from de Rham cohomology. 
\end{remark}

\subsection{The viewpoint of Kustin-Miller unprojection}

Kustin-Miller unprojection is a technique originally invented to construct Gorenstein schemes from simpler ones, see \cite{KM} and \cite{PR}. In 
our setting, it has a new use. We now relate the volume normalization to Kustin-Miller unprojection, therefore justifying the 
name. We refer to p.~\pageref{app!appentixkmunprojection} for a survey of Kustin-Miller unprojection, and concern ourselves only briefly with providing two alternate viewpoints.

We start with a Gorenstein ring $S=\bigslant{\fld[\x]}{I},\ \x=(x_1,\cdots,x_m)$ over $\fld$ of Krull dimension $d$. As a next step, 
we introduce additional variables $\theta_{i,j}$, where $1 \leq j \leq m$  and $1 \leq i \leq d$. We then create new rings, 
by considering the polynomial ring $\fld[\x,\theta_{i,j},z]$ 
and the polynomials \[\theta_i=\sum_{j} \theta_{i,j} x_j.\]
We set
\[R\ = \ \bigslant{\fld[\x,\theta_{i,j},z]}{I+(\theta_i)}\]
and 
\[J\ =\ (z,x_1,\cdots,x_m)\]
The main observation is that we just constructed new Gorenstein rings.
\begin{proposition}
	The rings $R$ and $R/J$ are Gorenstein, and $J$ is of codimension one in $R$
\end{proposition}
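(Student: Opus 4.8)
The plan is to verify the three claims in turn: that $R$ is Gorenstein, that $R/J$ is Gorenstein, and that $J$ has codimension one in $R$. First I would observe that $R$ is a generic polynomial reduction in the sense of Section~\ref{sec!gericpolynomialreduction}, except with one extra polynomial variable $z$ adjoined; concretely, writing $R' = \fld[\x,\theta_{i,j}]/(I+(\theta_i)) = R_{up}/I_{up}$, we have $R \cong R'[z]$. Since $S = \fld[\x]/I$ is Gorenstein, it is in particular Cohen-Macaulay, so by Corollary~\ref{cor!cohenmacaulaysubcase21} the ring $R_{up}/I_{up}$ is Cohen-Macaulay; moreover it is the quotient of the Gorenstein ring $R_{up}/(I)$ (a polynomial extension of the Gorenstein ring $S$) by the regular sequence $f_1,\dots,f_d = \theta_1,\dots,\theta_d$ — regular because, by Proposition~\ref{prop!dimformulaforrupjup}, the dimension drops by exactly $d$, and a system of elements cutting dimension by its length in a Cohen-Macaulay ring is a regular sequence. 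A quotient of a Gorenstein ring by a regular sequence is Gorenstein, so $R_{up}/I_{up}$ is Gorenstein, and hence so is $R \cong (R_{up}/I_{up})[z]$, since adjoining a polynomial variable preserves the Gorenstein property.

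Next, for the codimension statement, note $J = (z,x_1,\dots,x_m)$ and $R/J \cong \fld[\theta_{i,j}]$, which has Krull dimension $dm$. Meanwhile $\dim R = \dim(R_{up}/I_{up}) + 1 = dm+1$ by Proposition~\ref{prop!dimformulaforrupjup}. Since $R$ is Cohen-Macaulay (being Gorenstein), it is equidimensional and catenary, so $\operatorname{codim} J = \dim R - \dim R/J = (dm+1) - dm = 1$, which is the claimed codimension-one statement.

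Finally, for $R/J$ Gorenstein: since $R$ is Cohen-Macaulay and $J$ has codimension one, it suffices to exhibit a single element $g \in J$ that is a nonzerodivisor on $R$ with $R/(g)$ having the same radical as $R/J$ along the relevant component — more simply, I would check that $R/J \cong \fld[\theta_{i,j}]$ is a polynomial ring, hence regular, hence certainly Gorenstein. This is immediate once one identifies $R/J$: killing $z$ and all $x_i$ in $R = \fld[\x,\theta_{i,j},z]/(I+(\theta_i))$ kills the relations $I$ and $(\theta_i)$ automatically (they lie in the ideal generated by the $x_j$), leaving exactly $\fld[\theta_{i,j}]$. So $R/J$ is regular, in particular Gorenstein.

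The main obstacle, and the only point requiring genuine care, is establishing that $R$ itself is Gorenstein — specifically, justifying that $\theta_1,\dots,\theta_d$ form a regular sequence on $R_{up}/(I)$. This is exactly where Proposition~\ref{prop!dimformulaforrupjup} (dimension equals $dm$) combines with the Cohen-Macaulayness of $R_{up}/(I)$ to force regularity; everything else (polynomial extensions, quotients by regular sequences, the identification of $R/J$) is routine. One should also confirm that the $\theta_i$ are honest nonzerodivisors at the first step, i.e.\ that $R_{up}/(I)$ is not merely Cohen-Macaulay but that the specific sequence cuts dimension maximally, which again is Proposition~\ref{prop!dimformulaforrupjup} together with \cite[Theorem~2.1.2]{BH} characterizing regular sequences in Cohen-Macaulay rings.
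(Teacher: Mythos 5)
Your argument is correct and follows essentially the same route the paper takes (the proposition is stated without proof in Section~3, but its content is exactly Proposition~\ref{prop!goodunprpair} applied to the pair $I_{up}\subset(x_1,\dots,x_m)$ together with the Section~5 observation that $R_{up}/I_{up}$ is Gorenstein because the $f_i$ cut the Cohen--Macaulay ring $R_{up}/(I)$ down by exactly $d$ dimensions, via Proposition~\ref{prop!dimformulaforrupjup}). Your identification of $R$ as $(R_{up}/I_{up})[z]$, of $R/J$ as the polynomial ring $\fld[\theta_{i,j}]$, and the dimension count $dm+1$ versus $dm$ all match the paper's reasoning.
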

 
Using the functor $\mathrm{Hom}_R (-, R)$ to the natural short exact sequence
\[0 \ \longrightarrow\  J \ \longrightarrow\  R\ \longrightarrow\ R/J \ \longrightarrow \ 0\]
we obtain 
\[0 \ \longrightarrow\  R \ \longrightarrow\  \mathrm{Hom}_R (J, R)\ \longrightarrow\ R/J \ \longrightarrow \ 0\]
with the last map corresponding to the Poincar\'e residue map of complex geometry. 

In other words, there is a homomorphism $\varphi:J\longrightarrow R$ that together with the inclusion $\iota:J\longhookrightarrow R$ generates the $R$-module $\mathrm{Hom}_R (J, R)$. Hence, we may find $h$ in $\fld[\x,\theta_{i,j}]$ with $\varphi(z)=h$.

\begin{definition}[KM normalization, the second]
	We set $\vo'(h)=1$.	
\end{definition}

The following is immediate.

\begin{proposition}
	This normalization is well-defined up to an element in $\fld^\ast$, and coincides with the previous definition up to a factor in $\fld^\ast$.
\end{proposition}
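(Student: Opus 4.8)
The plan is to deduce both assertions from the short exact sequence
\[
0 \ \longrightarrow\ R \ \xrightarrow{\ \iota\ }\ \mathrm{Hom}_R(J,R) \ \xrightarrow{\ \pi\ }\ R/J \ \longrightarrow\ 0
\]
displayed just above, together with one elementary computation: since $I$ is a proper homogeneous ideal of $\fld[\x]$ and each $\theta_i=\sum_j\theta_{i,j}x_j$ lies in $(x_1,\dots,x_m)$, we have $I+(\theta_i)\subseteq(x_1,\dots,x_m)$, hence $R/J=R/(z,x_1,\dots,x_m)\cong\fld[\theta_{i,j}]$ is a polynomial ring. Consequently the generators of the cyclic $R$-module $R/J$ are exactly its units, namely $\fld^\ast$. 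I also record for repeated use that $R=(R_{up}/I_{up})[z]$, so $z$ is a nonzerodivisor in $R$ and $R/(z)=R_{up}/I_{up}$.

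For well-definedness up to $\fld^\ast$, I would argue that a homomorphism $\varphi$ as in the definition is exactly a lift under $\pi$ of a generator of $R/J$; since any two generators of $R/J$ differ by a unit $\lambda\in\fld^\ast$, two admissible choices satisfy $\varphi'=\lambda\varphi+\iota(r)$ for some $\lambda\in\fld^\ast$, $r\in R$. Evaluating at $z$ gives $h'=\varphi'(z)=\lambda h+rz$; but the requirement $\varphi(z),\varphi'(z)\in\fld[\x,\theta_{i,j}]$ means these elements lie in the $z$-free summand $R_{up}/I_{up}$ of $R=(R_{up}/I_{up})[z]$, which meets $zR$ only in $0$, forcing $rz=0$ and hence $h'=\lambda h$ in $R$. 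Thus $h$, and a fortiori its class $\bar h$ in $R_{up}/I_{up}$ and its image $\Phi(\bar h)\in\AR_e$, is determined up to $\fld^\ast$, so $\vo'$ is well-defined up to $\fld^\ast$. Along the way I would note that $\bar h\neq 0$: otherwise $\varphi(z)\in zR$, and since $z$ is a nonzerodivisor the relations $z\varphi(x_j)=x_j\varphi(z)$ would force $\varphi=r_0\iota$ for some $r_0$, so that $\iota$ alone would generate $\mathrm{Hom}_R(J,R)$, contradicting $R/J\neq 0$.

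To identify $\vo'$ with the normalization of Definition~\ref{quest!definitioninthecycliccase}, I would first observe that from $x_jh=\varphi(x_jz)=z\varphi(x_j)\in zR$ we get $x_j\bar h=0$ in $R_{up}/I_{up}$ for all $j$, so $\bar h\in N_{up}/I_{up}$. Since $S=\fld[\x]/I$ is Gorenstein, hence Cohen--Macaulay, the map $\Phi\colon N_{up}/I_{up}\to\Socle(\AR)=\AR_e$ is injective by Corollary~\ref{cor!cohenmacaulayinjctivity}; as $\bar h\neq 0$ we get $\Phi(\bar h)\neq 0$, and since $\Phi$ respects degrees while $\Socle(\AR)$ is concentrated in degree $e$, the bihomogeneous element $\bar h$ must have $x$-degree $e$, i.e.\ $\bar h\in(N_{up}/I_{up})_{(e,-)}$. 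In the Gorenstein case this module is cyclic (cf.\ Remark~\ref{rem!aklertiokupa}), say with generator $u$, and by Proposition~\ref{prop!cycliccase} the multiplication map $s_u\colon\fld[\theta_{i,j}]\to(N_{up}/I_{up})_{(e,-)}$ is an isomorphism; writing $\bar h=s_u(p)=pu$ and comparing $\theta$-degrees forces $p\in\fld^\ast$, since by the grading conventions of Kustin--Miller unprojection $h$ is bihomogeneous of the same bidegree as $u$. Hence $\bar h=c\,u$ with $c\in\fld^\ast$, so $\vo'(\Phi(u))=c^{-1}\vo'(\Phi(\bar h))=c^{-1}$, whereas $\vo_u(\Phi(u))=1$; as $\AR_e$ is one-dimensional over $E$ this yields $\vo'=c^{-1}\vo_u$, which together with Remark~\ref{rem!uniquenessuptoscalar} gives the claimed coincidence up to $\fld^\ast$.

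The main obstacle is precisely the last step: upgrading ``$\bar h$ is a nonzero top-degree socle element'' to ``$\bar h$ is a generator of $(N_{up}/I_{up})_{(e,-)}$''. This is where one must fix the degree of the unprojection variable $z$ so that $\varphi$ is a homogeneous homomorphism of the correct degree, and then match the resulting bidegree of $h$ with that of the canonical generator $u$ produced in Section~\ref{sec!gorensteinpolynomialreduction}. Indeed, that section essentially constructs $u$ as the reduction $\bar h$ of the Kustin--Miller homomorphism evaluated at $z$, which is exactly what makes the present proposition immediate once that machinery is available; the role of the argument above is to make the passage from $\varphi$ to $\bar h$, and the $\fld^\ast$-ambiguities, explicit.
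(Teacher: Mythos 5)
Your first half --- well-definedness up to $\fld^\ast$ --- is correct and complete: the identification $R/J\cong\fld[\theta_{i,j}]$, the consequent relation $\varphi'=\lambda\varphi+\iota(r)$ with $\lambda\in\fld^\ast$, and the elimination of the $rz$ term via the decomposition $R=(R_{up}/I_{up})[z]$ all check out. This is a streamlined version of what Propositions~\ref{prop!msdfolxdgs} and~\ref{prop!tesldjkasd623} accomplish; the paper itself offers no written proof (``immediate''), deferring to the degenerate Kustin--Miller machinery of Sections~\ref{sec!degeneratekmunprojection} and~\ref{sec!gorensteinpolynomialreduction}, so your explicit argument here is a genuine service.

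The genuine gap is exactly where you locate it, and the repair you sketch does not close it. Knowing that $\bar h$ is a nonzero element of $(N_{up}/I_{up})_{(e,-)}$ is strictly weaker than knowing it is a generator: by Proposition~\ref{prop!cycliccase} this module is free of rank one over $\fld[\theta_{i,j}]$, so if $\bar h=p\,u$ with $p$ a nonconstant polynomial in the $\theta_{i,j}$, then $\vo'$ and $\vo_u$ differ by the factor $p$, which lies in $E\setminus\fld$, and the proposition would fail. Your proposed fix --- that $h$ is bihomogeneous of the same bidegree as $u$ ``by the grading conventions of Kustin--Miller unprojection'' --- is not available from the definition as stated, which nowhere requires $\varphi$ to be a graded homomorphism, and in any case would still require computing that bidegree. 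The correct way to finish is purely module-theoretic and needs no grading: by Proposition~\ref{prop!mapsdfetrewrw} one has $\varphi=T_h$ with $h\in(I_{up}:(x_1,\dots,x_m))$, and Remark~\ref{rem!converseofProposition} (i.e.\ the Nakayama-style computation inside the proof of Theorem~\ref{prop!rnaedgssd}: for arbitrary $w\in(I_{up}:(x_1,\dots,x_m))$ write $T_w=c_1T_h+c_2\,i$, evaluate at $z+I$, and substitute $z=0$) shows that $h+I_{up}$ generates all of $N_{up}/I_{up}$, which in the Gorenstein case equals $(N_{up}/I_{up})_{(e,-)}$. Remark~\ref{rem!uniquenessuptoscalar} then gives $\bar h=c\,u$ with $c\in\fld\setminus\{0\}$, completing the comparison.
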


\subsection{Once more}\label{ssc:oncemore}

We give a useful alternative view towards this: Consider the polynomial ring
$\fld[\x,\theta_{i,j}]$ and the quotient 
\[T\ =\ \bigslant{\fld[\x,\theta_{i,j}]}{I+(\theta_i)}\]
as well as the quotient
\[U\ =\ \bigslant{\fld[\x,\theta_{i,j}]}{(x_i)}.\]
We have a natural surjection
\[T\ \longrightarrow\ U\]
and hence a map of minimal free resolutions
\[\mathcal{F}_\bullet (T)\ \longrightarrow\ \mathcal{F}_\bullet (U)\]
Consider the top nontrivial entry of the resolution; it is
\[\mathcal{F}_d (T)\ \cong\ \mathcal{F}_d (U)\ \cong\ \fld[\x,\theta_{i,j}]\]
but the map of free resolutions induced
\[\varrho:\mathcal{F}_d (T)\ \longrightarrow\ \mathcal{F}_d (U) \]
is nontrivial: We consider the image $\varrho(1)$, a homogeneous polynomial in the latter, and set
\[\vo''(\varrho(1))=1\].
 From the general theory of Kustin-Miller unprojection (compare the
appendix on p.~\pageref{app!appentixkmunprojection}) we have:

\begin{theorem}
   The maps	$\vo$, $\vo'$ and $\vo''$ coincide (up to a factor in  $\fld^\ast$).
\end{theorem}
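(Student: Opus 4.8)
The assertion is that the three normalizations $\vo$, $\vo'$, $\vo''$ agree up to a unit in $\fld^\ast$. All three arise from the same underlying object, namely the Kustin--Miller unprojection datum associated to the codimension-one inclusion $J \hookrightarrow R$ (equivalently $T \twoheadrightarrow U$); the point is simply to track how each one names the distinguished element of $\fld[\x,\theta_{i,j}]$ and to see that these names coincide. The plan is to first establish the identification of $\vo'$ and $\vo''$ directly, then relate $\vo'$ back to $\vo$ via Definition~\ref{quest!definitioninthecycliccase} and the Gorenstein case treated in Section~\ref{sec!gorensteinpolynomialreduction}.

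\textbf{Step 1: $\vo'' = \vo'$ up to a unit.} Both are extracted from the map of minimal free resolutions $\mathcal{F}_\bullet(T) \to \mathcal{F}_\bullet(U)$ covering the surjection $T \to U$. On the one hand, $\vo''$ is defined by declaring $\varrho(1) \mapsto 1$, where $\varrho : \mathcal{F}_d(T) \cong \fld[\x,\theta_{i,j}] \to \mathcal{F}_d(U) \cong \fld[\x,\theta_{i,j}]$ is the top-degree component. On the other hand, the Kustin--Miller homomorphism $\varphi : J \to R$ — which determines $h = \varphi(z)$ and hence $\vo'$ — is, by the standard construction (see the appendix referenced on p.~\pageref{app!appentixkmunprojection}), obtained by dualizing exactly this comparison map: applying $\Hom_R(-,R)$, or equivalently $\Hom$ of the resolutions, the generator of $\Hom_R(J,R)/R\cdot\iota \cong R/J$ is read off from the top of the resolution. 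So $h$ and $\varrho(1)$ are the same polynomial up to a unit coming from the choice of bases of the rank-one free modules $\mathcal{F}_d$, which is an element of $(\fld[\x,\theta_{i,j}])_{(0,0)}^\ast = \fld^\ast$ since minimal free resolutions are unique up to isomorphism and the resolution can be taken bigraded. This is the step where one must be careful about exactly which module-theoretic incarnation of the unprojection one uses; I expect this bookkeeping — matching ``the last map is the Poincar\'e residue'' with ``$\varrho(1)$'' — to be the main obstacle, though it is purely a matter of unwinding the appendix's construction.

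\textbf{Step 2: $\vo' = \vo$ up to a unit.} Here we use that $S = \fld[\x]/I$ is Gorenstein, so by the results of Section~\ref{sec!gorensteinpolynomialreduction} (quoted in Example~\ref{examplgorensteincase} and Remark~\ref{rem!aklertiokupa}) the $R_{up}$-module $N_{up}/I_{up}$ is cyclic, generated by a single element $u \in (N_{up}/I_{up})_{(e,-)}$, so Definition~\ref{quest!definitioninthecycliccase} applies and $\vo$ is the unique $E$-linear map $\AR_e \to E$ sending $\Phi(u)$ to $1$. Now $N_{up} = I_{up} : (x_1,\dots,x_m)$ is, modulo $I_{up}$, the socle-type module that appears in the unprojection: the generator $h$ of $\Hom_R(J,R)$ modulo $\iota$, restricted through the identification of $R$ with (a polynomial extension in $z$ of) $R_{up}/I_{up}$, represents precisely a generator of $(N_{up}/I_{up})_{(e,-)}$. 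Indeed the connecting data of the short exact sequence $0 \to J \to R \to R/J \to 0$ under $\Hom_R(-,R)$ identifies $R/J \cong \Hom_R(R/J,R[1])$-type information with the socle of the Artinian reduction, which is exactly $\Socle(\AR) = N_{down}/I_{down}$ in top degree. Thus $h + I_{up}$ and $u$ differ by a unit in $\fld^\ast$ (again by Remark~\ref{rem!uniquenessuptoscalar}, two generators of a cyclic module over a connected graded ring differ by a nonzero scalar), and therefore $\vo_h = \vo_u$ up to $\fld^\ast$, where $\vo_h$ sends $\Phi(h+I_{up})$ to $1$. But the condition ``$\varphi(z) = h$, normalized to $1$'' defining $\vo'$ is, after passing to the Artinian reduction $\AR = R_{down}/I_{down}$ by inverting $S = \fld[\theta_{i,j}]\setminus\{0\}$, exactly the condition ``$\Phi(h + I_{up}) \mapsto 1$'' since $\Phi$ is the localization map; this uses Proposition~\ref{proposmapessentiallysurjective} to know $\Phi(h+I_{up})$ does generate $\AR_e$. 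Hence $\vo' = \vo_h = \vo_u = \vo$ up to a unit in $\fld^\ast$.

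\textbf{Conclusion.} Combining Steps 1 and 2, all three maps $\vo$, $\vo'$, $\vo''$ coincide up to multiplication by an element of $\fld^\ast$, which is the claim. The only genuinely substantive input beyond formal diagram-chasing is the dictionary between Kustin--Miller unprojection and the cyclic generator of $N_{up}/I_{up}$ in the Gorenstein case — precisely the content developed in Section~\ref{sec!gorensteinpolynomialreduction} — so once that is in hand the theorem is essentially a matter of observing that everyone is pointing at the same polynomial.
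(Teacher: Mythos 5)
Your proposal is correct and follows exactly the route the paper itself indicates: the paper offers no written proof beyond the citation ``from the general theory of Kustin--Miller unprojection (compare the appendix)'', and your two steps — reading the unprojection homomorphism $\varphi$ (hence $h=\varphi(z)$) off the top of the comparison map of minimal free resolutions, and then matching $h+I_{up}$ with the cyclic generator of $N_{up}/I_{up}=(I_{up}:(x_1,\dots,x_m))/I_{up}$ via Theorem~\ref{prop!rnaedgssd}, Remark~\ref{rem!converseofProposition} and Remark~\ref{rem!uniquenessuptoscalar} — are precisely the content the paper is deferring to. The only bookkeeping you leave implicit (passing between the resolutions over $\fld[\x,\theta_{i,j}]$ and over $\fld[\x,\theta_{i,j},z]$ by tensoring with the Koszul complex on $z$, and removing the $z$-component of $\varphi(z)$ via Proposition~\ref{prop!msdfolxdgs}) is likewise left implicit in the paper, so your write-up is at least as complete as the original.
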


\section{Degenerate Kustin-Miller unprojection}  \label{sec!degeneratekmunprojection}  

We refer the reader to the appendix on p.~\pageref{app!appentixkmunprojection} for a survey of Kustin-Miller unprojection. 
In the present section we introduce the notion of degenerate Kustin-Miller unprojection.
Our motivation is that in Section~\ref{sec!gorensteinpolynomialreduction} this notion  
will play a key role in the study of the volume normalization of  commutative Gorenstein algebras.
We remark that an example of degenerate Kustin-Miller unprojection appeared in 
\cite {BP1}.

Assume $R_{small}$ is a local Noetherian ring and $I_{small} \subset J_{small}$ are
two ideals of $R_{small}$ with the property that both quotient  $R_{small} /  I_{small}$
and   $R_{small} /  J_{small}$  are Gorenstein of the same Krull dimension.
We call the pair 
$I_{small} \subset J_{small}$ the {\it  predata for a degenerate Kustin-Miller  unprojection}.

We assume $z$ is a new variable and we set  $R = R_{small}[z]$.  We denote by
$I$ the ideal of $R$ generated by the set $I_{small}$, and by   
$J$ the ideal of $R$ generated by the set $J_{small} \cup \{z\}$.
We fix $h_1,  \dots , h_s \in R_{small}$ that generate the ideal $I_{small}$ of $ R_{small}$, 
clearly they also generate the ideal $I$ of $ R$.

\begin{proposition}  \label{prop!goodunprpair}  We have that  the pair $(J/I, R/I)$ satisfies the 
	Kustin-Miller unprojection assumptions defined in \cite{PR}
	(see also Subsection~\ref{subs!kmunproj433}).  In other words,  
	$R/I$ is  Gorenstein and $J/I$ is a codimension $1$ ideal of $R/I$
	with the quotient being Gorenstein.
\end{proposition}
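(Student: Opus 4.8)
The plan is to verify the three assertions contained in the statement — that $R/I$ is Gorenstein, that $J/I$ has codimension one in $R/I$, and that $R/J$ is Gorenstein — by first identifying $R/I$ and $R/J$ with explicit rings. Since $I = I_{small}R$ and $R = R_{small}[z]$, there is a natural isomorphism $R/I \cong (R_{small}/I_{small})[z]$, a polynomial ring in one variable over $R_{small}/I_{small}$. Since $J = J_{small}R + zR$, reduction modulo $z$ (i.e.\ evaluation at $z = 0$) gives $R/J \cong R_{small}/J_{small}$. From these two identifications the Gorenstein statements are immediate: $R/J \cong R_{small}/J_{small}$ is Gorenstein by the predata hypothesis, and $R/I$ is Gorenstein because a polynomial extension in one variable of a Gorenstein ring is again Gorenstein (see e.g.\ \cite{BH}). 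In particular both $R/I$ and $R/J$ are Cohen--Macaulay, hence have no embedded associated primes, and $R/J$, being a quotient of the local ring $R_{small}$, is equidimensional.

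For the codimension-one claim I would argue as follows. First, $R/J \cong R_{small}/J_{small} \neq 0$, so $J$ is a proper ideal of $R$. Next, the image of $z$ in $R/I \cong (R_{small}/I_{small})[z]$ is a nonzerodivisor (compare leading coefficients) and lies in $J$; since $R/I$ is Cohen--Macaulay, this already forces $J/I$ to have grade, hence height, at least one. For the reverse bound I would use the dimension count coming from the defining property of the predata: $\dim R/I = \dim R_{small}/I_{small} + 1$, while $\dim R/J = \dim R_{small}/J_{small} = \dim R_{small}/I_{small}$, so $\dim R/I = \dim\big((R/I)/(J/I)\big) + 1$; Cohen--Macaulayness of $R/I$ then converts this into the assertion that $J/I$ has height exactly $1$. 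Concretely, a minimal prime $\mathfrak p$ over $J/I$ necessarily contains $z$, and $\mathfrak p/(z)$ is a minimal prime of $(R/I)/(z) \cong R_{small}/I_{small}$ containing $J_{small}/I_{small}$, which has height $0$ because its quotient $R_{small}/J_{small}$ has the same dimension as $R_{small}/I_{small}$ and the latter is Cohen--Macaulay local; as $z$ is a nonzerodivisor this pins down the height of $\mathfrak p$ to be $1$. Combined with the Cohen--Macaulayness of $R/J$, this also shows $J/I$ is unmixed, and therefore the pair $(J/I, R/I)$ meets the Kustin--Miller unprojection hypotheses of \cite{PR} recalled in Subsection~\ref{subs!kmunproj433}.

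The one genuinely delicate point is this codimension statement, precisely because $R$ — and hence $R/I$ — is not local, so one cannot invoke the identity relating height, dimension and dimension of the quotient without justification; the nonzerodivisor $z$ is exactly what bridges the gap, and this reflects the feature of the \emph{degenerate} situation that $J_{small}/I_{small}$ itself has height zero, the adjunction of $z$ being what restores codimension one. I would also cross-check the precise list of hypotheses in \cite{PR} and Subsection~\ref{subs!kmunproj433} to confirm that nothing further (such as absence of embedded primes or unmixedness of $J/I$) is required beyond what is asserted here — and each such condition follows from the Cohen--Macaulayness established in the first step.
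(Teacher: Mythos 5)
Your proposal is correct and follows essentially the same route as the paper: both identify $R/I\cong (R_{small}/I_{small})[z]$ and $(R/I)/(J/I)\cong R_{small}/J_{small}$, and read off the two Gorenstein claims and the dimension drop from these isomorphisms. The extra analysis you carry out for the codimension statement (the nonzerodivisor $z$, heights of minimal primes over $J/I$, unmixedness) is more than the paper requires, since the Kustin--Miller hypotheses as recalled in Subsection~\ref{subs!kmunproj433} are phrased via $\dim R/I - \dim R/J = 1$, which the dimension count $d+1$ versus $d$ already delivers.
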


\begin{proof}
	We denote by $d$ the common Krull dimension
	of $R_{small} /  I_{small}$ and   $R_{small} /  J_{small}$.
	Since $I$ is generated as an ideal of $R$ by the elements $h_i$ of $R_{small}$
	it follows that  $R/I$ is isomorphic to the polynomial ring  $(R_{small} / I_{small}) [z]$.
	This implies that   $R/I$  is  Gorenstein  (since $R_{small} / I_{small}$ is) 
	of Krull dimension $d+1$.  The quotient $(R/I)/(J/I)$ is isomorphic
	to $R_{small} /  J_{small}$ which is Gorenstein of Krull dimension $d$.
\end{proof}

We call the pair $(J/I, R/I)$ 
a {\it degenerate Kustin-Miller  unprojection pair}.
Combining Proposition~\ref{prop!goodunprpair} with  \cite[p.~563]{PR} 
(compare also  Subsection~\ref{subs!kmunproj433})  there 
exists a short exact sequence 
\begin{equation}  \label{eqn!sesforunprno1}
	0\rightarrow \  {R/I}\rightarrow \operatorname{Hom}_{R/I}(J/I, {R/I})
	\rightarrow {R/J}  \rightarrow 0  
\end{equation}
where the first nonzero map sends  $u \in R/I$ to the map $J/I \to {R/I}$ which
is multiplication by $u$ and the second nonzero map corresponds to 
the Poincar\'e residue map.

Recall that if $R$ is a  commutative ring with unit, the subset
$S_R$ of $R$ consisting of all $a \in R$ such that the multiplication by $a$ map
$R \to R$ is injective, is a multiplicatively closed subset of $R$ and the
natural map $R$ to the localization $ (S_R)^{-1}R$, with
$a \mapsto a/1$ is injective. The ring $ (S_R)^{-1}R$ is called the total ring
of fractions of $R$.

We denote by $Q(R/I)$ the total ring of fractions of $R/I$.  Since
the variable $z$ does not appear in the
generating set  $I_{small}$  of $I$, we have that the multiplication by $z$ map
$R/I \to R/I$ is  injective. Consequently,  $z$ is an invertible element of $Q(R/I)$.

\begin{proposition}  \label{prop!mapsendsJinside23}   Assume 
	$ u  \in (  I_{small} :  J_{small}) $.   Then,  the  mutliplication by $u/z$
	map $Q(R/I) \to Q(R/I)$ sends the subset $J/I$ inside  $R/I \subset Q(R/I)$.
	Moreover,   $(u/z) (w+I) = 0$  for all $w \in  J_{small}$.
\end{proposition}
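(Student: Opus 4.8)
The plan is to reduce the claim to a computation on a set of ideal generators of $J/I$. Recall that, by construction, $J$ is the ideal of $R = R_{small}[z]$ generated by $J_{small} \cup \{z\}$, and since $I_{small} \subseteq J_{small}$ the ideal $I$ is contained in $J$; hence $J/I$ is the ideal of $R/I$ generated by the residue class $z + I$ together with the classes $w + I$ for $w$ ranging over (a generating set of) $J_{small}$. As recorded just before the statement, multiplication by $z$ on $R/I$ is injective because $z$ does not occur among the chosen generators of $I$, so $z$ is a unit in $Q(R/I)$ and $u/z$ is a legitimate element of $Q(R/I)$; consequently multiplication by $u/z$ is an $R/I$-linear endomorphism of $Q(R/I)$. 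Since $R/I$ is an $R/I$-submodule of $Q(R/I)$ and every element of $J/I$ is an $R/I$-linear combination of the generators above, it is enough to check that multiplication by $u/z$ carries each of these generators into $R/I$.

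I would then carry out the two cases. First, $(u/z)(z + I) = u + I$, which lies in $R/I \subset Q(R/I)$. Second, for $w \in J_{small}$ we have $(u/z)(w + I) = (uw + I)/(z + I)$ in $Q(R/I)$; but the hypothesis $u \in (I_{small} : J_{small})$ gives $uw \in I_{small} \subseteq I$, so $uw + I = 0$ in $R/I$ and therefore $(u/z)(w + I) = 0$. The first case together with the second, applied along all $w \in J_{small}$, shows that multiplication by $u/z$ sends $J/I$ inside $R/I$; and the second case is precisely the additional assertion that $(u/z)(w + I) = 0$ for all $w \in J_{small}$.

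I do not expect a genuine obstacle here: the only delicate points are making sure that $u/z$ really makes sense in the total ring of fractions — which rests on the already-established fact that $z$ is a non-zerodivisor on $R/I$ — and correctly identifying the ideal generators of $J/I$ so that $R/I$-linearity lets us reduce to them. If one prefers to avoid mentioning generators, one can instead write a general element of $J$ in the form $bz + \sum_i a_i w_i$ with $b, a_i \in R$ and $w_i \in J_{small}$, and apply multiplication by $u/z$ term by term, invoking $uw_i \in I_{small} \subseteq I$ for the summands of the second kind.
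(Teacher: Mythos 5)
Your argument is correct and coincides with the paper's proof: both verify the claim on the generators $z+I$ and $w+I$ for $w\in J_{small}$, using $(u/z)(z+I)=u+I$ and $uw\in I_{small}\subseteq I$, and then conclude by $R/I$-linearity since $J$ is generated by $J_{small}\cup\{z\}$. The extra remarks about $z$ being a non-zerodivisor on $R/I$ simply restate what the paper establishes just before the proposition.
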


\begin{proof}
	Clearly        $(u/z) (z+I) = u+I \in R/I$.  Assume $w \in J_{small}$,   then
	since  $uw \in I_{small} \subset I$ we have 
	$(u/z) (w+I) = (uw + I) / (z+I) = 0 \in Q(R/I)$.   Since $J$ is generated as
	an ideal of $R$  by  $J_{small} \cup \{z\}$ the proposition follows.
\end{proof}

As a consequence, given  $ u  \in (  I_{small} :  J_{small}) $   there exists 
a well-defined element  $T_u  \in \operatorname{Hom}_{R/I}(J/I, {R/I})$
such that $T_u (z+I) = u$ and   $T_u ( w) = 0$  for all $w \in (J_{small} +I)/I$.

\begin{proposition}  \label{prop!ifkisfaf44}  We  have
	\[
	I \cap  R_{small}  =  I_{small}.
	\]
\end{proposition}

\begin{proof}
	By the definition of $I$ we have  $ I_{small} \subset I$, hence  $ I_{small} \subset (I \cap  R_{small})$.
	For the other inclusion, we assume   $w \in (I \cap  R_{small})$.   Hence, there exist   $m_i \in R$ such that
	\[
	w = \sum_{1 \leq i \leq s}   m_i h_i,
	\]
	with equality in $R$.  Substituting  $0$ for the variable $z$, and using that  $w,h_i \in R_{small}$ 
	we get  that  $w \in I_{small}$.
\end{proof}

\begin{proposition}  \label{prop!mapsdfetrw}   
	Assume   $ \phi  \in \operatorname{Hom}_{R/I}(J/I, {R/I})$ has the properties
	$\phi ( w) = 0$  for all $w \in (J_{small} +I)/I$ and that there exists 
	$u \in R_{small}$ such that   $\phi ( z+I) = u+I$.  Then 
	$ u  \in (  I_{small} :  J_{small}) $  and   $\phi = T_u$.
\end{proposition}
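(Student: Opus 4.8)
The plan is to unwind the hypotheses on $\phi$ directly, using the structure of $J$ and Proposition~\ref{prop!ifkisfaf44} to pin down $u$. First I would observe that since $J$ is generated as an ideal of $R$ by the set $J_{small} \cup \{z\}$, any $R/I$-module homomorphism out of $J/I$ is determined by its values on (the images of) the generators; here we are told $\phi(w) = 0$ for all $w \in (J_{small}+I)/I$ and $\phi(z+I) = u+I$, so $\phi$ is completely determined, and it remains only to check (a) that these prescribed values are consistent, i.e.\ that such a $\phi$ forces $u \in (I_{small}:J_{small})$, and (b) that once this holds, $\phi$ agrees with the element $T_u$ constructed just above.

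For step (a): let $w \in J_{small}$. Then $w z \in J$ lies in $J$ via two generating relations, and $R/I$-linearity of $\phi$ gives $(w+I)\cdot \phi(z+I) = \phi((wz)+I) = (z+I)\cdot\phi(w+I) = 0$ in $R/I$, since $\phi(w+I)=0$. Thus $(w+I)(u+I) = 0$ in $R/I$, i.e.\ $wu \in I$. But $wu \in R_{small}$ (both $w$ and $u$ lie in $R_{small}$), so by Proposition~\ref{prop!ifkisfaf44} we get $wu \in I \cap R_{small} = I_{small}$. Since $w \in J_{small}$ was arbitrary, this says precisely $u \cdot J_{small} \subseteq I_{small}$, i.e.\ $u \in (I_{small}:J_{small})$. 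This is the key step, and the only mild subtlety is making sure the multiplication-by-$z$ computation is legitimate — it is, because $z$ is a genuine element of $R$ and $\phi$ is $R/I$-linear, so $(z+I)$ acts on values of $\phi$.

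For step (b): now that $u \in (I_{small}:J_{small})$, Proposition~\ref{prop!mapsendsJinside23} (together with the discussion following it) guarantees that $T_u \in \operatorname{Hom}_{R/I}(J/I, R/I)$ exists and satisfies $T_u(z+I) = u$ and $T_u(w+I) = 0$ for all $w \in (J_{small}+I)/I$. So $\phi$ and $T_u$ are two $R/I$-homomorphisms $J/I \to R/I$ that agree on the generating set $(J_{small}+I)/I \cup \{z+I\}$ of the $R/I$-module $J/I$; hence $\phi = T_u$. I do not expect any real obstacle here — the argument is a short diagram-chase once the generator structure of $J$ is invoked — the one point to be careful about is citing correctly that $J/I$ is generated as an $R/I$-module by the images of $J_{small}$ together with $z$, which is immediate from $J = (J_{small}\cup\{z\})$ in $R$.
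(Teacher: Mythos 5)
Your proposal is correct and follows essentially the same route as the paper: the identity $0=(z+I)\phi(w+I)=\phi((zw)+I)=(w+I)\phi(z+I)=wu+I$ for $w\in J_{small}$, combined with Proposition~\ref{prop!ifkisfaf44} to conclude $wu\in I\cap R_{small}=I_{small}$ and hence $u\in(I_{small}:J_{small})$, after which $\phi=T_u$ follows because both maps agree on the generators $J_{small}\cup\{z\}$ of $J$. No gaps.
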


\begin{proof}
	Assume  $w \in J_{small}$.  We have 
	\[
	0 = (z+I) \phi (w+I)  = \phi ( (z+I) (w+I) )  =  (w+I)  \phi  (z+I)   =  wu+ I.
	\]
	Hence $wu \in I$.   Since $wu \in  R_{small}$, it follows that 
	$wu \in  (I \cap  R_{small})$.  Using Proposition~\ref{prop!ifkisfaf44}
	$wu \in I_{small}$, which implies that
	$ u  \in (  I_{small} :  J_{small}) $.  Since $J$ is generated as
	an ideal of $R$  by  $J_{small} \cup \{z\}$ the equality $\phi = T_u$   follows. 
\end{proof}

We will now prove that the second assumption in the previous proposition 
implies the first.

\begin{proposition}  \label{prop!mapsdfetrewrw}   Assume 
	$ \phi  \in \operatorname{Hom}_{R/I}(J/I, {R/I})$ has the 
	property that there exists 
	$u \in R_{small}$ such that   $\phi ( z+I) = u+I$.  Then 
	$\phi ( w) = 0$  for all $w \in (J_{small} +I)/I$. In addition,  
	$ u  \in (  I_{small} :  J_{small}) $   and   $\phi = T_u$.
\end{proposition}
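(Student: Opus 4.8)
The plan is to reduce Proposition~\ref{prop!mapsdfetrewrw} to Proposition~\ref{prop!mapsdfetrw} by showing its extra hypothesis---namely that $\phi$ kills $(J_{small}+I)/I$---is automatic once we know $\phi(z+I)=u+I$ for some $u\in R_{small}$. So the only real content to prove is: for every $w\in J_{small}$ we have $\phi(w+I)=0$ in $R/I$.

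First I would fix $w\in J_{small}$ and write down the element $a:=\phi(w+I)\in R/I$, with the aim of proving $a=0$. The $R/I$-linearity of $\phi$ gives the identity $(z+I)\,\phi(w+I)=\phi((zw)+I)=(w+I)\,\phi(z+I)=(wu)+I$, exactly as in the proof of Proposition~\ref{prop!mapsdfetrw}; thus $(z+I)\,a=(wu)+I$. Now I would observe that $wu\in R_{small}$ and, crucially, that $w\in J_{small}$ need not lie in $I_{small}$ yet---so I cannot immediately conclude $wu\in I_{small}$ the way Proposition~\ref{prop!mapsdfetrw} does. Instead, the key step is to use that multiplication by $z$ on $R/I$ is injective (this was noted right before Proposition~\ref{prop!mapsendsJinside23}, since $z$ does not occur in the generating set $I_{small}$ of $I$), so $a$ is the unique element of $R/I$ with $(z+I)a=(wu)+I$, provided one exists in $R/I$; and such an element does exist, namely $a=\phi(w+I)$ itself. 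So far this only re-expresses $a$; the substance is to pin down $wu$ modulo $I$.

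The main obstacle, and where the proof must do something, is showing $wu\in I$. I would argue as follows: set $v:=\phi(w+I)\in R/I$ and lift it to $\tilde v\in R=R_{small}[z]$; then $z\tilde v - wu\in I$ in $R$. Writing $\tilde v=\sum_k v_k z^k$ with $v_k\in R_{small}$ and recalling $I=I_{small}R$ is generated by elements of $R_{small}$, I would substitute $z=0$: since $I$ has a generating set not involving $z$, the $z$-degree-$0$ part of any element of $I$ lies in $I_{small}$, hence $-wu\in I_{small}$, i.e.\ $wu\in I_{small}$. (This is the same substitution trick used in the proof of Proposition~\ref{prop!ifkisfaf44}.) Since this holds for all $w\in J_{small}$ we get $u\in (I_{small}:J_{small})$, and then $\phi(w+I)=0$ for $w\in J_{small}$ because $zv=wu+I=0+I$ in $R/I$ and multiplication by $z$ is injective on $R/I$, forcing $v=0$. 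Finally, knowing that $\phi$ vanishes on $(J_{small}+I)/I$ and $\phi(z+I)=u+I$ with $u\in(I_{small}:J_{small})$, Proposition~\ref{prop!mapsdfetrw} applies verbatim to give $\phi=T_u$, completing the proof.

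I expect the delicate point to be the bookkeeping in the substitution $z\mapsto 0$: one must be careful that $I=I_{small}R$ really does have the property that its degree-zero-in-$z$ component (after expanding along powers of $z$) is contained in $I_{small}$, which is exactly Proposition~\ref{prop!ifkisfaf44} (giving $I\cap R_{small}=I_{small}$) applied after isolating the constant term. Everything else is a direct transcription of the arguments already used for Propositions~\ref{prop!ifkisfaf44} and~\ref{prop!mapsdfetrw}, combined with the injectivity of multiplication by $z$ on $R/I$.
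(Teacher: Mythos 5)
Your proof is correct and follows essentially the same route as the paper: both rest on the identity $(z+I)\,\phi(w+I)=wu+I$ together with the fact that $I$ is generated by elements of $R_{small}$, exploited by setting $z=0$ (equivalently, comparing coefficients of $z$). The only cosmetic difference is that you first extract $wu\in I_{small}$ and then invoke injectivity of multiplication by $z$ on $R/I$, whereas the paper compares $z$-coefficients in $wu-zr=\sum_i m_ih_i$ to read off $r\in I$ directly; the two are interchangeable.
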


\begin{proof}
	Using Proposition~\ref{prop!mapsdfetrw} it is enough to prove that
	$\phi ( w) = 0$  for all $w \in (J_{small} +I)/I$.

	Assume $w \in J_{small}$. We fix $r \in R$ with  $\phi(w+I) = r + I$.    We have  
	\[
	zr+I =   (z+I) \phi (w+I) =   \phi ( (z+I) (w+I) )  =  (w+I)  \phi  (z+I)   =  wu+ I.
	\]
	Hence,  $wu - zr \in I$, which implies that there exist  $m_i \in R$ such that
	\[
	wu  - zr =  \sum_{1 \leq i \leq s}   m_i h_i
	\]
	with equality in $R$.   We write,  for $1 \leq i \leq s$, 
	\[
	m_i =  \sum_{1 \leq i \leq s}   m_{1,i} z + m_{2,i},
	\]
	with  $m_{1,i} \in R$ and $m_{2,i} \in R_{small}$. Hence,
	\[
	wu  - zr =  z \sum_{1 \leq i \leq s}   m_{1,i} h_i +  \sum_{1 \leq i \leq s}   m_{2,i} h_i.
	\]
	Since $w,u, h_i, m_{2,i}  \in   R_{small}$,   looking at the coefficients of $z$
	it follows that 
	\[
	r =    -\sum_{1 \leq i \leq s}   m_{1,i} h_i,  
	\]
	with equality in $R$.  Hence $r \in I$, which implies that  $\phi ( w + I) = 0$.
\end{proof}

\begin{proposition}  \label{prop!msdfolxdgs}   Assume 
	$ \phi  \in \operatorname{Hom}_{R/I}(J/I, {R/I})$.   Then there exist
	$c \in R$ and an element $u \in (  I_{small} :  J_{small})$ such that 
	\[
	\phi = T_u  +  c i,
	\]
	where  $i :  J/I  \to {R/I}$  denotes the natural inclusion.
\end{proposition}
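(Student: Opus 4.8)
The plan is to reduce the statement to Proposition~\ref{prop!mapsdfetrewrw} by correcting $\phi$ with a suitable multiple of the natural inclusion $i : J/I \to R/I$. The key structural fact is that $J$ is generated as an ideal of $R$ by $J_{small} \cup \{z\}$, so that any $\phi \in \operatorname{Hom}_{R/I}(J/I, R/I)$ is determined by the single value $\phi(z+I)$ together with the values $\phi(w+I)$ for $w \in J_{small}$. The idea is to arrange, by subtracting an appropriate $c\cdot i$, that $\phi(z+I)$ is represented by an element of the subring $R_{small}$, and then to quote Proposition~\ref{prop!mapsdfetrewrw}.

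Concretely, I would first evaluate $\phi$ at $z+I \in J/I$ and choose a representative $r \in R$ with $\phi(z+I) = r+I$. Since $R = R_{small}[z]$, I can expand $r$ as a polynomial in $z$ with coefficients in $R_{small}$, and write $r = r_0 + z c$ with $r_0 \in R_{small}$ the constant term and $c \in R$. I would then set $\psi = \phi - c\cdot i \in \operatorname{Hom}_{R/I}(J/I, R/I)$, where $c\cdot i$ denotes the composition of $i$ with multiplication by (the image of) $c$; this is again an $R/I$-linear map. Because $i(z+I) = z+I$, one computes $\psi(z+I) = (r - zc) + I = r_0 + I$ with $r_0 \in R_{small}$.

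At this point Proposition~\ref{prop!mapsdfetrewrw} applies directly to $\psi$, with $u := r_0$: it gives that $\psi(w) = 0$ for all $w \in (J_{small}+I)/I$, that $r_0 \in (I_{small} : J_{small})$, and that $\psi = T_{r_0}$. Taking $u = r_0$ and recalling $\psi = \phi - c\cdot i$, we obtain $\phi = T_u + c\cdot i$ with $c \in R$ and $u \in (I_{small} : J_{small})$, which is exactly the assertion.

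I do not expect a genuine obstacle here: the substantive work is already contained in Proposition~\ref{prop!mapsdfetrewrw}, which is invoked as a black box. The only thing to get right is the choice of the correction term $c$, which is forced once one writes $\phi(z+I)$ in its $z$-expansion; here it is essential that the variable $z$ does not occur among the generators of $I$, so that ``constant term in $z$'' interacts well with passing modulo $I$, and that $J$ is generated by $J_{small}$ and $z$, so that killing the $z$-value of $\psi$ together with its values on $J_{small}$ pins it down completely.
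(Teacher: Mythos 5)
Your proposal is correct and follows exactly the paper's own argument: write a representative $r$ of $\phi(z+I)$ as $r = cz + u$ with $u \in R_{small}$ the constant term in $z$, subtract $c\,i$ to reduce to a homomorphism sending $z+I$ to an element of $R_{small}$ modulo $I$, and invoke Proposition~\ref{prop!mapsdfetrewrw}. No differences worth noting.
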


\begin{proof} 
	We fix $r \in R$ such that  $\phi  (z+I) = r+ I $.  We write
	\[
	r =   c z + u,  
	\]
	with  $c \in R$ and $u \in R_{small}$ and set 
	$\phi_1 = \phi -  c  i$.  We have that 
	$\phi_1 \in \operatorname{Hom}_{R/I}(J/I, {R/I})$ and 
	\[
	\phi_1  (z+I)  = \phi  (z+I) - c (z+I)  = u + I.
	\]
	By Proposition~\ref{prop!mapsdfetrewrw},  
	$ u  \in (  I_{small} :  J_{small}) $   and   $\phi_1 = T_u$.
\end{proof}

\begin{proposition}  \label{prop!tesldjkasd623}   Assume  $u, v \in 
	(  I_{small} :  J_{small})$.  Then $T_u = T_v$  if and only if 
	$u-v \in I_{small}$.
\end{proposition}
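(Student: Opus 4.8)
The plan is to compare $T_u$ and $T_v$ directly on a generating set of the $R/I$-module $J/I$, and then to invoke Proposition~\ref{prop!ifkisfaf44} to pass from membership in $I$ to membership in $I_{small}$.

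First I would record the structural fact that $J/I$ is generated, as an $R/I$-module, by the class $z+I$ together with the classes of the elements of $J_{small}$. This is immediate from the definition of $J$ as the ideal of $R$ generated by $J_{small}\cup\{z\}$. Consequently, any element of $\operatorname{Hom}_{R/I}(J/I,R/I)$ is determined by its values on $z+I$ and on $(J_{small}+I)/I$. By the very construction of $T_u$ and $T_v$ (recalled just before Proposition~\ref{prop!tesldjkasd623}), both maps send every element of $(J_{small}+I)/I$ to $0$. Hence $T_u=T_v$ if and only if $T_u(z+I)=T_v(z+I)$, that is, if and only if $u+I=v+I$ in $R/I$, i.e.\ $u-v\in I$.

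Next, since $u,v\in R_{small}$ we have $u-v\in R_{small}$, so $u-v\in I$ is equivalent to $u-v\in I\cap R_{small}$. By Proposition~\ref{prop!ifkisfaf44}, $I\cap R_{small}=I_{small}$, so this is equivalent to $u-v\in I_{small}$. Chaining these equivalences gives $T_u=T_v \iff u-v\in I_{small}$, as claimed.

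There is essentially no serious obstacle here: the only nontrivial input is the identity $I\cap R_{small}=I_{small}$, which is already established in Proposition~\ref{prop!ifkisfaf44} via the substitution $z\mapsto 0$. The one point to state carefully is that $T_u$ and $T_v$ genuinely agree on all of $(J_{small}+I)/I$ and not merely on generators of $J_{small}$ — but this is built into the definition of these homomorphisms, so the argument goes through verbatim.
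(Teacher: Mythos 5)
Your proposal is correct and follows essentially the same route as the paper's proof: both reduce the comparison of $T_u$ and $T_v$ to their values at $z+I$ (using that both maps vanish on $(J_{small}+I)/I$ and that $J$ is generated by $J_{small}\cup\{z\}$), and both convert $u-v\in I$ into $u-v\in I_{small}$ via Proposition~\ref{prop!ifkisfaf44}. The only cosmetic difference is that you phrase the argument as a single chain of equivalences where the paper treats the two implications separately.
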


\begin{proof}   
	Assume first that    $u-v \in I_{small}$.  Since $I_{small} \subset I$
	we get  $u + I = v  + I$.  Since  $J$ is the ideal of $R$ generated by the subset 
	$J_{small} \cup \{z\}$ and both $T_u $ and  $T_v $ are zero on  $(J_{small} +I)/I$,
	to prove $T_u = T_v$ it is enough to prove that if $T_u(z+I) = T_v(z+I) $.  This holds,
	since  $T_u(z+I) = u+I$  and  $T_v(z+I) = v +I $.
	
	Conversely, we assume that  $T_u = T_v$.  Evaluating at $z+I$ we get 
	\[
	u + I  =  T_u(z+I) =T_v(z+I) = v +I,
	\]
	which implies that  $u-v \in I$.  By assumption $u-v \in R_{small}$.  Proposition~\ref{prop!ifkisfaf44}
	now implies that $u-v \in I_{small}$. 
\end{proof}

We consider the $R_{small}$-module  
\[
L =  (  I_{small} :  J_{small}) /  I_{small}.
\]

\begin{theorem}  \label{prop!rnaedgssd} 
	The $R_{small}$-module   $L$ is cyclic.  Assume 
	$u \in (  I_{small} :  J_{small})$ has the property 
	that $u +  I_{small}$  generate  $L$.   Then 
	$T_u$ together with inclusion map  $i : J/I \to  R/I$
	generate the $R/I$-module $\operatorname{Hom}_{R/I}(J/I, {R/I})$.
\end{theorem}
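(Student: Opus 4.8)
The plan is to prove the two assertions of the theorem in tandem, deriving the cyclicity of $L$ from the Kustin-Miller structure of $\operatorname{Hom}_{R/I}(J/I, R/I)$. First I would recall the fundamental fact from Kustin-Miller unprojection theory (applied here via Proposition~\ref{prop!goodunprpair} and the short exact sequence~(\ref{eqn!sesforunprno1})): the $R/I$-module $\operatorname{Hom}_{R/I}(J/I, R/I)$ is generated by exactly two elements, the natural inclusion $i : J/I \to R/I$ and one further homomorphism $\varphi$; moreover the quotient $\operatorname{Hom}_{R/I}(J/I, R/I) / (R/I)\cdot i$ is isomorphic to $R/J$, which is cyclic, so any $\varphi$ whose image generates this quotient will do. So it suffices to produce a single $\varphi \in \operatorname{Hom}_{R/I}(J/I, R/I)$ that, together with $i$, generates the whole module, and then identify it with $T_u$ for an appropriate $u$.

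Next I would invoke Proposition~\ref{prop!msdfolxdgs}: every $\phi \in \operatorname{Hom}_{R/I}(J/I, R/I)$ can be written as $\phi = T_u + c\, i$ with $u \in (I_{small} : J_{small})$ and $c \in R$. Applying this to the Kustin-Miller generator $\varphi$ gives $\varphi = T_{u_0} + c_0\, i$ for some $u_0 \in (I_{small}:J_{small})$; since $c_0 \, i \in (R/I)\cdot i$, the pair $\{T_{u_0}, i\}$ also generates $\operatorname{Hom}_{R/I}(J/I, R/I)$. This establishes the second assertion for \emph{this particular} $u_0$. To get the stated form of the theorem, I would then argue that $u_0 + I_{small}$ generates $L$: the map $u \mapsto T_u$ descends, by Proposition~\ref{prop!tesldjkasd623}, to an injective $R_{small}$-linear map $L \hookrightarrow \operatorname{Hom}_{R/I}(J/I, R/I)$ whose image is precisely the set of $\phi$ vanishing on $(J_{small}+I)/I$ (by Propositions~\ref{prop!mapsdfetrw} and~\ref{prop!mapsdfetrewrw}); composing with the projection $\operatorname{Hom}_{R/I}(J/I, R/I) \to R/J$ from~(\ref{eqn!sesforunprno1}) one checks this composite $L \to R/J$ is an isomorphism of modules over $R_{small} = R/(z)$ acting through $R/I \to R/J$, because $T_u \mapsto \bar u$ and the $T_u$ span a complement to $(R/I)\cdot i$. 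Since $R/J \cong R_{small}/J_{small}$ is cyclic as an $R_{small}/I_{small}$-module, and these module structures match up, $L$ is cyclic, and any generator $u + I_{small}$ corresponds under the isomorphism to a generator of $R/J$, hence $T_u$ together with $i$ generates $\operatorname{Hom}_{R/I}(J/I, R/I)$.

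The main obstacle I anticipate is making the identification $L \cong R/J$ fully precise: one must verify that the injection $L \hookrightarrow \operatorname{Hom}_{R/I}(J/I, R/I)$ lands exactly in a submodule mapping isomorphically onto $R/J$ under the residue projection, i.e. that the image of $L$ together with $(R/I)\cdot i$ is everything \emph{and} their intersection is zero. Surjectivity onto the quotient follows from Proposition~\ref{prop!msdfolxdgs}; the triviality of the intersection is where one needs the full strength of the Kustin-Miller exact sequence, namely that $(R/I)\cdot i$ is \emph{all} of the copy of $R/I$ sitting inside $\operatorname{Hom}_{R/I}(J/I, R/I)$ (no torsion, the inclusion $R/I \hookrightarrow \operatorname{Hom}_{R/I}(J/I, R/I)$ being split off only as modules, not canonically). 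Once that is in hand, everything else is bookkeeping with the propositions already proved. An alternative, possibly cleaner, route to cyclicity of $L$ avoids the isomorphism entirely: directly show that the $R_{small}$-submodule of $\operatorname{Hom}_{R/I}(J/I, R/I)$ generated by $\{T_u : u \in (I_{small}:J_{small})\}$ surjects onto the cyclic module $R/J$ with kernel $I_{small}\cdot(\text{that submodule})$, using Proposition~\ref{prop!tesldjkasd623} to control the kernel, and then apply Nakayama in the local ring $R_{small}$ to conclude a single $T_{u_0}$ generates it.
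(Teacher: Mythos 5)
Your proposal is correct in substance and shares its first half with the paper's proof: both start from the short exact sequence~(\ref{eqn!sesforunprno1}) to obtain a two-element generating set $\{\varphi, i\}$, and both invoke Proposition~\ref{prop!msdfolxdgs} to replace $\varphi$ by some $T_{u_0}$ modulo $(R/I)\,i$. Where you diverge is in proving cyclicity of $L$ and the independence from the choice of generator: you package this as an $R_{small}$-module isomorphism $L \cong R/J$ obtained by composing $u \mapsto T_u$ with the residue projection, whereas the paper argues directly --- given any $w \in (I_{small}:J_{small})$ it writes $T_w = c_1 T_{u_0} + c_2\, i$, evaluates at $z+I$, substitutes $z=0$, and applies Proposition~\ref{prop!ifkisfaf44} to get $w + I_{small} \in R_{small}\,(u_0 + I_{small})$. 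Your route is more structural (the identification $L \cong R/J$ is a clean statement worth having), while the paper's is more economical and avoids having to verify both surjectivity and injectivity of the composite.

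One caveat on the step you rightly flag as the main obstacle. The triviality of the intersection of $\{T_u\}$ with $(R/I)\,i$ does \emph{not} follow from the exact sequence alone: exactness tells you that the kernel of the residue map is precisely $(R/I)\,i$, but you still must show that $T_u = c\,i$ forces $u \in I_{small}$. That verification is exactly the paper's $z=0$ substitution: from $T_u(z+I) = (c\,i)(z+I)$ you get $u - cz \in I$, and since $I$ is generated by elements of $R_{small}$ (equivalently, $R/I \cong (R_{small}/I_{small})[z]$), setting $z=0$ and invoking Proposition~\ref{prop!ifkisfaf44} yields $u \in I_{small}$. With that computation supplied, your argument closes; the remaining bookkeeping (an $R_{small}$-module generator of $R/J \cong R_{small}/J_{small}$ is a fortiori an $R/I$-module generator of $\operatorname{Hom}_{R/I}(J/I,R/I)/(R/I)\,i$) is as you describe.
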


\begin{proof}
	Using the short exact sequence $(\ref{eqn!sesforunprno1})$, there exists 
	$\phi \in \operatorname{Hom}_{R/I}(J/I, {R/I})$ that together 
	with the inclusion generate  $\operatorname{Hom}_{R/I}(J/I, {R/I})$
	as $R/I$-module.  Using Proposition~\ref{prop!msdfolxdgs}, there exist 
	$c \in R$ and $v \in (  I_{small} :  J_{small})$ such that 
	\[
	\phi = T_v  +  c i.
	\]
	As a consequence, $T_v$  together with the inclusion generate  $\operatorname{Hom}_{R/I}(J/I, {R/I})$
	as $R/I$-module. 
	
	We claim that $v+  I_{small}$ generate the  $R_{small}$-module  $L$.  Indeed,
	assume $w \in  (  I_{small} :  J_{small})$.   Then $T_w \in \operatorname{Hom}_{R/I}(J/I, {R/I})$,
	hence there exist  $c_1,c_2 \in R$ such that
	\[
	T_w = c_1 T_v +  c_2  i. 
	\]
	Consequently
	\[
	w + I = T_w  (z+I) = c_1 T_v  (z+I)  +  c_2  i(z+I) = (c_1v + c_2 z) + I.
	\]
	Hence, there exist $m_i \in R$ such that 
	\[
	w -  (c_1v + c_2 z) =   \sum_{1 \leq i \leq s}   m_{i} h_i,
	\]
	with equality in $R$.  Substituting  $0$ for the variable $z$,  using that  $w,h_i, u \in R_{small}$ 
	we get that $w -c_3 v  \in I$, where  $c_3  \in  R_{small}$  is the result of the 
	substitution of $0$ for $z$ in $c_1$.
	Since $w,c_3, v \in R_{small}$,  Proposition~\ref{prop!ifkisfaf44}
	implies that $w-c_3v \in I_{small}$.   Hence,   $w+ I_{small} =  c_3v + I_{small}$, therefore
	$L$ is cyclic with generator $v+  I_{small}$.
	
	Conversely, assume that  $u \in (  I_{small} :  J_{small})$ has the property 
	that $u +  I_{small}$  generate  $L$. This implies that there exists $c_4 \in R_{small}$ such that
	\[
	v+ I_{small} = c_4  (u+I_{small}),  
	\]
	hence 
	\[
	T_v = c_4 T_u.
	\]
	Since $T_v$  together with the inclusion generate  $\operatorname{Hom}_{R/I}(J/I, {R/I})$
	we get that also  $T_u$  together with the inclusion generate  $\operatorname{Hom}_{R/I}(J/I, {R/I})$.
\end{proof}

\begin{remark}  \label{rem!converseofProposition} 
	Assume $u \in (  I_{small} :  J_{small})$ has the property 
	that $T_u$ together with inclusion map  $i : J/I \to  R/I$
	generate the $R/I$-module $\operatorname{Hom}_{R/I}(J/I, {R/I})$.
	The arguments in the proof of  Theorem~\ref{prop!rnaedgssd} 
	show  that   $u +  I_{small}$  generates  $L$ as 
	$R_{small}$-module.
\end{remark}

\begin{remark}  Assume  $u \in (  I_{small} :  J_{small})$ has the property 
	that $u +  I_{small}$  generate  $L$.  This implies that 
	\[
	(  I_{small} :  J_{small}) =  I_{small} +  (u),
	\]
	where $(u)$ denotes the ideal of $R_{small}$ generated by $u$. 
	Then 
	\[
	(  I_{small} :  (u))  =    (  I_{small} :  (I_{small} + (u) ))   = 
	(  I_{small} :   (  I_{small} :  J_{small}))   = J_{small},
	\]
	where  for the last equality we used  \cite[p. 115, Proposition~5.2.3~d)]{Mi}.
\end{remark}

\phantom{===}

\section{The Gorenstein case of volume normalization}  \label{sec!gorensteinpolynomialreduction}

In the present section we discuss
the volume normalization theory of   $\fld[x_1, \dots , x_m]/I$ 
under the additional assumption that the quotient ring is Gorenstein.
We base our treatment on the notion of degenerate Kustin-Miller unprojection
introduced in Section~\ref{sec!degeneratekmunprojection}.

Assume $m \geq 1$ and  $\fld$ is a field.  We consider the polynomial 
ring  $\fld[x_1, \dots , x_m]$, where the degree of the variable $x_i$ is equal to $1$,
for all $1 \leq i \leq m$.  Assume   $I \subset \fld[x_1, \dots , x_m]$ 
is a homogeneous ideal with   $\fld[x_1, \dots , x_m]/I$ Gorenstein.
We denote by $d$ the   Krull dimension of the quotient ring  $\fld[x_1, \dots , x_m]/I$ 
and  assume  $d \geq 1$.

We will use the notations $R_{up}, I_{up},R_{down}, I_{down},\AR,e, S, E, \Phi,$
defined in  Section~\ref{sec!gericpolynomialreduction}.
Using the assumption that the  quotient ring  $\fld[x_1, \dots , x_m]/I$  is Gorenstein,  we get by  
Proposition~\ref{prop!dimformulaforrupjup} that    $R_{up}/ I_{up}$ is Gorenstein, since it is the quotient of 
$R_{up}/ (I)$ by a homogeneous regular sequence.    Consequently, 
the pair   $I_{up} \subset ( x_1, \dots , x_m)$ of ideals of $R_{up}$ 
is the predata for a degenerate  Kustin-Miller  unprojection 
(see Section~\ref{sec!degeneratekmunprojection}).

By  Theorem~\ref{prop!rnaedgssd},  the $R_{up}$-module 
$ (  I_{up} :  ( x_1, \dots , x_m)) /  I_{up}$  is cyclic, and we fix an element
$h \in (  I_{up} :  ( x_1, \dots , x_m))$ such that  $h + I_{up}$ generates
$(  I_{up} :  ( x_1, \dots , x_m)) /  I_{up}$.   We get the following
equality of ideals of $ R_{up}$
\begin{equation} \label{eqn!jkorafi}
	(  I_{up} :  ( x_1, \dots , x_m) )  =    I_{up} + (h).
\end{equation}

\phantom{===}

\begin{theorem}    \label{thm!ulocisnonzero}
	The element  $\Phi(h + I_{up})$  is a nonzero element of  $\Socle(\AR)$.
	Hence,  it is a basis of $\Socle(\AR)$ considered 
	as $E$-vector space.  
\end{theorem}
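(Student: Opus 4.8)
The goal is to show that $\Phi(h+I_{up})$ is a nonzero element of $\Socle(\AR)$. Since $\fld[x_1,\dots,x_m]/I$ is Gorenstein, hence in particular Cohen--Macaulay, Corollary~\ref{cor!cohenmacaulayinjctivity} tells us that the map $\Phi: N_{up}/I_{up} \to \Socle(\AR)$ is \emph{injective}. Recall also that, by definition, $N_{up} = I_{up}:(x_1,\dots,x_m)$, which is exactly the ideal denoted $(I_{up}:(x_1,\dots,x_m))$ in Equation~(\ref{eqn!jkorafi}). So the first step is simply to observe that $h \in N_{up}$ and that $h+I_{up}$ is a generator of the cyclic $R_{up}$-module $N_{up}/I_{up}$; in particular $h+I_{up} \neq 0$ in $N_{up}/I_{up}$, because otherwise the module would be zero, contradicting Remark~\ref{remew!relation758753} (which guarantees an element $h_t$ of $x$-degree $e$, so $N_{up}/I_{up}$ is nonzero). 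By injectivity of $\Phi$, we conclude $\Phi(h+I_{up}) \neq 0$.

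\textbf{The socle membership and the dimension count.}
It remains to see that $\Phi(h+I_{up})$ lies in $\Socle(\AR)$ and spans it as an $E$-vector space. That it lies in the socle is immediate: $\Phi$ takes values in $N_{down}/I_{down}$, and as recorded in Section~\ref{sec!gericpolynomialreduction} we have $N_{down}/I_{down} = \Socle(\AR)$. For the spanning statement, the key input is Proposition~\ref{proposmapessentiallysurjective}, which says the ideal of $\AR$ generated by the image of $\Phi$ is all of $\Socle(\AR)$; combined with Corollary~\ref{corol!relation758753}, the images $\Phi(h_t+I_{up})$ with $\de_x h_t = i$ span $(\Socle(\AR))_i$ for each $i$. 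Since $N_{up}/I_{up}$ is cyclic with generator $h+I_{up}$, every $h_t$ is an $R_{up}$-multiple of $h$, so every $\Phi(h_t+I_{up})$ is an $E$-multiple of $\Phi(h+I_{up})$ (the $x_j$ act as zero and only the $\theta$-part survives, landing in $E$). Hence $\Phi(h+I_{up})$ alone spans all of $\Socle(\AR)$. Being a nonzero spanning element, it is a basis, and in particular $\Socle(\AR)$ is one-dimensional over $E$ — consistent with $\AR$ being the generic Artinian reduction of a Gorenstein ring, whose socle is $1$-dimensional.

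\textbf{Main obstacle.}
The only genuinely substantive point is the passage from ``cyclic generator of $N_{up}/I_{up}$'' to ``nonzero after applying $\Phi$'', and this is precisely where Gorensteinness is used: it forces $R_{up}/I_{up}$ to be Cohen--Macaulay (Corollary~\ref{cor!cohenmacaulaysubcase21}), hence without embedded primes (by \cite[p.~58, Theorem~2.1.2]{BH}), hence $\Phi$ injective (Proposition~\ref{prop!whatifnboembeddedprimes}). Examples~\ref{example!intwovariables} and~\ref{example!inthreevariables} show that without such a hypothesis a cyclic generator of $(N_{up}/I_{up})_{(e,-)}$ can map to zero under $\Phi$, so the Cohen--Macaulay detour is not a formality but the heart of the matter. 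Everything else is bookkeeping with the degree-respecting property of $\Phi$ and the structural results already established. The proof I would write is therefore short: cite Corollary~\ref{cor!cohenmacaulayinjctivity} for injectivity, note $h+I_{up}\neq 0$ since it generates the nonzero module $N_{up}/I_{up}$, deduce $\Phi(h+I_{up})\neq 0$, and invoke Proposition~\ref{proposmapessentiallysurjective} together with cyclicity to upgrade ``nonzero socle element'' to ``basis of the socle''.
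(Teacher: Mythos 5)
Your proof is correct, but it reaches the key nonvanishing statement by a genuinely different route than the paper. The paper's proof is two lines: since $\fld[x_1,\dots,x_m]/I$ is Gorenstein, $\AR$ is a graded Artinian Gorenstein $E$-algebra, so $\Socle(\AR)=\AR_e$ is a nonzero $1$-dimensional $E$-vector space; Corollary~\ref{corol!relation758753} together with the cyclicity of $(I_{up}:(x_1,\dots,x_m))/I_{up}$ then says that the single element $\Phi(h+I_{up})$ generates this nonzero space, hence is nonzero and a basis. You instead obtain nonvanishing from injectivity of $\Phi$, via the chain Gorenstein $\Rightarrow$ Cohen--Macaulay $\Rightarrow$ no embedded primes $\Rightarrow$ Corollary~\ref{cor!cohenmacaulayinjctivity}, combined with $h+I_{up}\neq 0$. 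That is valid, and your spanning argument (cyclicity plus Corollary~\ref{corol!relation758753}, with the $x_j$ acting as zero on the socle so that only $E$-multiples survive) matches the paper's. What the paper's route buys is economy: it never needs injectivity of $\Phi$, only the generation statement, and in fact observing that $h+I_{up}\neq 0$ already forces you back to Corollary~\ref{corol!relation758753} and $\AR_e\neq 0$, at which point the spanning argument alone finishes. One point in your closing commentary is off: Examples~\ref{example!intwovariables} and~\ref{example!inthreevariables} do \emph{not} exhibit a cyclic generator of $(N_{up}/I_{up})_{(e,-)}$ mapping to zero --- there $e=1$ and the kernel elements ($x_1^4+I_{up}$) sit in $x$-degree $4$, where the socle vanishes; the generator of the $(e,-)$-piece always has nonzero image by Corollary~\ref{corol!relation758753} (this is exactly what Proposition~\ref{prop!cycliccase} and Definition~\ref{quest!definitioninthecycliccase} exploit). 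So injectivity of $\Phi$ is not the heart of the matter here; it is a sufficient but unnecessary detour.
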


\begin{proof}      
	Since $\AR$ is a graded Artinian Gorenstein  $E$-algebra, 
	$\Socle(\AR)$ is a $1$-dimensional $E$-vector space and 
	$\Socle(\AR) = \AR_e$.  The result follows by Corollary~\ref{corol!relation758753}.
\end{proof}

Using the theorem,  there exists a unique  volume normalization linear isomorphism $\AR_e \to E$
such that it sends  $\Phi(h + I_{up})$ to $1$.

\phantom{===}

\subsection{The complete intersection case of volume normalization and relation with residues}  \label{subsec!completeintersectioncase}

In this subsection we discuss the volume normalization theory of the algebra   $\fld[x_1, \dots , x_m]/I$ 
under the additional assumption that $I$ is a homogeneous
complete intersection ideal.  In addition, we relate it to the theory of multidimensional residues.

Assume $m \geq 1$ and  $\fld$ is a field.   We consider the polynomial 
ring  $\fld[x_1, \dots , x_m]$, where the degree of the variable $x_i$ is equal to $1$,
for all $1 \leq i \leq m$.   Assume   $d \geq 1$ and 
\[
I =  (H_1, \dots  , H_{m-d} )  \subset \fld[x_1, \dots , x_m]
\]  
is a homogeneous ideal with  the property  that 
$H_1, \dots  , H_{m-d}$  is a homogeneous regular sequence  of $\fld[x_1, \dots , x_m]$.
As a consequence,  the  Krull dimension of the quotient ring  $\fld[x_1, \dots , x_m]/I$ is $d$.

We will use the notations $R_{up}, I_{up},\AR,e,  E, \Phi,$
defined in  Section~\ref{sec!gericpolynomialreduction}.

Using the assumption that the  $H_1, \dots  , H_{m-d}$  is a homogeneous regular sequence,
we get that  
\[ 
H_1, \dots  , H_{m-d}, f_1,  \dots , f_d 
\]
is a homogeneous regular sequence  of  $R_{up}$. In particular,
$R_{up}/ I_{up}$ is Gorenstein.

We fix an  $m \times m$ matrix $M$ such that 
\[
(H_1, \dots  , H_{m-d}, f_1,  \dots , f_d )^t  =  M  (x_1, \dots  , x_m)^t,
\]
where $N^t$  denotes the transpose of the matrix $N$. We denote by
$\Delta$ the determinant of the matrix $M$.
Combining the computations in \cite[Section~4]{P} 
(see also Subsection~\ref{subs!citoci})  with the discussion in
Section~\ref{sec!gorensteinpolynomialreduction}  we get that 
\[
(  I_{up} :  ( x_1, \dots , x_m) )  =    I_{up} + (\Delta).
\]
As a consequence, we have the following proposition.

\begin{proposition}  \label{prop!degnorminthecicase}   
	There exists a  volume normalization linear isomorphism $\AR_{e} \to E$
	uniquely specified by the property that $\Phi (\Delta)$ maps to $1$.
\end{proposition}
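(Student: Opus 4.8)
The plan is to deduce Proposition~\ref{prop!degnorminthecicase} directly from the machinery already developed, namely the Gorenstein case of Section~\ref{sec!gorensteinpolynomialreduction} together with the explicit identification of the cyclic generator in the complete intersection setting. First I would record that $H_1,\dots,H_{m-d}$ being a homogeneous regular sequence in $\fld[x_1,\dots,x_m]$ forces $H_1,\dots,H_{m-d},f_1,\dots,f_d$ to be a homogeneous regular sequence in $R_{up}$: indeed $R_{up}/(H_1,\dots,H_{m-d})$ is a polynomial ring over $\fld[x_1,\dots,x_m]/I$ in the variables $\theta_{i,j}$, and since $\dim(R_{up}/I_{up}) = dm$ by Proposition~\ref{prop!dimformulaforrupjup} while $\dim R_{up}/(H_1,\dots,H_{m-d}) = dm + d$, the forms $f_1,\dots,f_d$ cut the dimension by exactly $d$; in a Cohen-Macaulay (here Gorenstein) ring a sequence dropping dimension by its length is a regular sequence. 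Consequently $R_{up}/I_{up}$ is Gorenstein, so the hypotheses of Section~\ref{sec!gorensteinpolynomialreduction} apply with this $I$.

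Next I would invoke the key input that makes the complete intersection case explicit: writing $(H_1,\dots,H_{m-d},f_1,\dots,f_d)^t = M(x_1,\dots,x_m)^t$ for an $m\times m$ matrix $M$ over $R_{up}$ and $\Delta = \det M$, the cited computation (\cite[Section~4]{P}, cf.\ Subsection~\ref{subs!citoci}) yields the ideal equality
\[
(I_{up} : (x_1,\dots,x_m)) = I_{up} + (\Delta).
\]
This means precisely that $\Delta + I_{up}$ generates the cyclic $R_{up}$-module $(I_{up}:(x_1,\dots,x_m))/I_{up}$, which by Theorem~\ref{prop!rnaedgssd} (applied to the degenerate Kustin-Miller predata $I_{up}\subset(x_1,\dots,x_m)$) is exactly the role played by the element $h$ in Section~\ref{sec!gorensteinpolynomialreduction}. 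In other words, $\Delta$ is a valid choice of the generator $h$ appearing in equation~(\ref{eqn!jkorafi}).

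Finally I would apply Theorem~\ref{thm!ulocisnonzero}: since $\Delta$ generates $(I_{up}:(x_1,\dots,x_m))/I_{up}$, the element $\Phi(\Delta + I_{up})$ is a nonzero element of $\Socle(\AR)$, and because $\AR$ is graded Artinian Gorenstein over $E$ its socle is the one-dimensional space $\AR_e$; hence $\Phi(\Delta)$ is an $E$-basis of $\AR_e$. Therefore there is a unique $E$-linear isomorphism $\vo : \AR_e \to E$ sending $\Phi(\Delta)$ to $1$, and this is by definition (Definition~\ref{quest!definitioninthecycliccase}, noting $(N_{up}/I_{up})_{(e,-)}$ is the relevant cyclic module in the Gorenstein case) the volume normalization. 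This proves the proposition.

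The only step requiring genuine care — and the one I expect to be the main obstacle — is the verification that $\Delta$ is indeed a generator of $(I_{up}:(x_1,\dots,x_m))/I_{up}$, i.e.\ importing the residue-theoretic computation of \cite[Section~4]{P} into the present bigraded setup. One must check that the Koszul/Wiebe-type argument identifying the socle generator of a complete intersection Artinian quotient with the Jacobian-style determinant $\Delta$ survives the base change to $R_{up}$ and is compatible with the degenerate unprojection picture of Section~\ref{sec!degeneratekmunprojection}; once that compatibility is in hand (as asserted via Subsection~\ref{subs!citoci}), the rest is a formal consequence of the already-established Gorenstein theory.
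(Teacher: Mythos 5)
Your proposal is correct and follows essentially the same route as the paper: the paper also establishes that $H_1,\dots,H_{m-d},f_1,\dots,f_d$ is a homogeneous regular sequence in $R_{up}$ (so that $R_{up}/I_{up}$ is Gorenstein), imports the equality $(I_{up}:(x_1,\dots,x_m)) = I_{up}+(\Delta)$ from \cite[Section~4]{P} together with the discussion of Section~\ref{sec!gorensteinpolynomialreduction}, and then reads off the proposition as an immediate consequence via Theorem~\ref{thm!ulocisnonzero}. Your extra dimension-count justification of the regular-sequence step is a fine (and slightly more explicit) version of what the paper asserts.
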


Comparing  Proposition~\ref{prop!degnorminthecicase}   with
\cite[p.~42, Equation~(1.50)]{CD} we get a connection, in the complete intersection setting,
between the volume normalization theory and the theory of multidimensional residues.

In particular, we can generalize the projective residue map
$\; \res^{\mathbb{P}^n}_{<F_0, \dots , F_n>} \;$  defined in \cite[p.~40]{CD}
as follows.

Assume $m \geq 0$ and  $\fld$ is a field.   We consider the polynomial 
ring  $R= \fld[x_0, \dots , x_m]$, where the degree of the variable $x_i$ is equal to $1$,
for all $0 \leq i \leq m$.   Assume   $ I \subset R$ is a
homogeneous ideal such that the quotient  $R/I $ is Artinian Gorenstein.
We set $J = (x_0, \dots , x_m)$.
Then, the pair  $I  \subset  J$ is a predata
for a degenerate Kustin-Miller unprojection 
(see Section~\ref{sec!degeneratekmunprojection}).
By  Theorem~\ref{prop!rnaedgssd},  the $R$-module 
$ N  =  (  I :  J) /  I$  is cyclic, and we fix an element
$h \in (I :  J)$ such that $h+I$ generates $N$ as an $R$-module.

\begin{definition}    \label{dfn!rprojectiveresidue}
	Assume $Q \in R$ is a homogeneous element.
	If there exists $\lambda \in \fld$ with  $Q = \lambda h$
	(equality in $R/I$), we set 
	$\; \res^{\mathbb{P}^n}_{I,h} (Q) = \lambda$, otherwise
	we set    $\; \res^{\mathbb{P}^n}_{I,h} (Q) = 0$.
\end{definition}

\begin{remark}    \label{rem!dependenceuptosign}
	Using  Remark~\ref{rem!uniquenessuptoscalar} we have the following.
	Assume $h_1, h_2 \in (  I :  J )$  have the property that, for all $1 \leq i \leq 2$, the element
	$h_i + I$ generates $N$ as an $R$-module.   Then there exists unique $c \in \fld \setminus \{ 0  \}$
	such that
	\[
	\res^{\mathbb{P}^n}_{I,h_2} = c   \; \res^{\mathbb{P}^n}_{I,h_1} 
	\]
	In particular, if the field $\fld$ has two elements then  Definition~\ref{dfn!rprojectiveresidue} 
	does not depend on the choice of $h$.
\end{remark}

\phantom{===}

\section{Volume normalization in the case of simplicial spheres}    \label{sec!applicationtosimplicialspheres}

In the present section we compute the volume normalization 
of the face ring of a simplicial sphere.

Assume $\fld$ is a field and $D$ is a  simplicial sphere
with vertex set  $\{1, 2, \dots, m\}$  and dimension
equal to  $d-1 \geq 1$. 
We  denote by $I_D \subset \fld[x_1, \dots  ,x_m]$   the Stanley-Reisner ideal of $D$
and by $\fld[D]= \fld[x_1, \dots  ,x_m]/I_D$ the face ring of $D$ over $\fld$.
We set  
\[
R_{up} =\fld[x_1,\dots, x_m, \theta_{i,j}: 1\leq i\leq d, 1\leq j\leq m],
\] 
and,  for $1 \leq i \leq d$, we set 
\[
f_i = \sum_{j=1}^m \theta_{i,j} x_j \in R_{up}.
\]
We denote by  $I_{up}$ the ideal of $R_{up}$ generated by $I_D \cup  \{f_1,\dots, f_d \}$,
and we set
\[
L =  (  I_{up} :  (x_1,  \dots, x_m)) /  I_{up}.
\]

Since  $D$ is a simplicial sphere, by  \cite[Section~5]{BH}  $\fld[D]$ is Gorenstein
of Krull dimension $d$. As a consequence,  the pair 
$I_{up} \subset (x_1,  \dots, x_m)$ is a  predata for a degenerate Kustin-Miller  unprojection
(see Section~\ref{sec!degeneratekmunprojection}).

\begin{proposition}   \label{propos!liscyclic}
	The $R_{up}$-module $L$ is cyclic. 
\end{proposition}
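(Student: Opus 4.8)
The plan is to recognize this as a direct instance of Theorem~\ref{prop!rnaedgssd}, once one checks that $I_{up} \subset (x_1, \dots, x_m)$ is a valid predata for a degenerate Kustin-Miller unprojection. The structural input is that $D$ is a simplicial sphere, so by \cite[Section~5]{BH} the face ring $\fld[D] = \fld[x_1, \dots, x_m]/I_D$ is Gorenstein of Krull dimension $d$; in particular it is Cohen-Macaulay. Thus $L$ is precisely the module $(I_{up} : (x_1,\dots,x_m))/I_{up}$ considered in the general Gorenstein discussion of Section~\ref{sec!gorensteinpolynomialreduction}, specialized to $I = I_D$.

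First I would verify the predata hypotheses. Since $\fld[D]$ is Gorenstein, the ring $R_{up}/(I_D) \cong \fld[D][\theta_{i,j} : 1 \leq i \leq d, \ 1 \leq j \leq m]$ is Gorenstein, being a polynomial extension of a Gorenstein ring, of Krull dimension $d + dm$. By Proposition~\ref{prop!dimformulaforrupjup} we have $\dim(R_{up}/I_{up}) = dm$, so the $d$ linear forms $f_1, \dots, f_d$ drop the dimension of $R_{up}/(I_D)$ by exactly $d$; as $R_{up}/(I_D)$ is Cohen-Macaulay, $f_1, \dots, f_d$ is therefore a homogeneous regular sequence on it, and hence $R_{up}/I_{up} = R_{up}/(I_D, f_1, \dots, f_d)$ is Gorenstein of Krull dimension $dm$. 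On the other hand $R_{up}/(x_1, \dots, x_m) \cong \fld[\theta_{i,j}]$ is a polynomial ring, hence Gorenstein, also of Krull dimension $dm$. So $I_{up} \subset (x_1, \dots, x_m)$ is a pair of ideals whose quotients are Gorenstein of the same Krull dimension — exactly the predata condition of Section~\ref{sec!degeneratekmunprojection}.

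Finally I would invoke Theorem~\ref{prop!rnaedgssd} with $R_{small} = R_{up}$, $I_{small} = I_{up}$, $J_{small} = (x_1, \dots, x_m)$, which asserts exactly that $L = (I_{up} : (x_1, \dots, x_m))/I_{up}$ is cyclic, the desired conclusion. The only point requiring care is that Theorem~\ref{prop!rnaedgssd} is phrased for a \emph{local} Noetherian ring, whereas $R_{up}$ is only graded Noetherian; this is harmless, since $L$ is a finitely generated graded $R_{up}$-module and the minimal number of its generators is unchanged upon localizing at the irrelevant maximal ideal $\m = (x_1, \dots, x_m, \theta_{i,j})$ — which preserves the Gorenstein property and the relevant Krull dimensions — so one applies the theorem to $(R_{up})_\m$ (alternatively, the unprojection machinery of \cite{PR} carries over verbatim to the graded category). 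I do not expect a substantial obstacle here: all the real work is contained in Theorem~\ref{prop!rnaedgssd} and in the dimension formula of Proposition~\ref{prop!dimformulaforrupjup}, both of which are available.
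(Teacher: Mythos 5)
Your proof is correct and follows essentially the same route as the paper: verify that $I_{up}\subset(x_1,\dots,x_m)$ is a predata for a degenerate Kustin-Miller unprojection (using that $\fld[D]$ is Gorenstein and the dimension formula of Proposition~\ref{prop!dimformulaforrupjup}) and then apply Theorem~\ref{prop!rnaedgssd}. Your added remark on passing from the graded to the local setting is a point the paper glosses over, but it does not change the argument.
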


\begin{proof}
	It is an immediate application of  Theorem~\ref{prop!rnaedgssd}.
\end{proof}

We will now describe a generator of the cyclic module $L$.
We fix an ordered  facet   $\sigma=(b_1, \dots , b_d)$  of $D$.  This just means 
that the set  $\{b_1,  \dots ,b_d\}$  is a facet of $D$.
We set  $x_{\sigma}=  \prod_{1 \leq i \leq d} x_{b_i} \in R_{up}$. Moreover,
we denote by $[\sigma]$ the determinant of the $d \times d$ matrix with 
$(i,j)$-entry equal to $\theta_{i,b_j}$.  Finally, we set 
\begin{equation}  \label{eqn!definitionofu}
	h =  [\sigma] x_{\sigma} \in  R_{up}.
\end{equation}
The proof of the following theorem will be given in Subsection~\ref{subs!pfoftheoremeeree}.

\begin{theorem}   \label{theorem!eeree}
	The element $h + I_{up}$ is a generator of the cyclic $R_{up}$-module $L$.
\end{theorem}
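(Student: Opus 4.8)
The plan is to verify directly that the element $h = [\sigma]x_\sigma$ lies in $(I_{up} : (x_1,\dots,x_m))$ and then to show it generates $L$ by showing its image under $\Phi$ is nonzero in $\Socle(\AR)$, combined with the fact that $L$ is cyclic (Proposition~\ref{propos!liscyclic}). For the membership claim, I would first record the standard fact that in the face ring $\fld[D]$ of a simplicial sphere of dimension $d-1$, the socle of the Artinian reduction by a linear system of parameters is concentrated in top degree $e=d$ and is spanned by the monomial $x_\sigma$ for any facet $\sigma$ (this uses Gorenstein-ness and the Poincaré duality property: $x_\sigma$ multiplied by any $x_j$ with $j\notin\sigma$ is zero because $\sigma\cup\{j\}$ is a nonface, while multiplication by $x_{b_i}$ lands in degree $d+1$, which vanishes). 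Lifting this to $R_{up}$, I would check that $[\sigma]x_\sigma \cdot x_j \in I_{up}$ for every $j\in[m]$: for $j\notin\sigma$ this is already in $I_D\subset I_{up}$, and for $j=b_k\in\sigma$ one uses Cramer's rule, i.e. expand $[\sigma]x_{b_k}$ using the linear relations $f_i = \sum_j \theta_{i,j}x_j$ to rewrite $[\sigma]x_{b_k}x_\sigma$ as a combination of the $f_i$ times degree-$d$ monomials, modulo monomials supported on nonfaces. This is the Cramer's-rule computation familiar from the toric volume formula.

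Next, I would show $h+I_{up}$ actually generates $L$. By Proposition~\ref{propos!liscyclic} we know $L$ is cyclic, say with generator $g+I_{up}$; and by Theorem~\ref{prop!rnaedgssd} (applied in the Gorenstein setting of Section~\ref{sec!gorensteinpolynomialreduction}) $\Phi(g+I_{up})$ is a basis of the one-dimensional $E$-vector space $\Socle(\AR)=\AR_e$. So it suffices to prove $\Phi(h+I_{up})\neq 0$, equivalently that $[\sigma]x_\sigma$ is not in $I_{down}$. This follows because $[\sigma]$, being a $d\times d$ minor of the generic matrix $(\theta_{i,j})$, is a nonzero element of $E=\fld(\theta_{i,j})$, hence a unit in $R_{down}$, so $[\sigma]x_\sigma \in I_{down}$ iff $x_\sigma\in I_{down}$ — but $x_\sigma$ is nonzero in $\AR$ precisely because $\sigma$ is a facet and $\AR$ has $\AR_d = \AR_e\neq 0$ spanned by exactly such facet monomials (again Gorenstein Poincaré duality for the sphere). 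Then $h+I_{up}$ is a nonzero multiple of the generator $g+I_{up}$ in the cyclic module $L$; since $R_{up}$ is bigraded with $(R_{up})_{(0,0)}=\fld$ and $h,g$ are bihomogeneous of the same bidegree $(d,1)$ (here I should note that $\Phi(g)$ being a basis of $\AR_e$ forces $\de_x g = e = d$, by Corollary~\ref{corol!relation758753} and Remark~\ref{remew!relation758753}, and then $\de_\theta g = 1$ as well), the coefficient relating them lies in $\fld^\ast$, so $h+I_{up}$ is itself a generator.

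The main obstacle I anticipate is the membership step $h\in(I_{up}:(x_1,\dots,x_m))$, specifically handling the case $j=b_k\in\sigma$: one must carry out the Cramer's-rule bookkeeping carefully, expressing $x_{b_k}\cdot[\sigma]$ via the relations $f_i$ modulo the monomial ideal $I_D$, and confirm that all error terms are supported on nonfaces of $D$ and therefore lie in $I_{up}$. A clean way to organize this is to work modulo $I_D$ first: in $\fld[D][\theta_{i,j}]$ the facet $\sigma$ spans a free rank-one piece in top degree, and the linear forms $f_i$ restricted to the $d$ coordinates indexed by $\sigma$ form a matrix with determinant $[\sigma]$, so Cramer's rule gives $[\sigma]x_{b_k} \equiv \sum_i c_{k,i} f_i \pmod{(\text{monomials not supported on }\sigma)}$ for suitable $c_{k,i}$; multiplying by $x_\sigma/x_{b_k}$ and using that any monomial strictly larger than a facet vanishes yields $h x_{b_k}\in I_{up}$. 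I would also double-check the degenerate Kustin–Miller framework indeed applies (predata $I_{up}\subset(x_1,\dots,x_m)$, both quotients Gorenstein of Krull dimension $dm$ and $dm$ respectively), which was already established in the text preceding Proposition~\ref{propos!liscyclic}, so I may invoke it freely.
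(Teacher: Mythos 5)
Your membership step, $h=[\sigma]x_\sigma\in(I_{up}:(x_1,\dots,x_m))$, is sound: for $j\notin\sigma$ one has $x_\sigma x_j\in I_D$ because $\sigma$ is a facet, and for $j=b_k\in\sigma$ the adjugate identity $[\sigma]x_{b_k}=\sum_i(\operatorname{adj}A)_{k,i}(f_i-r_i)$ with $r_i=\sum_{j\notin\sigma}\theta_{i,j}x_j$ does exactly what you describe. This is essentially the computation the paper imports from \cite[Corollary~4.5]{PP} in Propositions~\ref{clm6!equal} and~\ref{clm75!qual}. The nonvanishing $\Phi(h+I_{up})\neq 0$ is also fine (it is Proposition~\ref{clm7!qual}, again via \cite{PP}).

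The gap is in your final step. By Proposition~\ref{prop!cycliccase}, $L$ is a \emph{free} rank-one $\fld[\theta_{i,j}]$-module on its generator $g+I_{up}$, so $h+I_{up}=p\cdot(g+I_{up})$ for a unique $p\in\fld[\theta_{i,j}]$, and $\Phi(h)\neq 0$ only tells you $p\neq 0$; since every nonzero $p$ becomes a unit in $E$, nonvanishing of $\Phi(h)$ in $\AR_e$ cannot distinguish a generator of $L$ from an arbitrary nonzero homogeneous element. To conclude $p\in\fld^\ast$ you need $g$ and $h$ to have the same bidegree. You assert both have bidegree $(d,1)$, but $h=[\sigma]x_\sigma$ has $\theta$-degree $d$ (it involves a $d\times d$ minor of $(\theta_{i,j})$), and — more importantly — you offer no justification for the $\theta$-degree of $g$: Corollary~\ref{corol!relation758753} and Remark~\ref{remew!relation758753} pin down only the $x$-degree. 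Determining the $\theta$-degree of the generator is precisely the hard point of the theorem; the paper settles it by computing the bidegrees in the minimal free resolutions $C^1,\dots,C^4$ and the induced chain map (Proposition~\ref{clm56!who}), which forces the unprojection element to have bidegree $(d,d)$, matching $h$. Without that input (or an equivalent lower bound on the $\theta$-degree of any nonzero bihomogeneous element of $L$), your argument proves only that $h+I_{up}$ is a nonzero element of $L$, not that it generates.
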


\begin{remark} \label{rem!simplicialspheresmotivationgexample}
	We use the notations $E=\fld(\theta_{i,j}), R_{down}= E[x_1, \dots , x_m] , 
	I_{down} = (I_D) + (f_1, \dots , f_d) \subset  R_{down} , \AR=R_{down}/I_{down} \; $  
	defined in Section~\ref{sec!gericpolynomialreduction}.
	We remark that (up to sign)  the volume normalization 
	linear isomorphism  $\AR_{d} \to E$   
	defined in  \cite[p.~6, Equation (1)]{PP} (and denoted there by the notation
	$\Psi_{e}$)
	is  uniquely specified by the condition that the element   $\; [\sigma] x_{\sigma}  + I_{down} \;$  maps to  $1$.
\end{remark}

\subsection {Proof of  Theorem~\ref{theorem!eeree}}   \label{subs!pfoftheoremeeree}

We assume $z$ is a new variable and we set  $R=R_{up}[z]$, hence
\[   
R= \fld[z, x_1,\dots, x_m,  \theta_{i,j}: 1\leq i\leq d, 1\leq j\leq m].
\]
We consider the ideals  $I= (I_D)+ (f_1,\dots, f_d)$, $J= (x_1,\dots, x_m, z)$  
and  $P= (I_D)$  of $R$.   Recall, that for any $w \in (I_{up} : (x_1,  \dots, x_m))$
we defined in Section~\ref{sec!degeneratekmunprojection}
an element $T_w \in \operatorname{Hom}_{R/I}(J/I, R/I)$. 
We will prove in Proposition~\ref{clm75!qual}  that  $h \in   (I_{up} : (x_1,  \dots, x_m))$.
Using that and Remark~\ref{rem!converseofProposition}, to show Theorem~\ref{theorem!eeree}
it is enough to prove Proposition~\ref{prop!iogsdg} below,  which states that
the map $T_{h}$ together with the inclusion 
$i$ generate $\operatorname{Hom}_{R/I}(J/I, R/I)$ as $R/I$-module.

We set 
\[
c= \operatorname{codim} \, {I_D}= \dim \fld[x_1, \dots  ,x_m] - \dim (\fld[x_1, \dots  ,x_m]/I_D)= m-d.
\]
We use the following bidegree for  $R$. For $a,b \in \mathbb{Z}$, we say that an element  $w \in R$
has bidegree $(a,b)$  if it is homogeneous of  degree $a$ with respect to the variables $x_i$ and $z$
and  is homogeneous of degree $b$ with respect to the variables $\theta_{i,j}$.  We denote by $R_{a,b}$ the subset
of $R$ consisting of bihomogeneous elements of bidegree $(a,b)$, of course $R_{a,b}=0$ if
$a <0$ or $b <0$. For $a,b \in \mathbb{Z}$ we denote  by $R(a,b)$ the bigraded
$R$-module with  $R(a,b)_{k,l}= R_{a+k, b+l}$ for all  $k,l \in \mathbb{Z}$.

\vspace{1.8mm}

We denote by

\[C^{1}: \text{ minimal graded free resolution of $R/P$ as $R$-module.}\]

\[C^{2}: \text{ minimal graded free resolution of  $R/(f_1,\dots, f_d)$ as $R$-module.}\]

\[C^{3}: \text{ minimal graded free resolution of $R/I$ as $R$-module.}\]

\[C^{4}: \text{ minimal graded free resolution of $R/J$ as $R$-module.}\]

\vspace{1.0mm}

\begin{proposition}
	$C^{1}$ is of the form
	\[0\rightarrow F_{c} = R(-m,0)\rightarrow F_{c-1}\rightarrow \dots \rightarrow F_{1}\rightarrow F_{0}=R\]
\end{proposition}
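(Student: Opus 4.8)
The plan is to reduce everything to the minimal graded free resolution of the face ring $\fld[D]$ over the polynomial ring $\fld[\x]=\fld[x_1,\dots,x_m]$, and then to extract the last term from the sphere/Gorenstein structure. Since $I_D$ is generated by monomials in the $x_j$ alone, we have $R/P\cong \fld[D]\otimes_{\fld[\x]}R$ with $R\cong \fld[\x]\otimes_\fld \fld[z,\theta_{i,j}]$ a polynomial, hence flat, extension of $\fld[\x]$. So, writing $G_\bullet\to\fld[D]$ for the minimal graded free resolution over $\fld[\x]$, the complex $G_\bullet\otimes_{\fld[\x]}R$ is a free resolution of $R/P$ over $R$, and it remains minimal because the entries of its differentials lie in $(x_1,\dots,x_m)\subseteq\mathfrak m_R$. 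Thus $C^{1}=G_\bullet\otimes_{\fld[\x]}R$, and a free summand $\fld[\x](-j)$ of $G_i$ becomes $R(-j,0)$ in $F_i$, the $x$-twist landing in the first component of the bidegree since no $\theta$ occurs in $I_D$.

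Next I would read off the length. As recalled from \cite[Section~5]{BH}, $\fld[D]$ is Cohen--Macaulay of Krull dimension $d$; hence $\operatorname{depth}_{\fld[\x]}\fld[D]=d$ and the Auslander--Buchsbaum formula gives $\operatorname{pd}_{\fld[\x]}\fld[D]=m-d=c$. Consequently $C^{1}$ has exactly the asserted shape $0\to F_c\to F_{c-1}\to\cdots\to F_0=R$, with $F_0=R$ because $R/P$ is a cyclic $R$-module.

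Finally, the term $F_c=R(-m,0)$: equivalently, $G_c\cong\fld[\x](-m)$. The Gorenstein hypothesis on $\fld[D]$ forces $G_c$ to have rank one, so $G_c\cong\fld[\x](-N)$ for a single integer $N$, and the point is to show $N=m$. I would do this by duality: applying $\operatorname{Hom}_{\fld[\x]}(-,\fld[\x])$ to $G_\bullet$ and using Cohen--Macaulayness of codimension $c$, the complex $\operatorname{Hom}_{\fld[\x]}(G_\bullet,\fld[\x])$ is, after the obvious shift, the minimal free resolution of $\operatorname{Ext}^c_{\fld[\x]}(\fld[D],\fld[\x])\cong\omega_{\fld[D]}(m)$, whose first term is $G_c^{\vee}$. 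Since $D$ is a simplicial sphere, $\omega_{\fld[D]}\cong\fld[D]$ (the $a$-invariant of $\fld[D]$ is $\deg h_D(t)-d=d-d=0$ by Dehn--Sommerville), so $\omega_{\fld[D]}(m)\cong\fld[D](m)$ is generated in degree $-m$; hence $G_c^{\vee}=\fld[\x](m)$, i.e.\ $G_c=\fld[\x](-m)$ and $F_c=R(-m,0)$.

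The only genuinely non-formal input, and hence the step I expect to need the most attention, is pinning down the twist $N=m$ (equivalently $a(\fld[D])=0$): this is where the sphere hypothesis enters beyond mere Gorensteinness, whereas the reduction by flat base change and the length computation are routine. An alternative, purely combinatorial way to get the twist is Hochster's formula $\beta_{c,j}(\fld[D])=\sum_{|W|=j}\dim_\fld\widetilde H^{\,j-c-1}(D|_W;\fld)$: for $j=m$ this picks up $\widetilde H^{d-1}(D;\fld)=\fld$ because $D$ is a homology $(d-1)$-sphere, and the Gorenstein property forces $\beta_c=1$, so the unique generator of $G_c$ sits in internal degree $m$.
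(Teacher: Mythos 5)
Your argument is correct and follows essentially the same route as the paper: the paper simply cites \cite[Lemma~5.6.4]{BH} for the fact that the (Artinian reduction of the) face ring of a sphere has socle degree $d=m-c$, and then invokes the discussion in \cite[p.~79]{Mi} for the standard structure of the minimal free resolution of a Gorenstein quotient (length equal to the codimension, last module free of rank one, with twist equal to codimension plus socle degree, here $c+d=m$). Your flat base change from $\fld[\x]$ to $R$, the Auslander--Buchsbaum computation of the length, and the duality argument pinning down the twist via $\omega_{\fld[D]}\cong\fld[D]$ are exactly a self-contained unwinding of those two citations, so there is nothing to object to.
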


\begin{proof} 
	By  \cite[Lemma 5.6.4]{BH} 
	we have that the socle degree of $R/P$  is equal to $ d = \dim D +1$ and 
	$d = m-c$.  The result follows by the discussion in  \cite[p.~79]{Mi}.
\end{proof}

\vspace{1.0mm}

\begin{proposition}
	$C^{2}$ is the Koszul complex on $(f_1,  \dots , f_d)$.  Hence, $C^{2}$ is
	\[0\rightarrow  \tilde{F_{d}}=R(-d,-d)\rightarrow \tilde{F}_{d-1}\rightarrow \dots \rightarrow \tilde{F_{1}}\rightarrow \tilde{F_{0}}=R\]
\end{proposition}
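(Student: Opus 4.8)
The plan is to show that $f_1,\dots,f_d$ is a homogeneous regular sequence in $R$, so that the Koszul complex $K_\bullet=K_\bullet(f_1,\dots,f_d;R)$ is acyclic and resolves $R/(f_1,\dots,f_d)$; then to observe that this Koszul resolution is in fact minimal, so it equals $C^{2}$; and finally to read off its graded twists, in particular $\tilde F_0=R$ and $\tilde F_d=R(-d,-d)$.

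\textbf{The regular sequence.} The substantive point is that $f_1,\dots,f_d$ is a regular sequence in $R=\fld[z,x_1,\dots,x_m,\theta_{i,j}]$. Note first $d\le m$, since $\fld[D]=\fld[x_1,\dots,x_m]/I_D$ is a quotient of $\fld[x_1,\dots,x_m]$ of Krull dimension $d$. Because $R$ is a polynomial ring over $\fld$, hence Cohen--Macaulay, a sequence of $d$ homogeneous elements is regular as soon as it drops the Krull dimension by $d$, so it suffices to check $\dim R/(f_1,\dots,f_d)=\dim R-d$. I would deduce this from the identity $R=R_{up}[z]$ (a flat, indeed polynomial, extension): it is enough that $f_1,\dots,f_d$ be a regular sequence in $R_{up}=\fld[x_1,\dots,x_m,\theta_{i,j}]$, which in turn follows from the dimension computations in the proof of Proposition~\ref{prop!dimformulaforrupjup} together with the fact that a permutation of a homogeneous regular sequence in a graded ring is again one (take there any homogeneous regular sequence $H_1,\dots,H_{m-d}$ of $\fld[x_1,\dots,x_m]$). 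Equivalently, one can argue directly and inductively: $f_i$ is linear in $\theta_{i,i}$ with coefficient $x_i\neq 0$, so $R_{up}/(f_1,\dots,f_k)$ embeds in a localization of $\fld[x_1,\dots,x_m,\theta_{i,j}:(i,j)\neq(1,1),\dots,(k,k)]$ and is in particular a domain, whence $f_{k+1}$ is a nonzerodivisor on it for each $k<d$.

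\textbf{Koszul complex, minimality, twists.} Granting this, the Koszul complex on $f_1,\dots,f_d$ is a free resolution of $R/(f_1,\dots,f_d)$. It is minimal: every entry of every Koszul differential is $\pm f_i$, and each $f_i$ lies in the homogeneous maximal ideal $(z,x_1,\dots,x_m,\theta_{i,j})$, being a polynomial of positive degree. Hence the Koszul complex is the minimal graded free resolution $C^{2}$. For the twists, with the bidegree convention of this subsection each $f_i$ is bihomogeneous of bidegree $(1,1)$, so placing the $i$-th Koszul generator in bidegree $(1,1)$ makes all differentials bihomogeneous; therefore the $p$-th term is $\bigwedge^p R(-1,-1)^{\oplus d}\cong R(-p,-p)^{\oplus\binom{d}{p}}$. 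In particular $\tilde F_0=R$ and $\tilde F_d=R(-d,-d)$, as stated, and the resolution has length exactly $d$.

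The only step requiring real work is establishing the regular sequence, and even this is essentially a consequence of the dimension formula already proved; everything else is the standard theory of Koszul complexes over Cohen--Macaulay rings.
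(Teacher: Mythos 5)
Your main line of argument is correct, and in fact the paper states this proposition with no proof at all (unlike the neighbouring propositions on $C^{1}$ and $C^{3}$), so you are supplying a justification the paper treats as standard. The route you take --- reduce everything to the claim that $f_1,\dots,f_d$ is a regular sequence, get that from the Cohen--Macaulay/dimension criterion by feeding a complete intersection $I=(H_1,\dots,H_{m-d})$ into Proposition~\ref{prop!dimformulaforrupjup}, permute the resulting homogeneous regular sequence to extract the initial segment $f_1,\dots,f_d$, and pass from $R_{up}$ to $R=R_{up}[z]$ by flatness --- is sound, and it is the same mechanism the paper itself uses elsewhere (e.g.\ in Corollary~\ref{cor!cohenmacaulaysubcase21} and in Subsection~\ref{subsec!completeintersectioncase}) to recognize the $f_i$ as a regular sequence. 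The statements about minimality (all Koszul differentials have entries $\pm f_i$ in the irrelevant ideal) and about the twists (each $f_i$ has bidegree $(1,1)$ for the $(x,z;\theta)$-bigrading of this subsection, so the $p$-th term is $R(-p,-p)^{\oplus\binom{d}{p}}$) are correct as well.

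One caveat: your ``equivalent'' direct argument is not quite an argument as written. The assertion that $R_{up}/(f_1,\dots,f_k)$ \emph{embeds} in a localization of $\fld[x_1,\dots,x_m,\theta_{i,j}:(i,j)\neq(1,1),\dots,(k,k)]$ is exactly the statement that $(f_1,\dots,f_k)$ is saturated with respect to $x_1\cdots x_k$; the elimination of $\theta_{i,i}$ only identifies the localization $\left(R_{up}/(f_1,\dots,f_k)\right)[(x_1\cdots x_k)^{-1}]$, and injectivity of the localization map is part of what one is trying to prove (a priori there could be components or embedded primes inside $V(x_1\cdots x_k)$). So this version is circular as stated and should either be dropped or replaced, e.g.\ by the observation that for a monomial order making $\theta_{i,i}$ dominant the initial terms $\theta_{1,1}x_1,\dots,\theta_{d,d}x_d$ involve pairwise disjoint variables and hence already form a regular sequence, which forces $f_1,\dots,f_d$ to be one. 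Since you present the dimension-count argument as the primary one, this does not affect the correctness of the proof.
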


\vspace{1.0mm}

\begin{proposition}\label{clm!tensprod}
	$C^{3}$ is the tensor product of $C^{1}$  and  $C^{2}$.
\end{proposition}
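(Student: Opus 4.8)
The plan is to establish Proposition~\ref{clm!tensprod} via a standard fact about resolutions of quotients by regular sequences, namely that tensoring a minimal free resolution of a module by a Koszul complex on a regular sequence yields a minimal free resolution of the quotient. First I would recall the setup: $P = (I_D)$ is generated by the Stanley--Reisner generators, $C^1$ is the minimal graded free resolution of $R/P$, and $C^2$ is the Koszul complex on $f_1,\dots,f_d$, which by the previous proposition resolves $R/(f_1,\dots,f_d)$. I want to show $C^1 \otimes_R C^2$ resolves $R/I = R/((I_D) + (f_1,\dots,f_d))$ minimally.

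The key step is to verify that $f_1,\dots,f_d$ form a regular sequence on $R/P$, so that the Koszul complex $C^2$ is acyclic not just over $R$ but remains a resolution after the relevant base change. Concretely, $R/P \cong \fld[x_1,\dots,x_m]/I_D \otimes_\fld \fld[z,\theta_{i,j}]$, and since $\fld[D]$ is Cohen--Macaulay of dimension $d$ (being Gorenstein, as recalled in the text) with a linear system of parameters given generically by forms like the $f_i$, one checks that $f_1,\dots,f_d$ is a regular sequence on $R/P$. Given this, a classical result (e.g. the iterated mapping-cone / depth-sensitivity argument, or \cite[Proposition~1.1.5 and the discussion of Koszul complexes]{BH}) shows that $C^1 \otimes_R C^2$ is acyclic with $H_0 = R/(P + (f_1,\dots,f_d)) = R/I$. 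Hence $C^1 \otimes_R C^2$ is a graded free resolution of $R/I$.

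Finally I would argue minimality: the differentials of $C^1 \otimes_R C^2$ have entries that are sums of products of entries of the differentials of $C^1$ and $C^2$. Since $C^1$ is a minimal resolution, all entries of its differentials lie in the graded maximal ideal $\m = (z,x_i,\theta_{i,j})$; likewise the Koszul differentials of $C^2$ have entries among the $f_i \in \m$. Because $\m$ is an ideal, every entry of the tensor-product differential lies in $\m$, so the resolution $C^1 \otimes_R C^2$ is minimal. Comparing with the uniqueness of minimal free resolutions (up to isomorphism of complexes) gives $C^3 \cong C^1 \otimes_R C^2$, which is the assertion.

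The main obstacle is the regular-sequence claim: one must ensure that the particular forms $f_i = \sum_j \theta_{i,j} x_j$ — which involve the new variables $\theta_{i,j}$ — act as a regular sequence on the extended ring $R/P = \fld[D][z,\theta_{i,j}]$. This should follow because, after extending scalars to include the $\theta_{i,j}$ (and $z$), the $f_i$ become a \emph{generic} linear system of parameters for the Cohen--Macaulay ring $\fld[D]$; genericity guarantees they form a regular sequence, and adjoining the polynomial variable $z$ preserves this. One can cite \cite[Theorem~1.5.17]{BH} (general linear combinations and regular sequences, over an infinite field, after the harmless field extension as in Step~1 of the proof of Proposition~\ref{prop!dimformulaforrupjup}) to make this precise. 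Everything else is the routine homological bookkeeping of tensoring resolutions.
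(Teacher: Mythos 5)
Your proposal is correct and is essentially an expansion of the paper's own one-line proof, which cites \cite[p.~8]{DH}, \cite[Exercise 1.1.12]{BH} and \cite[Theorem 9.4.7]{BH} for exactly the facts you spell out: $f_1,\dots,f_d$ is a regular sequence on $R/P$ (so the relevant Tor modules vanish and the tensor product of the two resolutions resolves $R/I$), and minimality is preserved because all differential entries stay in the graded maximal ideal. The only point worth tightening is the appeal to ``genericity'': the $\theta_{i,j}$ are indeterminates rather than general field elements, so the clean justification of the regular-sequence claim is the dimension count of Proposition~\ref{prop!dimformulaforrupjup} combined with the Cohen--Macaulayness of $R/P$, which is precisely how the paper itself establishes it in Sections~\ref{sec!gericpolynomialreduction} and~\ref{sec!gorensteinpolynomialreduction}.
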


\begin{proof} 
	It follows by combining \cite[p.~ 8]{DH}  with \cite[Exercise 1.1.12]{BH} and  \cite[Theorem 9.4.7]{BH}.
\end{proof}

\vspace{1.0mm}

Hence, by Proposition~\ref{clm!tensprod} it follows that

\vspace{1.0mm}

\begin{proposition}
	$C^{3}$ is
	\[0\rightarrow  \hat{F_{m}}=R(-m-d,-d)\rightarrow \hat{F}_{m-1}\rightarrow \dots \rightarrow \hat{F_{1}}\rightarrow \hat{F_{0}}=R\]
\end{proposition}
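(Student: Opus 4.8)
The plan is to read everything off the tensor‑product description already recorded in Proposition~\ref{clm!tensprod}, together with the explicit shapes of $C^1$ and $C^2$ established in the preceding propositions. Recall that the total complex of a tensor product of complexes of free $R$‑modules has $k$‑th term
\[
(C^1\otimes_R C^2)_k \;=\; \bigoplus_{i+j=k} F_i\otimes_R \tilde F_j ,
\]
with differential assembled from those of $C^1$ and $C^2$ with the usual Koszul sign. Since $F_i=0$ unless $0\le i\le c=m-d$ and $\tilde F_j=0$ unless $0\le j\le d$, the tensor product is concentrated in homological degrees $0,1,\dots,c+d=m$. By Proposition~\ref{clm!tensprod} this complex is $C^3$, so $C^3$ has exactly the displayed length $m$.

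It then remains only to identify the two extremal free modules. For $k=m$ the condition $i+j=m$ with $i\le c$ and $j\le d$ has the unique solution $(i,j)=(c,d)$, so $\hat F_m=F_c\otimes_R\tilde F_d$; using $F_c=R(-m,0)$ and $\tilde F_d=R(-d,-d)$ together with the identity $R(a_1,a_2)\otimes_R R(b_1,b_2)\cong R(a_1+b_1,a_2+b_2)$ for the bigrading on $R$, this gives $\hat F_m\cong R(-m-d,-d)$. Likewise $k=0$ forces $(i,j)=(0,0)$, hence $\hat F_0=F_0\otimes_R\tilde F_0=R\otimes_R R=R$. This is precisely the asserted shape of $C^3$.

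There is essentially no obstacle here: granting Proposition~\ref{clm!tensprod} and the two preceding structural propositions, the argument is pure bookkeeping. The one point worth stating explicitly, and the only place where the hypotheses genuinely enter, is the validity of Proposition~\ref{clm!tensprod} itself: since $C^2$ is the Koszul complex on $f_1,\dots,f_d$ and these form a regular sequence on $R/P$ (the ring $\fld[D]$ stays Gorenstein after adjoining the $\theta_{i,j}$ and $z$ and killing the $f_i$, as in Section~\ref{sec!gorensteinpolynomialreduction}), one has $\mathrm{Tor}^R_{>0}\bigl(R/P,\,R/(f_1,\dots,f_d)\bigr)=0$, so $C^1\otimes_R C^2$ resolves $R/P\otimes_R R/(f_1,\dots,f_d)=R/I$, and it is minimal because the entries of the differentials of both $C^1$ and $C^2$ lie in the irrelevant maximal ideal, hence so do those of the total differential. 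The only mild care needed is to keep the bigrading conventions consistent when composing the two shifts $(-m,0)$ and $(-d,-d)$.
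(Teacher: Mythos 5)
Your argument is correct and is essentially the paper's own: the paper derives this proposition directly from Proposition~\ref{clm!tensprod} by the same bookkeeping (length $c+d=m$, top term $F_c\otimes_R\tilde F_d=R(-m,0)\otimes_R R(-d,-d)=R(-m-d,-d)$, bottom term $R$). Your added justification that the tensor product is indeed a (minimal) resolution via Tor-vanishing for the regular sequence $f_1,\dots,f_d$ on $R/P$ is exactly the content the paper delegates to the citations in the proof of Proposition~\ref{clm!tensprod}, so nothing is missing.
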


\begin{proposition}
	$C^{4}$ is the Koszul complex on $(x_1,\dots, x_m, z)$.   Hence, $C^{4}$ is
	\[0\rightarrow  \bar{F}_{m+1}=R(-m-1, 0)\rightarrow \bar{F}_{m}=(R(-m, 0))^{m+1}\rightarrow \dots \rightarrow \bar{F_{1}}\rightarrow \bar{F_{0}}=R\]
\end{proposition}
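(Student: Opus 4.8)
The statement to prove is that the minimal graded free resolution $C^4$ of $R/J$ is the Koszul complex on the regular sequence $(x_1,\dots,x_m,z)$, and in particular that $\bar F_{m+1} = R(-m-1,0)$ with the intermediate terms as shown. The plan is straightforward: verify that $(x_1,\dots,x_m,z)$ is a regular sequence in $R$, invoke the standard fact that the Koszul complex on a regular sequence is a minimal free resolution of the quotient, and then read off the graded twists from the bidegrees of the generators.

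First I would recall that $R = \fld[z,x_1,\dots,x_m,\theta_{i,j}]$ is a polynomial ring, hence Cohen–Macaulay (indeed regular), and that $J = (x_1,\dots,x_m,z)$ is generated by $m+1$ of the $m+1+dm$ variables. A subset of the variables of a polynomial ring is always a regular sequence, so $(x_1,\dots,x_m,z)$ is a regular sequence of length $m+1$ in $R$. By the standard theory of Koszul complexes over a Noetherian ring (e.g. \cite[Theorem 1.6.14, Corollary 1.6.19]{BH}), the Koszul complex $K_\bullet(x_1,\dots,x_m,z;R)$ is acyclic and provides a free resolution of $R/J$; moreover, since all the generators lie in the irrelevant maximal ideal (they have positive degree in the grading by $\deg x_i = \deg z = 1$), the differentials have entries in the maximal ideal, so this resolution is minimal. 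Thus $C^4 = K_\bullet(x_1,\dots,x_m,z;R)$, which has length $m+1$, confirming the shape of the complex.

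Next I would compute the graded (bigraded) twists. Each of $x_1,\dots,x_m,z$ has bidegree $(1,0)$ in the bigrading on $R$ introduced above (degree $1$ in the $x_i,z$ variables and degree $0$ in the $\theta_{i,j}$). The $k$-th term of the Koszul complex is $\bigwedge^k R^{m+1}$, which as a bigraded module is $\bigoplus R(-k,0)$ over all $k$-subsets of the $m+1$ generators, hence isomorphic to $(R(-k,0))^{\binom{m+1}{k}}$. For $k = m+1$ there is a single such subset, giving $\bar F_{m+1} = R(-(m+1),0) = R(-m-1,0)$; for $k=m$ there are $\binom{m+1}{m} = m+1$ subsets, giving $\bar F_m = (R(-m,0))^{m+1}$, exactly as asserted. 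The intermediate terms $\bar F_1,\dots,\bar F_{m-1}$ are left unspecified in the statement and need no further comment.

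I do not anticipate a genuine obstacle here: the only thing to be a little careful about is matching the bidegree conventions so that the twist on $\bar F_{m+1}$ comes out as $(-m-1,0)$ rather than, say, $(-m-1,\text{something})$ — but since none of the defining elements of $J$ involves the $\theta_{i,j}$, the $\theta$-degree of every twist is $0$, and this is immediate. The main point is simply the identification of $J$ as an ideal generated by a subset of the variables, after which everything follows from the textbook properties of the Koszul resolution.
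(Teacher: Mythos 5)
Your proof is correct and is exactly the standard argument the paper has in mind: the authors state this proposition without proof, treating it as the textbook fact that the Koszul complex on a regular sequence of variables is the minimal free resolution of the quotient. Your verification of the bidegree twists (each of $x_1,\dots,x_m,z$ having bidegree $(1,0)$, so all twists have $\theta$-degree $0$) is consistent with the bigrading the paper sets up in this subsection.
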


\begin{proposition}\label{clm56!who}
	Since 
	\[I  \subset J  \subset    R\]
	there is a natural projection map
	\[R/I \rightarrow R/ J. \]
	It induces a chain map $C^{3}\rightarrow C^{4}$.
	Looking at 
	\begin{tikzcd}
		\hat{F_{m}}\simeq R \arrow[r, "h"]
		&  \bar{F}_{m}\simeq R^{m+1}
	\end{tikzcd}
	each entry of the matrix of  $h$ has bidegree $(d,d)$.
\end{proposition}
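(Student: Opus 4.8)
The plan is to compute the bidegrees along the chain map $C^3 \to C^4$ induced by the projection $R/I \to R/J$, working from the top of the resolutions downward. I would begin by recalling the two resolutions explicitly: the top term of $C^3$ is $\hat F_m = R(-m-d,-d)$ (by the previous proposition, coming from the tensor product $C^1 \otimes C^2$ whose top term is $R(-m,0)\otimes R(-d,-d) = R(-m-d,-d)$), while the top nonzero term of $C^4$, the Koszul complex on $(x_1,\dots,x_m,z)$, is $\bar F_{m+1} = R(-m-1,0)$, and the term in homological degree $m$ is $\bar F_m = (R(-m,0))^{m+1}$. The induced chain map is a map of complexes lifting $R/I \twoheadrightarrow R/J$, so in each homological degree $k$ it is a map $\hat F_k \to \bar F_k$ of free $R$-modules, represented by a matrix whose entries are bihomogeneous elements of $R$.

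Next I would pin down the shift. A degree-$0$ map of bigraded modules $R(-a,-b) \to R(-c,-d)$ is given by multiplication by a bihomogeneous element of bidegree $(a-c, b-d)$ (this is just the standard fact that $\operatorname{Hom}_R(R(-a,-b),R(-c,-d))_{0,0} = R_{a-c,b-d}$, provided $a\ge c$ and $b\ge d$; otherwise it is zero). In homological degree $m$ the map $\varrho_m : \hat F_m \to \bar F_m$ sends $R(-m-d,-d)$ to $(R(-m,0))^{m+1}$, so each of its $m+1$ components is multiplication by a bihomogeneous element of bidegree $\big((m+d)-m,\ d-0\big) = (d,d)$. This is exactly the claimed bidegree for the entries of the matrix of $h$, and the identification $h = \varrho_m(1)$ matches the notation set up in the preceding tikzcd diagram.

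The one point that needs genuine justification, rather than bookkeeping, is \emph{why} such a chain map exists with $\hat F_m$ mapping into $\bar F_m$ at all — i.e.\ that the comparison map of minimal free resolutions is nontrivial in homological degree $m$ and lands in the correct term. For this I would invoke the standard comparison theorem for projective resolutions: any map $R/I \to R/J$ lifts to a chain map $C^3 \to C^4$, unique up to homotopy; since both $R/I$ and $R/J$ are Gorenstein (by Proposition~\ref{prop!goodunprpair} / the discussion in Section~\ref{sec!degeneratekmunprojection}, $J/I$ being codimension one in $R/I$), their resolutions have lengths $m$ and $m+1$ respectively, and the long exact sequence $(\ref{eqn!sesforunprno1})$ guarantees the cokernel map on the top — dualizing — is the Poincaré residue, forcing $\varrho_m$ to be nonzero. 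The main obstacle is thus not the degree count, which is forced once the shifts in the resolutions are recorded, but making sure the resolutions $C^1,\dots,C^4$ are correctly graded so that the tensor-product description of $C^3$ yields precisely $R(-m-d,-d)$ on top; once that is in hand, the bidegree $(d,d)$ drops out immediately by subtracting the internal shifts of source and target.
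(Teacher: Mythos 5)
Your argument is correct and is exactly the (unwritten) argument the paper intends: the proposition follows immediately from the graded shifts recorded in the preceding propositions, since a degree-preserving component $\hat F_m = R(-m-d,-d)\to \bar F_m=(R(-m,0))^{m+1}$ must have entries of bidegree $\bigl((m+d)-m,\,d-0\bigr)=(d,d)$. The existence of the lift is the standard comparison theorem, as you say; the nonvanishing of $\varrho_m$ is not part of this statement and is handled separately in the paper (Propositions~\ref{clm7!qual} and~\ref{prop!iogsdg}), so your extra discussion there is harmless but not needed.
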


\begin{proposition}\label{clm6!equal}
	If $\tau$ is an ordered facet of $D$, then there exists $\epsilon\in \{-1,1\}$ such that
	\[
	h - \epsilon[\tau]x_{\tau}  
	\]
	is an element of the ideal $(f_1, \dots , f_d)$  of $R$.
\end{proposition}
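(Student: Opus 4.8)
The plan is to compute the top component of a conveniently chosen comparison map $\varrho\colon C^{3}\to C^{4}$ lifting $R/I\to R/J$ (the one underlying Proposition~\ref{clm56!who}) and to match it with $[\tau]x_{\tau}$ modulo $(f_{1},\dots,f_{d})$. First I would simplify the target: since $I=(I_{D})+(f_{1},\dots,f_{d})\subseteq(x_{1},\dots,x_{m})\subseteq J$, the projection $R/I\to R/J$ factors through $R/(x_{1},\dots,x_{m})$, so $\varrho$ may be taken to factor as $C^{3}\to K\hookrightarrow C^{4}$, where $K=K(x_{1},\dots,x_{m};R)$ is the Koszul resolution of $R/(x_{1},\dots,x_{m})$ and $K\hookrightarrow C^{4}=K\otimes_{R}K(z)$ is $u\mapsto u\otimes 1$. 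For this choice the matrix of $\varrho_{m}\colon \hat F_{m}\cong R\to \bar F_{m}$ has a single nonzero entry, namely the multiplier of the top map $\hat F_{m}\to K_{m}\cong R$; it involves no $z$ and is bihomogeneous of bidegree $(d,d)$. This is the element $h$.

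To compute $h$ I would use the tensor decomposition $C^{3}=C^{1}\otimes_{R}C^{2}$ of Proposition~\ref{clm!tensprod}, where $C^{2}=K(f_{1},\dots,f_{d};R)$, realizing the comparison map $C^{3}\to K$ as the composite
\[
C^{1}\otimes_{R}K(f_{1},\dots,f_{d};R)\ \xrightarrow{\ \gamma\otimes\beta\ }\ K\otimes_{R}K\ \xrightarrow{\ \mu\ }\ K ,
\]
where $\beta$ is the Koszul comparison map attached to the relations $f_{i}=\sum_{j}\theta_{i,j}x_{j}$, i.e.\ the DG-algebra map sending the $i$-th exterior generator of $K(f_{1},\dots,f_{d};R)$ to $\sum_{j}\theta_{i,j}e_{j}$; $\gamma\colon C^{1}\to K$ is a comparison map over $R/(I_{D})\to R/(x_{1},\dots,x_{m})$; and $\mu$ is the multiplication of the Koszul DG-algebra $K$. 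The essential input is that the top of $\beta$ is $\beta_{d}(1)=\sum_{|\tau|=d}[\tau]\,e_{\tau}$, the vector of $d\times d$ minors of $(\theta_{i,j})$, with $[\tau]=\det(\theta_{i,b_{j}})$ exactly as in \eqref{eqn!definitionofu}. Since $\hat F_{m}=C^{1}_{c}\otimes C^{2}_{d}$ with $c=m-d$, and since $e_{\rho}\wedge e_{\tau}=0$ in $K$ unless $\tau$ is the complementary $d$-set $\bar\rho$, evaluating the composite at the top gives a formula
\[
h\ =\ \sum_{|\tau|=d}\ \pm\,g_{\bar\tau}\,[\tau]\qquad\text{in }R_{up},
\]
where $\gamma_{c}(1)=\sum_{|\rho|=m-d}g_{\rho}\,e_{\rho}$ is the top of $\gamma$, each $g_{\rho}$ being a polynomial of degree $d$ in the $x_{i}$.

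It then remains — and here I expect the main obstacle to lie — to reduce this sum modulo $(f_{1},\dots,f_{d})$ to a single term $\epsilon\,[\tau_{0}]\,x_{\tau_{0}}$ for each facet $\tau_{0}$. This is the point at which the hypothesis that $D$ is a simplicial sphere is used essentially. Since $D$ is Gorenstein, $C^{1}$ is self-dual and, by Hochster's formula, its last free module $C^{1}_{c}=R(-m,0)$ corresponds to the fundamental class in $\widetilde H_{d-1}(D;\fld)$; one uses this to normalize $\gamma$ so that each $g_{\bar\tau}$ is a scalar multiple of the monomial $x_{\tau}$, whence $g_{\bar\tau}=0$ whenever $\tau$ is not a face, and — $D$ being pure of dimension $d-1$ — only the facets survive, giving $h=\sum_{\tau\ \text{a facet}}\pm\lambda_{\tau}\,[\tau]\,x_{\tau}$. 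One finally checks that, modulo $(f_{1},\dots,f_{d})$, these facet-summands are coherently proportional and collapse to one; the proportionality across a shared ridge is produced by the Koszul syzygies among $f_{1},\dots,f_{d}$ — i.e.\ by the complementary tensor factor $C^{2}=K(f_{1},\dots,f_{d};R)$, whose differential has entries in $(f_{1},\dots,f_{d})$ — combined with the pseudomanifold property of $D$. The genuinely delicate part is the sign bookkeeping in this collapse together with the verification that $\gamma$ can be normalized so that $g_{\bar\tau}$ is a scalar times $x_{\tau}$; this is the combinatorial shadow of the fact that the volume map carries a canonical integral normalization, the theme of the paper.
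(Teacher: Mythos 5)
There is a genuine gap at the step that carries all the content. A preliminary remark: in Proposition~\ref{clm6!equal} the symbol $h$ is the explicit element $[\sigma]x_{\sigma}$ fixed in \eqref{eqn!definitionofu}, not the top component of the comparison map of Proposition~\ref{clm56!who} (the letter $h$ is unfortunately reused there). Your detour through $C^{3}\to C^{4}$ could in principle still yield the claim, since identifying the comparison element with $\epsilon_{\tau}[\tau]x_{\tau}$ modulo the linear forms for \emph{every} facet $\tau$ would give $[\sigma]x_{\sigma}\equiv\pm[\tau]x_{\tau}$. But your final ``collapse'' of $\sum_{\tau}\pm\lambda_{\tau}[\tau]x_{\tau}$ to a single term is then precisely the assertion being proved (that the summands $[\tau]x_{\tau}$ are pairwise congruent up to sign), and the mechanism you invoke for it --- Koszul syzygies among $f_{1},\dots,f_{d}$ --- does not produce it. What does produce it is Cramer's rule: writing $\Theta_{\sigma}$ for the $d\times d$ submatrix of $(\theta_{i,j})$ with columns indexed by the facet $\sigma$ and $v\in\sigma$, multiplying $\Theta_{\sigma}(x_{b_{1}},\dots,x_{b_{d}})^{t}=(f_{1},\dots,f_{d})^{t}-\sum_{k\notin\sigma}(\theta_{i,k}x_{k})_{i}$ by the adjugate of $\Theta_{\sigma}$ gives $[\sigma]x_{v}\equiv-\sum_{k\notin\sigma}[\sigma|_{v\to k}]x_{k}$ modulo $(f_{1},\dots,f_{d})$, where $[\sigma|_{v\to k}]$ is obtained from $[\sigma]$ by replacing the column of $v$ with that of $k$. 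Multiplying by $x_{\sigma\setminus\{v\}}$, every term with $(\sigma\setminus\{v\})\cup\{k\}$ a nonface dies \emph{because that monomial lies in $I_{D}$}, and the pseudomanifold property leaves exactly the one facet $\tau$ adjacent across the ridge $\sigma\setminus\{v\}$, giving $[\sigma]x_{\sigma}\equiv\pm[\tau]x_{\tau}$; strong connectedness of the sphere finishes. This is the content of \cite[Corollary~4.5]{PP}, which is all the paper's proof consists of.

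The same computation shows why your target ideal is too small. The adjacent-facet relation holds modulo $(I_{D})+(f_{1},\dots,f_{d})$, not modulo $(f_{1},\dots,f_{d})$ alone: for the boundary of the square ($m=4$, $d=2$, $\sigma=(1,2)$, $\tau=(2,3)$), specializing the $\theta_{i,j}$ generically makes the quotient by $(f_{1},f_{2})$ a polynomial ring in two variables in which $\bar{x}_{2}([\sigma]\bar{x}_{1}\mp[\tau]\bar{x}_{3})$ does not vanish, so $[\sigma]x_{\sigma}\mp[\tau]x_{\tau}\notin(f_{1},f_{2})$. The proposition is only ever used (in Proposition~\ref{clm75!qual}) through membership in $I_{up}=(I_{D})+(f_{1},\dots,f_{d})$, and that is what one should prove; a route that, like yours, tries to stay inside $(f_{1},\dots,f_{d})$ cannot succeed. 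Finally, even granting the reduction, your normalization of $\gamma$ --- that its top component is the fundamental cycle $\sum_{\tau}\pm x_{\tau}e_{\bar{\tau}}$ supported on facets only --- is itself a nontrivial fact about the last syzygies of a Gorenstein face ring that would need a separate proof.
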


\begin{proof}
	See   \cite[Corollary 4.5]{PP}.
\end{proof}

\begin{proposition}\label{clm7!qual} 
	The element  $h+I$ of  $R/I$  is nonzero.
\end{proposition}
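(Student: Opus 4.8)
The plan is to reduce the statement to the non-vanishing of a single facet monomial in the generic Artinian reduction $\AR$, and then to deduce that from Proposition~\ref{clm6!equal} together with the Hilbert function of $\AR$. For the reduction, note first that the ideal $I$ of $R=R_{up}[z]$ is generated by $I_D\cup\{f_1,\dots,f_d\}$, all of whose members lie in $R_{up}$; hence $R/I\cong (R_{up}/I_{up})[z]$ and, the structure map $R_{up}/I_{up}\to (R_{up}/I_{up})[z]$ being injective, $h+I=0$ in $R/I$ if and only if $h\in I_{up}$. Since $R_{down}=S^{-1}R_{up}$ and $I_{down}=S^{-1}I_{up}$, this would force $h\in I_{down}$. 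Now $h=[\sigma]x_\sigma$, and $[\sigma]=\det(\theta_{i,b_j})_{1\le i,j\le d}$ is the determinant of a matrix of pairwise distinct indeterminates, hence a nonzero element of $\fld[\theta_{i,j}]$ and thus a unit of $E$; so $h\in I_{down}$ is equivalent to the vanishing of the image $\bar x_\sigma$ of $x_\sigma$ in $\AR$. Thus it suffices to prove $\bar x_\sigma\neq 0$ in $\AR$. Here one uses that $\fld[D]$ is Cohen--Macaulay (Reisner) and Gorenstein of Krull dimension $d$, that $f_1,\dots,f_d$ have indeterminate coefficients and hence form a regular sequence on $\fld[D]\otimes_\fld E$, and that the $a$-invariant of $\fld[D]$ is $-d$, so that $\AR$ is an Artinian Gorenstein standard graded $E$-algebra with $\dim_E\AR_d=h_d(D)=1$.

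The main step is the claim that $\AR_d$ is spanned over $E$ by the facet monomials $\bar x_\tau$, $\tau$ a facet of $D$. Since $\AR$ is generated in degree $1$, $\AR_d$ is spanned by the images of degree-$d$ monomials $x^a$ with $\supp x^a\in D$, and if $\abs{\supp x^a}=d$ then $x^a$ is squarefree with $\supp x^a$ a facet. If $\abs{\supp x^a}<d$, choose $i\in G:=\supp x^a$ with $a_i\ge 2$ and write $x^a=x_iw$, so $\deg w=d-1$ and $\supp w=G$. For each $k\in\{1,\dots,d\}$ the identity $\overline{f_k}=0$ in $\AR$ gives $\sum_{j\in G}\theta_{k,j}\,\overline{x_jw}=-\sum_{j\in\Lk_D(G)}\theta_{k,j}\,\overline{x_jw}$. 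The matrix $(\theta_{k,j})_{1\le k\le d,\;j\in G}$ is a generic $d\times\abs{G}$ matrix over $E$ with $\abs{G}<d$, hence injective, so it admits a left inverse over $E$; applying it writes each $\overline{x_jw}$ with $j\in G$ — in particular $\bar x^a=\overline{x_iw}$ — as an $E$-linear combination of the elements $\overline{x_jw}$ with $j\in\Lk_D(G)$, each of which is the image of a degree-$d$ monomial whose support $G\cup\{j\}$ strictly contains $G$. Iterating this at most $d-\abs{G}$ times, until the support has $d$ elements, expresses $\bar x^a$ as an $E$-linear combination of facet monomials.

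Finally, suppose $\bar x_\sigma=0$ in $\AR$. By Proposition~\ref{clm6!equal}, for every facet $\tau$ of $D$ there is $\epsilon\in\{-1,1\}$ with $[\sigma]x_\sigma\equiv\epsilon[\tau]x_\tau$ modulo $(f_1,\dots,f_d)$, hence $\bar x_\tau=\epsilon([\sigma]/[\tau])\,\bar x_\sigma=0$ in $\AR$. Combined with the main step this gives $\AR_d=0$, contradicting $\dim_E\AR_d=1$; therefore $\bar x_\sigma\neq 0$ and, by the reductions above, $h+I\neq 0$. I expect the only genuine work to be the main step — the elimination of the non-squarefree top-degree monomials modulo the generic linear system of parameters; the reductions are formal, and Proposition~\ref{clm6!equal} together with the Gorenstein structure of $\AR$ are already in hand (one could alternatively just cite the corresponding computation in \cite{PP}).
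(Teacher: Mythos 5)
Your argument is correct, but it is substantially more self-contained than the paper's. The reduction you perform at the start --- $R/I\cong(R_{up}/I_{up})[z]$, so the nonvanishing of $h+I$ follows from the nonvanishing of $h+I_{down}$ in $\AR=R_{down}/I_{down}$, which (since $[\sigma]$ is a nonzero polynomial in distinct indeterminates, hence a unit of $E$) amounts to $\bar x_\sigma\neq 0$ --- is exactly the paper's reduction; but at that point the paper simply cites \cite[p.~6]{PP} for the nonvanishing of $[\sigma]x_\sigma+I_{down}$ and stops. You instead reprove that input from scratch: you show that $\AR_d$ is spanned over $E$ by facet monomials, by eliminating non-squarefree degree-$d$ monomials against the generic linear system of parameters (the key point being that a $d\times\lvert G\rvert$ matrix of distinct indeterminates with $\lvert G\rvert<d$ has full column rank over $E$, hence a left inverse, and each elimination step strictly enlarges the support), and you then combine Proposition~\ref{clm6!equal} with $\dim_E\AR_d=h_d(D)=1$ to conclude that the vanishing of one facet monomial would force $\AR_d=0$. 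Each step checks out, including the iteration terminating at squarefree monomials (a degree-$d$ monomial with support of size $d$ is automatically squarefree). What your route buys is independence from the computation in \cite{PP}, at the cost of length; you correctly note the alternative of citing it, which is what the paper does. One small slip that affects nothing: the $a$-invariant of the face ring of a $(d-1)$-sphere is $0$, not $-d$; the fact you actually use is only that $h_d(D)=h_0(D)=1$ by Dehn--Sommerville, so that $\AR_d\neq 0$.
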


\begin{proof} We use the notations  $R_{down}= \fld (\theta_{i,j})[x_1, \dots , x_m] , 
	I_{down} = (I_D) + (f_1, \dots , f_d) \subset  R_{down}$  
	defined in Section~\ref{sec!gericpolynomialreduction}.
	By \cite[p.~6]{PP},    $h+ I_{down}$ is a nonzero element of   $R_{down}/I_{down}$.   This implies that the 
	element  $h+I$ of  $R/I$  is nonzero. 
\end{proof}

\begin{proposition}\label{clm75!qual} 
	We have   $h \in (I_{up} :(x_1,  \dots, x_m))$.
\end{proposition}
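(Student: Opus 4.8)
The plan is to show that $h = [\sigma] x_\sigma$ multiplied by each variable $x_k$ lands in $I_{up}$. Recall $I_{up} = (I_D) + (f_1, \dots, f_d)$, so it suffices to prove $x_k \cdot [\sigma] x_\sigma \in (I_D) + (f_1, \dots, f_d)$ for every $k$ with $1 \le k \le m$. First I would dispose of the easy case: if $k \notin \sigma$, then $\{k\} \cup \sigma$ contains a minimal non-face of $D$ (because $\sigma$ is a facet, hence a maximal face, so adding any outside vertex produces a non-face), and therefore the monomial $x_k x_\sigma$ already lies in the Stanley--Reisner ideal $I_D$; multiplying by the scalar-in-$\theta$ factor $[\sigma]$ keeps it in $(I_D) \subseteq I_{up}$.

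The substantive case is $k \in \sigma$, say $k = b_1$ after reindexing. Here the plan is to exploit the linear system of parameters relations $f_i = \sum_j \theta_{i,j} x_j \in I_{up}$ together with a Cramer's-rule / cofactor-expansion identity. Concretely, write $\sigma = (b_1, \dots, b_d)$ and let $A$ be the $d \times d$ matrix with $(i,j)$-entry $\theta_{i,b_j}$, so $[\sigma] = \det A$. Working modulo $(I_D)$, all monomials $x_j x_\sigma$ with $j \notin \sigma$ vanish (as above), so modulo $(I_D)$ we have $f_i x_\sigma \equiv \sum_{j=1}^d \theta_{i,b_j} x_{b_j} x_\sigma$. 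Since each $f_i \in I_{up}$, the element $f_i x_{b_1}^{-1} x_\sigma$ — more precisely, the element obtained from $x_{b_2} \cdots x_{b_d}$ — I would instead argue as follows: consider $\sum_{i=1}^d C_{i1} f_i \cdot (x_{b_2} \cdots x_{b_d})$ where $C_{i1}$ is the $(i,1)$-cofactor of $A$. This lies in $I_{up}$ because each $f_i$ does. Expanding and reducing modulo $(I_D)$, the terms $\theta_{i,b_j} x_{b_j} x_{b_2}\cdots x_{b_d}$ for $j \geq 2$ contain a repeated variable only when... no — I would rather use the standard fact (which is essentially how one proves face rings are Cohen--Macaulay via l.s.o.p.'s) that $\det A \cdot x_\sigma \equiv \pm \Delta$-type cofactor combinations. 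The cleanest route: cite or re-derive that modulo $(I_D) + (f_1,\dots,f_d)$, for $k = b_1 \in \sigma$, $x_{b_1} \cdot x_{b_2}\cdots x_{b_d} \cdot \det A$ can be rewritten, via substituting $x_{b_1} = -(\det A)^{-1}(\text{cofactor terms})\cdot(\text{other }x_{b_j})$ — but to stay polynomial, multiply through by $\det A$: the identity $\det A \cdot x_{b_1} \equiv -\sum_{j=2}^{d}(\text{signed minor}) x_{b_j} \pmod{(I_D)+(f_i)}$ holds by Cramer, and then $\det A \cdot x_{b_1} \cdot x_\sigma$ picks up a repeated $x_{b_j}$ factor... no, $x_{b_1}x_\sigma = x_{b_1}^2 x_{b_2}\cdots x_{b_d}$ already has the repeated variable built in.

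So the actual mechanism is simpler than I first wrote: the point is that $x_{b_1} x_\sigma = x_{b_1}^2 \prod_{j \ge 2} x_{b_j}$ has $x_{b_1}$ to the second power, and I claim $\det A \cdot x_{b_1}^2 \prod_{j\ge2}x_{b_j} \in (I_D)+(f_1,\dots,f_d)$. Using the cofactor expansion, $\det A \cdot x_{b_1} \equiv \sum_{i=1}^d C_{i1} f_i - \sum_{j=2}^d M_{1j} x_{b_j}$ modulo $(I_D)$, where $M_{1j}$ are signed $(d{-}1)$-minors and the $f_i$-part is in $I_{up}$; hence modulo $I_{up}$, $\det A \cdot x_{b_1} \equiv -\sum_{j=2}^d M_{1j} x_{b_j}$. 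Multiplying both sides by $x_{b_1}\prod_{j\ge2}x_{b_j}$: the left side is $\det A \cdot x_{b_1}x_\sigma$, and the right side is $-\sum_{j=2}^d M_{1j} x_{b_j}^2 \prod_{\ell\ge2, \ell\ne j} x_{b_\ell} \cdot x_{b_1}$ — each such monomial is divisible by $x_{b_1}x_{b_2}\cdots x_{b_d}$ with one variable squared, still the facet monomial times an extra variable, so it does NOT automatically vanish. This means I should instead invoke Proposition~\ref{clm6!equal}: $h = [\sigma]x_\sigma \equiv \epsilon[\tau] x_\tau \pmod{(f_1,\dots,f_d)}$ for \emph{any} ordered facet $\tau$. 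The main obstacle, and the real content, is this: combine Proposition~\ref{clm6!equal} with the observation that for $k \in \sigma$ one can choose a facet $\tau$ of $D$ \emph{not} containing $k$ (possible for $d \ge 1$ and $D$ a sphere with more than $d$ vertices — a sphere is not a simplex), so that $x_k x_\tau \in I_D$ by maximality of $\tau$; then $x_k h \equiv \epsilon [\tau] x_k x_\tau \equiv 0 \pmod{(I_D)+(f_1,\dots,f_d)} = I_{up}$. For $k \notin \sigma$ use $\tau = \sigma$ directly. Thus $x_k h \in I_{up}$ for all $k$, i.e. $h \in (I_{up} : (x_1,\dots,x_m))$, completing the proof. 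The one subtlety to nail down is the existence, for each vertex $k$, of a facet avoiding $k$ — equivalently that $\{k\}$ is not contained in every facet, which holds because a simplicial sphere of dimension $d-1 \ge 0$ has at least $d+2$ vertices (indeed the boundary of a simplex has $d+1$ vertices but is not used here since then $D$ would be a sphere — one checks the link of $k$ is a sphere of dimension $d-2$, so $D \ne \overline{\{k\}} * \lk(k)$ forces a facet off $k$); I would state this as a short lemma or cite the relevant fact about simplicial spheres.
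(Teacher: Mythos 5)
Your final argument --- discard the Cramer's-rule detours, invoke Proposition~\ref{clm6!equal} to replace $h$ by $\epsilon[\tau]x_\tau$ modulo $I_{up}$ for a facet $\tau$ avoiding the given vertex $k$, and then use $x_k x_\tau \in I_D$ --- is exactly the paper's proof, so the proposal is correct and takes the same route. Your closing remark about needing, for each vertex, a facet that avoids it is a legitimate point the paper leaves implicit; it holds because a $(d-1)$-sphere with $d-1\geq 1$ cannot be a cone over a vertex (nonvanishing top homology), so not every facet contains $k$.
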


\begin{proof}
	Assume $1 \leq j \leq m$.  We choose a facet $\tau$ of $D$ such that $j$ is not an element of $\tau$.
	Using Proposition~\ref{clm6!equal}, there exists $\epsilon\in \{-1,1\}$ such that
	\[
	h - \epsilon[\tau]x_{\tau}    \in  I_{up}.
	\]
	Moreover,  since  $j$ is not an element of the facet $\tau$, we have  that
	$\tau \cup \{j \}$ is not a face of $D$, hence
	\[
	x_{\tau}x_{j}  \in I_D \subset I_{up}.
	\]
	Hence,  working on the quotient ring $R/I_{up}$ we have
	\[
	h x_j +  I_{up}  =   \epsilon [\tau]x_{\tau}x_{j} + I_{up} 
	=  0 + I_{up}.
	\]
	This finishes the proof of the proposition.
\end{proof}

\begin{proposition}   \label{prop!iogsdg}
	The map $T_{h}$ together with the inclusion 
	$i$ generate the $R/I$-module $\operatorname{Hom}_{R/I}(J/I, R/I)$.
\end{proposition}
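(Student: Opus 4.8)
The plan is to exhibit a concrete chain map lift of the projection $R/I \to R/J$ and read off the top component. First, I would record the setup from Proposition~\ref{clm56!who}: the minimal free resolutions $C^3$ of $R/I$ and $C^4$ of $R/J$ are both eventually a single copy of a shifted $R$ (namely $\hat F_m \cong R(-m-d,-d)$ and $\bar F_{m+1} \cong R(-m-1,0)$), and a chain map $\psi_\bullet : C^3 \to C^4$ lifting the projection exists because $C^4$ is a resolution. The map $\psi_m : \hat F_m \to \bar F_m \cong (R(-m,0))^{m+1}$ is multiplication by a column vector whose entries are bihomogeneous of bidegree $(d,d)$. The key structural input is the general theory of Kustin-Miller unprojection recalled in Section~\ref{sec!degeneratekmunprojection} together with the appendix: for a codimension-one degenerate unprojection pair $(J/I, R/I)$, the unprojection homomorphism $\phi \in \operatorname{Hom}_{R/I}(J/I, R/I)$ that, together with the inclusion $i$, generates $\operatorname{Hom}_{R/I}(J/I, R/I)$, can be computed from exactly this comparison map of resolutions --- indeed $\phi(z+I)$ is (up to units and modulo $I$) the entry of $\psi_m$ corresponding to the generator $z$ of $J$, and $\phi$ is built from the column of $\psi_m$. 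This is precisely the content that "$\vo'$ and $\vo''$ coincide" in the theorem of Subsection~\ref{ssc:oncemore}.

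Next I would identify that comparison entry with $h = [\sigma] x_\sigma$. Because $C^3 = C^1 \otimes C^2$ (Proposition~\ref{clm!tensprod}) with $C^2$ the Koszul complex on $(f_1,\dots,f_d)$, the top map $\hat F_m \to \hat F_{m-1}$ factors through the Koszul differentials, so the comparison map's top entry is determined modulo $(f_1,\dots,f_d)$. By Proposition~\ref{clm6!equal}, for any ordered facet $\tau$ of $D$ we have $h \equiv \pm [\tau] x_\tau \pmod{(f_1,\dots,f_d)}$, and these facet monomials $[\tau]x_\tau$ are exactly the images under the comparison map coming from the Stanley--Reisner resolution $C^1$ paired against the $x_i$-Koszul factor of $C^4$. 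Thus, up to a unit in $\fld^\ast$, the top comparison entry equals $h$, so $\phi$ coincides with $T_h$ up to a scalar and a multiple of $i$. Since $h \in (I_{up} : (x_1,\dots,x_m))$ by Proposition~\ref{clm75!qual}, the map $T_h$ is defined; since $h+I$ is nonzero in $R/I$ by Proposition~\ref{clm7!qual}, $T_h$ is not a multiple of $i$, and rescaling by the unit does not change the pair $(T_h, i)$ as a generating set. Hence $T_h$ together with $i$ generate $\operatorname{Hom}_{R/I}(J/I, R/I)$.

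A slightly different but equivalent route, which may be cleaner to write, is purely module-theoretic: by Theorem~\ref{prop!rnaedgssd} applied to the degenerate unprojection predata $I_{up} \subset (x_1,\dots,x_m)$, the module $L = (I_{up} : (x_1,\dots,x_m))/I_{up}$ is cyclic, and for \emph{any} generator $u+I_{up}$ of $L$ the map $T_u$ together with $i$ generates $\operatorname{Hom}_{R/I}(J/I, R/I)$. So Proposition~\ref{prop!iogsdg} is equivalent to the assertion that $h+I_{up}$ generates $L$ --- which is literally the statement of Theorem~\ref{theorem!eeree}. Therefore I would instead prove directly that $h+I_{up}$ generates $L$, by showing that the cyclic generator $v+I_{up}$ produced abstractly by Theorem~\ref{prop!rnaedgssd} satisfies $v \equiv (\text{unit})\cdot h \pmod{I_{up}}$: this forces comparing the bidegree $(d,d)$ of $h$ with the bidegree of the socle-type generator of $L$, using that $\AR = R_{down}/I_{down}$ is Artinian Gorenstein with socle concentrated in degree $e = d$, and that $\Phi(v+I_{up})$ and $\Phi(h+I_{up})$ both lie in the one-dimensional space $\AR_d$, with $\Phi(h+I_{up}) \neq 0$ by Proposition~\ref{clm7!qual} (via $R_{down}/I_{down}$). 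Bihomogeneity then pins down the ratio to lie in $(R_{up})_{(0,0)} = \fld$, giving $v = c\,h$ in $L$ with $c \in \fld^\ast$, hence $h+I_{up}$ generates $L$.

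The main obstacle is the first approach's reliance on the precise form of the Kustin--Miller comparison map at the top of the resolution --- matching the abstractly-produced unprojection map $\phi$ with the explicit polynomial $h$ requires the appendix's description of unprojection via resolution comparison and the tensor-product structure $C^3 = C^1 \otimes C^2$; getting the signs and the Koszul-sign bookkeeping right (the $\epsilon \in \{-1,1\}$ of Proposition~\ref{clm6!equal}) is the delicate part, though harmless since everything is only claimed up to $\fld^\ast$. The second (module-theoretic) approach sidesteps this entirely and reduces to a bidegree and nonvanishing count, so that is the route I expect to actually write; its only subtlety is confirming that $L$ is concentrated in a single bidegree, which follows from $\AR$ being Artinian Gorenstein with one-dimensional socle in degree $d$ together with the degree-respecting injection-on-that-component property of $\Phi$ from Corollary~\ref{corol!relation758753}.
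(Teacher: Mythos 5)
Your reduction via Theorem~\ref{prop!rnaedgssd} and Remark~\ref{rem!converseofProposition} is correct: the proposition is equivalent to the assertion that $h+I_{up}$ generates the cyclic module $L$. But the second, ``purely module-theoretic'' route you say you would actually write has a genuine gap at exactly the point you claim to sidestep. Write $h = p\,v$ in $L$, where $v$ is a bihomogeneous generator of $L$ of bidegree $(a,b)$ and $p$ is bihomogeneous of bidegree $(d-a,\,d-b)$. The Artinian-reduction argument (injectivity of $\Phi$ in the Cohen--Macaulay case, socle of $\AR$ concentrated in $x$-degree $d$) does pin down $a=d$, hence $p\in\fld[\theta_{i,j}]$. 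It does \emph{not} pin down $b$: the map $\Phi$ only respects the $x$-grading, and passing to $E=\fld(\theta_{i,j})$ inverts all the $\theta_{i,j}$, so $\AR$ carries no information about the $\theta$-degree of $v$. If $b<d$ then $p$ is a nonconstant homogeneous polynomial in the $\theta_{i,j}$, and since by Proposition~\ref{prop!cycliccase} the module $L=(N_{up}/I_{up})_{(d,-)}$ is free of rank one over the domain $\fld[\theta_{i,j}]$, the element $h=p\,v$ would then fail to generate $L$. So the statement ``bihomogeneity pins down the ratio to $(R_{up})_{(0,0)}=\fld$'' is unjustified, and your closing remark that $L$ is concentrated in a single bidegree is false ($L\cong\fld[\theta_{i,j}]$ as a $\fld[\theta_{i,j}]$-module, so it lives in infinitely many $\theta$-degrees; only its generator sits in a single bidegree, and it is precisely that generator's $\theta$-degree that must be computed).

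The missing input is exactly what the paper's proof supplies via Proposition~\ref{clm56!who}: the comparison map of minimal free resolutions $\hat{F}_m\simeq R(-m-d,-d)\to\bar{F}_m\simeq(R(-m,0))^{m+1}$ has entries of bidegree $(d,d)$, and the Kustin--Miller complex construction (Subsection~\ref{sec!kmorig}) reads the unprojection map $\phi$ off this comparison, forcing $\phi(z)$ --- equivalently the generator of $L$ --- to have full bidegree $(d,d)$, $\theta$-degree included. Once $b=d$ is known, your argument closes: $p\in(R_{up})_{(0,0)}=\fld$ and $p\neq 0$ by Proposition~\ref{clm7!qual}. Your first route is essentially the paper's proof, except that you additionally try to identify the top comparison entry explicitly with $[\sigma]x_\sigma$ via the tensor decomposition $C^3=C^1\otimes C^2$; that identification is the delicate part and is not needed --- the paper uses only the bidegree of the comparison entry, combined with $h\in(I_{up}:(x_1,\dots,x_m))$, $\deg h=(d,d)$, and $h\neq 0$ in $R/I$. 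So: keep the resolution comparison, drop the explicit identification, and do not attempt to replace the bidegree computation by an argument inside $\AR$.
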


\begin{proof}
	We denote by $<i>$ the $R/I$-submodule of $\operatorname{Hom}_{R/I}(J/I, R/I)$
	generated by the inclusion $i$.
	Proposition~\ref{clm56!who} together with  the short exact sequence~(\ref{eqn!sesforunprno1})
	and the general  theory of Kustin-Miller unprojection  (see  \cite{P,PR} and  Subsection~\ref{sec!kmorig})   
	implies  that 
	\[    
	\operatorname{Hom}_{R/I}(J/I, R/I)/<i>
	\]
	is a cyclic module generated  by the class of an element $\phi$ such that  
	$\phi (z)$  has  bidegree $(d,d)$.  
	By  Proposition~\ref{clm75!qual}  $h \in   (I_{up} : (x_1,  \dots, x_m))$,
	$h$ clearly  has bidegree $(d,d)$ and by   Proposition~\ref{clm7!qual} $h$ is nonzero in $R/I$. 
	This finishes the proof of Proposition~\ref{prop!iogsdg}.
\end{proof}

\section{The relative case and locality}

The volume normalization theory we have developed so far can be used 
under the assumption that we work with Gorenstein algebras. It applies, 
for example, for the class  of  face rings of simplicial spheres.  
The relative volume normalization setting, which we will now discuss,
extends the theory.  It is useful to handle 
situations where  the socle is not one-dimensional and one may want 
to normalize with respect to a specific choice of fundamental class (such 
is the case when considering face rings of triangulated
manifolds \cite{AHL, APP}).   Moreover, it allows  us to understand the 
normalization in cases where it otherwise becomes rather challenging to 
access it directly, as in the case of lattice polytopes.

Let us try to establish that normalization is, in some way, local. We already saw a glimpse of this: In the case of simplicial spheres $D$,
 the volume map evaluated at $\mbf{x}_F$, where $F$ is a facet, only depended on the linear system of parameters
 at that facet; more precisely, on the indeterminates corresponding to that facet. But the why of this is a bit unclear from our calculation. We aim to explain that fact:

There are several ways to attempt to encode this: For instance, we could consider simplicial homeomorphisms $D'\rightarrow D$ 
that leave $F$ invariant, and study the resulting map of rings. Alas, the class of objects we then understand is not of the desired 
generality. This section's main goal is to observe that sometimes, in order for practical calculations to succeed, it is useful to consider relative settings.

So we adopt a different perspective: consider a standard graded Gorenstein ring $T$ as a module over a polynomial 
ring $R=\fld[\mbf{x}]$. We assume once more that $T$ is of positive Krull dimension, and that the fundamental class
 lives in degree $d$. We finally consider a squarefree monomial $\mbf{x}_F$ of degree $d$ in $R$, and set out to compute it's volume map with respect to a normalization. 

We call $\mbf{x}_F$ \emph{isolated} in $T$ if the ideal $I_F$ generated by $x_F$ in $T$ is isomorphic to $\fld[\mbf{x}_{i,i\in F}]$.

\begin{example}
Consider a simplicial complex $X$, and let $F$ denote a facet of $X$. Then $\mbf{x}_F$ is isolated in the face ring of $X$.
\end{example}

Naturally, the Krull dimension of $T$ is $d$. 

Moreover, $T/I_F$ is Cohen-Macaulay, and $I_F$ is Gorenstein. 

Hence, the question arises:

What is the normalization, the value of the volume map, at $\mbf{x}_F$?

We can ask this question twice:

What is the value in $I_F$ (which, naturally, is Gorenstein itself)?

And what is the value in $T$?

Of course, we expect the answers to be the same. Let us do the math.

\subsection{Relative KM Normalization in $I_F$}\label{sec:relI}

We adopt the viewpoint of Section \ref{ssc:oncemore}. Consider $I_F=\langle \mbf{x}_F \rangle$ in $T$. 

Let $R_{up}=R[\theta_{i,j}]$. Consider the minimal free resolution $F_\bullet$ of the module $I_F/\Theta I_F$. 

\[ 0\ \longrightarrow\ F_d\ \longrightarrow\ F_{d-1}\ \longrightarrow\ \cdots\ \longrightarrow F_{0}\ \longrightarrow\ \bigslant{I_F}{\Theta I_F}\ \longrightarrow\ 0 \]

We  also consider  the minimal resolution $G_\bullet$ of ${R_{up}}/{\mbf{x}R_{up}}$. Since we have a natural map 
\[I_F=F_0\ \longrightarrow\ R_{up}=G_0\]
induced by the inclusion of ${I_F}$ into $S'$,
and in particular a map of the one-dimensional modules $\Phi:F_d\rightarrow G_d$, both of which are isomorphic to $R_{up}$. 

\begin{definition}
The relative KM normalization sets $\vo(\Phi(1))=1$.
\end{definition}

We have that $\Phi(1)=\Delta(\theta_{i,j})_F \mbf{x}_F$, and hence immediately obtain the KM normalization 
\[\vo(\Delta(\theta_{i,j})_F \mbf{x}_F)=1.\]

\subsection{Normalization in $T$}\label{sec:NorminI}

We want to consider now $T$ as an $R_{up}$-module, and compute the minimal free resolution $H_\bullet$ of $T/\Theta T$. Note: we have a commutative triangle 
\[\begin{tikzcd}[column sep=5em]
	\bigslant{I_F}{\Theta I_F} \arrow[hook]{r}{} \arrow{dr}{} & \bigslant{T}{\Theta T} \arrow{d}{} \\
	 & \bigslant{R_{up}}{\mbf{x}R_{up}}
\end{tikzcd}
\]
which of course induces a commutative triangle 
\[\begin{tikzcd}[column sep=5em]
	F_d \arrow{r}{} \arrow{dr}{} & H_d \arrow{d}{} \\
	& G_d
\end{tikzcd}
\]
of one dimensional $R_{up}$-modules. The top map is an isomorphism, hence the normalization coincides. 

\begin{example}
If $D$ is a simplicial sphere, and $F$ is a facet, it follows that the KM normalization satisfies
\[\vo(\Delta(\theta_{i,j})_F \mbf{x}_F)=1.\]
\end{example}

\subsection{Locality}

Let us reexamine what happened in the last section, and for this purpose, introduce the notion of 
a \emph{dualizable Cohen-Macaulay module}: Such modules are characterized by being Cohen-Macaulay, and the socle 
of their Artinian reduction is of dimension one.

One such example is considering a Gorenstein ring $T$ as above, and considering an ideal $K$ in it 
such that $T/K$ is Cohen-Macaulay. Such a module is dualizable Cohen-Macaulay, and, by defining the relative 
normalization as in Section~\ref{sec:relI}, with $I_F$ replacing $K$. Section~\ref{sec:NorminI} then shows that this 
normalization only depends on $K$, not on $T$. In particular, if two rings contain isomorphic dualizable Cohen-Macaulay ideals, then 
the KM normalization in this ideal only depends on $K$, and not on $T$ or $T'$. Another way of saying it: If $m$ is an element 
in the socle of $\mathcal{K}$, the Artinian reduction of $K$, then $\vo (m)$ is independent of whether we are considering $T$ or $T'$.

\section{Applications}

Let us note the two main applications of this relative perspective.

\subsection{Simplicial spheres to manifolds}

One issue we had in the absolute perspective was to explain the KM normalization in the case of objects 
more general than manifolds, see \ref{example!torustriangulation}; the relative normalization helps here. We can 
use it to explain the normalization for general (closed, compact orientable) manifolds, pseudomanifolds and more generally face rings of cycles \cite{APP}.

Recall: Given a triangulated $(d-1)$-dimensional cycle $\mu$ over $\fld$, we can consider the face ring $T$ of its underlying simplicial complex. 
Consider any Artinian reduction $\AR$ of $T$. Note that $\mu$ defines a map 
          \[\mu^\ast:\AR_d \ \longrightarrow\ \fld\] 
and we can consider 
the quotient $\mathcal{B}$ of $\AR$ under the annihilator of this map. $\mathcal{B}$ is Gorenstein of Krull dimension zero. This quotient is of 
course independent of the precise choice of $\mu^\ast$, and the precise choice is nothing but the normalization of the volume map.

And we can consider any facet $F$ of $\mu$, and observe that if the coefficient of $\mu$ on $F$ is $\mu_F$, then this is reflected in the relative 
normalization with respect to $I_F$. And while it is a bit troublesome to explain the value of $\vo(\x_F)$ in $\mathcal{B}$, we have a direct 
explanation within $I_F$, as $I_F$ is simply isomorphic to the polynomial ring $\fld[\mbf{x}_{i,i\in F}]$. The value of 
 $\vo(\x_F)$ is therefore naturally subject to 
\[\vo(\Delta(\theta_{i,j})_F \mbf{x}_F)=\mu_F.\]
In other words, relative KM normalization recovers the natural normalization for cycles. 
 
\subsection{Lattice polytopes}

Consider next an IDP lattice polytope $P$ of dimension $d-1$, that is, a lattice polytope such that the semigroup algebra $\fld[K_P]$ is 
standard graded (another word for generated in degree one). Here, $K_P$ is the cone 
over $P\times \{1\}$ in $\Lambda \times \mathbb{Z}$, where $\Lambda$ is the ambient lattice for $P$.

In \cite{APPS}, we studied the volume map of such semigroup algebras, and argued that a certain natural choice can be made. We argue 
here that it agrees with KM normalization. 

Specifically, we are interested in the module $\fld[K_P, \partial K_P]$ that is obtained as the kernel of the map 
\[\fld[K_P]\ \longrightarrow\ \fld[\partial K_P].\]

The socle of the Artinian reduction $\AR$ of $\fld[K_P, \partial K_P]$ is concentrated in degree $d$, and it is natural and imperative to understand the map 
\[\AR_d\ \longrightarrow\ \fld\]
And while the normalization we gave in \cite{APPS} fell a bit out of thin air, we reconstruct it here. 

The key observation, \textbf{to start with}, is that if $P$ is a unimodular lattice $(d-1)$-simplex, that is, a lattice simplex such that the vertices of $P\times \{1\}$ generate $K_P\cap (\Lambda \times \mathbb{Z})$, the semigroup algebra is but the face ring of a simplex, that is, it is the polynomial ring generated by $d$ generators. And
$\fld[K_P, \partial K_P]$, in this case, is the ideal generated by $\x_P$, the product of these generators. We fully understand the normalization thanks to relative KM normalization.

\textbf{Simple case.} If $P$ has a unimodular boundary facet $\tau$ in $\partial P$, then we obtain the desired normalization by matching the face ring picture and setting 
\begin{equation}\label{eq:normie}
	1\ =\ \sum_{p\in (P\setminus\tau)\cap\Lambda} \vo(\x_p\x_\tau) \Delta(\Theta_{|\tau,p}) 
\end{equation}
where $\Theta=(\theta_{i,j})$ is the matrix of coefficients in the linear system of parameters. 

\textbf{General case.} In general, consider a flag $(\tau_i)$ of faces of $P$ such that $\tau_d=P$ and such that $\tau_i$ is a facet of $\tau_{i+1}$. We say a set $\sigma=\{\sigma_0,\ldots,\sigma_d\}\subset(P\cap\Lambda)^{d+1}$ without repetitions is \Defn{coherent} with $(\tau_i)$ if it intersects $\tau_i$ in a set of cardinality $i+1$. We then normalize by setting
\begin{equation}\label{eq:norm2}
	1\ =\ \sum_{\sigma \text{ coherent with } (\tau_i)} \vo(\x_\sigma) \Delta(\Theta_{|\sigma}) 
\end{equation}

We claim that this is consistent with KM normalization. Of course, at first, we do not have a Gorenstein ring. But 
we can create one: If we consider the complex of lattice polytopes $P \cup (v\ast \partial P)$, we have a 
lattice sphere $\Sigma$, and the resulting algebra is naturally Gorenstein, obtained by gluing together 
the semigroup algebras of the individual lattice polytopes, is Gorenstein \cite{BBR}.

If $\Sigma$ has a unimodular facet, then we apply the relative normalization above to that facet. More precisely:

If $P$ has a boundary consisting of unimodular simplices, then we consider the cone over $P$ as a polytope; it follows that 
\[(v\ast \partial P)\ \cup\  P\]
is a lattice complex (again, in the sense of \cite{BBR}), and that it defines a Gorenstein ring, meaning 
that the volume on $P$ extends uniquely to a volume on $(v\ast \partial P) \cup P$. For the simplices $F$ 
of $v\ast \partial P$ the volume is uniquely determined and given by $\det^{-1}(\Theta_{|F})$; the normalization 
in the simple case, that is, Identity \eqref{eq:normie} then follows by the following identity, see \cite[Proposition~11.1]{PP}.

\begin{lemma}\label{descentlemma}
Consider any Gorenstein ring $T$ of socle degree $d$ and Krull dimension $d$. Then in the generic Artinian reduction $\AR$ of $T$, and any homogeneous polynomial $m$ of degree $d-1$, we have
\[\sum_{x_i, i\in I} \Delta(\theta_{|J,i}) \vo(mx_i)\ =\ 0\]
where $I$ indexes the indeterminates of the polynomial ring and $J$ is any subset of $I$ of size $d-1$. 
\end{lemma}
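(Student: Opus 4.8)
The plan is to reduce the claimed identity to the vanishing of a single linear form in $\AR$, and then verify that vanishing by a cofactor (Laplace) expansion. First, since $T$ is Gorenstein of socle degree $d$, the generic Artinian reduction $\AR$ is Artinian Gorenstein with one–dimensional socle $\Socle(\AR)=\AR_d$ (Theorem~\ref{thm!ulocisnonzero}), so $\vo\colon \AR_d\to E$ is a well-defined $E$-linear functional. Because $m$ has degree $d-1$, each $mx_i$ lies in $\AR_d$, so $E$-linearity of $\vo$ gives
\[
\sum_{i\in I}\Delta(\theta_{|J,i})\,\vo(mx_i)\ =\ \vo\!\left(m\cdot\ell_J\right),\qquad \ell_J\ :=\ \sum_{i\in I}\Delta(\theta_{|J,i})\,x_i\ \in\ \AR_1,
\]
where $\Delta(\theta_{|J,i})\in E$ is the $d\times d$ minor of $\Theta=(\theta_{k,j})$ on columns $J\cup\{i\}$ (so the term is $0$ whenever $i\in J$). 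Hence it suffices to show that $\ell_J=0$ in $\AR$, i.e.\ that the linear form $\sum_{i\in I}\Delta(\theta_{|J,i})x_i$ of $R_{down}$ lies in $I_{down}$.

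Second, I would prove this by expanding each determinant along the "new" column. Fix an ordering $J=\{j_1,\dots,j_{d-1}\}$ and expand the $d\times d$ determinant on columns $j_1,\dots,j_{d-1},i$ along its last column:
\[
\Delta(\theta_{|J,i})\ =\ \sum_{k=1}^{d}(-1)^{k+d}\,\theta_{k,i}\,C_k,
\]
where $C_k$ is the $(d-1)\times(d-1)$ minor of $\Theta$ on rows $\{1,\dots,d\}\setminus\{k\}$ and columns $J$; the essential point is that $C_k$ does not depend on $i$. Substituting and interchanging the two summations,
\[
\ell_J\ =\ \sum_{k=1}^{d}(-1)^{k+d}\,C_k\,\Big(\sum_{i\in I}\theta_{k,i}\,x_i\Big)\ =\ \sum_{k=1}^{d}(-1)^{k+d}\,C_k\,f_k\ \in\ (f_1,\dots,f_d)\ \subseteq\ I_{down}.
\]
Therefore $\ell_J=0$ in $\AR$, so $m\cdot\ell_J=0$ in $\AR_d$, and the desired sum equals $\vo(0)=0$.

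I do not expect a genuine obstacle: the Gorenstein hypothesis enters only to make $\vo$ a well-defined $E$-linear map on $\AR_d$, and the remaining content is the elementary Cramer/Laplace identity above, carried out over $E=\fld(\theta_{i,j})$. The only point needing care is the bookkeeping of signs and of which rows and columns index the minors $C_k$, together with the harmless convention that $\Delta(\theta_{|J,i})=0$ for $i\in J$, which is consistent with the expansion (the determinant then has a repeated column). Alternatively, one may simply invoke \cite[Proposition~11.1]{PP}, of which this is a restatement.
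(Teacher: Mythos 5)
Your argument is correct. Note that the paper itself does not prove Lemma~\ref{descentlemma}: it only cites \cite[Proposition~11.1]{PP}, so your write-up is a genuine (and welcome) filling-in rather than a reproduction. The reduction to showing that $\ell_J=\sum_{i}\Delta(\theta_{|J,i})x_i$ lies in $I_{down}$, followed by the Laplace expansion along the column indexed by $i$ to exhibit $\ell_J=\sum_{k}(-1)^{k+d}C_k f_k\in(f_1,\dots,f_d)$, is exactly the standard mechanism behind the cited identity, and your observation that the Gorenstein hypothesis is only used to make $\vo$ a well-defined $E$-linear functional on $\AR_d$ (the rest being pure linear algebra over $E$) is accurate. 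The one point worth making explicit is that $\Delta(\theta_{|J,i})$ is only defined up to the choice of column ordering; your convention of always placing column $i$ last makes the coefficients consistent across all $i$, and since the asserted identity is homogeneous in these coefficients, any other consistent convention changes $\ell_J$ only by a global sign and does not affect the conclusion.
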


For an arbitrary IDP lattice  polytope $P$, we argue in a similar fashion. Unfortunately, unless $P$ has a 
unimodular facet, \[(v\ast \partial P)\ \cup\  P\] does not have a unimodular facet.
However, we can use the fact that the faces of $v\ast \partial P$ are decidedly more simplicial than $P$: they are cones over polytopes of codimension one. 
 
Let us spin this idea further: $P$ is a $d$-dimensional lattice polytope, and we consideran additional set of vertices
$V=\{ v_1,\cdots, v_{d} \}$.
 
Consider the polyhedral complex 
\[\bigcup_{W\subset V} (\bigvee_{v\in W} v) \ast P^{(d-\# W)}\]
where $\bigvee_{v\in W} v$ denotes the free join over the vertices in $W$ and $P^{(d-\# W)}$ consists of the faces of $P$ of dimension $(d-\# W)$ or less.
 
This complex is grouped into polytopes of different kinds, depending on the size of $W$. For $W=V$, we obtain a unimodular simplex.
 
We observe that the algebra generated by this complex is Gorenstein, and that the KM normalization of the simplex $\bigvee_{v\in V} v$ is consistent with the observed rule, as it is just a unimodular simplex.
 
Assuming then that we have the normalization formula for $W$ of cardinality at least $s$, Lemma~\ref{descentlemma} gives the normalization for cardinality $s-1$. This recovers Identity \eqref{eq:norm2}, and the consistency of the volume normalization. Hence, the relative formalism has allowed us to recover the following

\begin{corollary}
	Consider $\Sigma$ a sphere built out of IDP lattice polytopes. Then the algebra it generates is Gorenstein, and at any facet $P$ of $\Sigma$ satisfies Identity \eqref{eq:norm2}.
\end{corollary}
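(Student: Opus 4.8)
The plan is to obtain the corollary by combining the gluing theorem of \cite{BBR} with the locality of the KM normalization established in Sections~\ref{sec:relI} and \ref{sec:NorminI}, and then to produce Identity~\eqref{eq:norm2} at a prescribed facet $P$ by a descending induction whose base case is a unimodular simplex and whose inductive step is Lemma~\ref{descentlemma}. First, the Gorenstein assertion: since $\Sigma$ is a sphere all of whose cells are IDP lattice polytopes, the algebra it generates is the gluing of the standard graded semigroup algebras $\fld[K_Q]$ of its cells $Q$ along the semigroup algebras of common faces, and by \cite{BBR} such a gluing over a sphere of dimension $d-1$ is Gorenstein of Krull dimension $d$ with one-dimensional socle in degree $d$. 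This settles the first claim and, for a facet $P$, places the interior ideal of $P$ (the kernel of $\fld[K_P]\to\fld[\partial K_P]$, realized inside the Artinian reduction $\AR$ of $\Sigma$) in the class of dualizable Cohen-Macaulay modules treated in Section~\ref{sec:relI}.

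Second, I would invoke locality. By the commutative-triangle argument of Section~\ref{sec:NorminI}, the KM normalization restricted to elements supported in the interior ideal of $P$ is computed inside that ideal alone and is independent of the ambient ring; consequently $\vo(\x_\sigma)$ for a set $\sigma$ of lattice points of $P$ is the same whether computed in $\AR$ or in the Artinian reduction of any other Gorenstein algebra containing an isomorphic copy of this ideal. I would therefore replace $\Sigma$ locally at $P$ by the auxiliary lattice complex built from $P$ and $d$ fresh vertices $V=\{v_1,\dots,v_d\}$,
\[\bigcup_{W\subset V}\Bigl(\bigvee_{v\in W} v\Bigr)\ast P^{(d-\#W)},\]
which is again a lattice complex in the sense of \cite{BBR}, hence generates a Gorenstein ring, and whose top cell for $W=V$ is a unimodular $d$-simplex. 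On that simplex the relative KM normalization of Section~\ref{sec:relI} gives $\vo(\Delta(\Theta_{|F})\x_F)=1$, i.e. Identity~\eqref{eq:norm2} holds in the base case $\#W=d$.

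Third, descending induction on $\#W$: assuming \eqref{eq:norm2}-type formulas are known for all cells $(\bigvee_{v\in W}v)\ast P^{(d-\#W)}$ with $\#W\ge s$, I would apply Lemma~\ref{descentlemma} to the Gorenstein algebra of the auxiliary complex, taking the degree $d-1$ polynomial $m$ to be $\x_\tau$ for codimension-one faces $\tau$ of the cells with $\#W=s-1$ and taking $J$ to index a unimodular subsimplex carved out by the auxiliary vertices. The descent identity then expresses the volumes at the $\#W=s-1$ cells in terms of the already-known $\#W\ge s$ values, and summing over the coherent configurations telescopes exactly to \eqref{eq:norm2} for $\#W=s-1$; at $s-1=0$ this is the asserted identity at the facet $P$.

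The main obstacle I expect is the bookkeeping in this last step: one must verify that the auxiliary complex really is a lattice complex to which \cite{BBR} applies, and, more delicately, that the iterated applications of Lemma~\ref{descentlemma} — each eliminating one auxiliary vertex — recombine with the correct determinantal weights $\Delta(\Theta_{|\sigma})$ and signs into the single sum over configurations coherent with a flag $(\tau_i)$, with all the unimodular boundary contributions from the join factors $\bigvee_{v\in W}v$ cancelling or being absorbed as intended. Tracking the notion of coherence through the induction so that nothing is double counted, and matching the result with the normalization~\eqref{eq:norm2} already written down in \cite{APPS}, is where the real work lies.
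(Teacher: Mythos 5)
Your proposal follows essentially the same route as the paper's own (informal) derivation: Gorensteinness via the gluing theorem of \cite{BBR}, locality of the relative KM normalization from Sections~\ref{sec:relI} and~\ref{sec:NorminI}, the auxiliary complex on $d$ fresh vertices whose top cell $\bigvee_{v\in V}v$ is a unimodular simplex serving as base case, and descending induction on $\#W$ via Lemma~\ref{descentlemma}. The combinatorial bookkeeping you flag at the end is likewise left implicit in the paper, so your account matches both the structure and the level of detail of the intended argument.
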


\section* {Acknowledgements}   \label{sec!acknowledgements}
S. A. P. and  V. P.  thank Frank-Olaf Schreyer for useful discussions.
They also  benefited from
experiments with the computer algebra program Macaulay2~\cite{GS}. 
K. A. and V. P. were financially supported by Horizon Europe ERC Grant number: 101045750 /
Project acronym: HodgeGeoComb.

\section*{Appendix:  A brief survey of Kustin-Miller unprojection} \label{app!appentixkmunprojection}

The present appendix is a brief survey of Kustin-Miller unprojection.

\subsection{Introduction}  \label{sec!introduction}

The structure of Gorenstein rings of codimension $\leq 3$  is well understood, (see e.g. \cite[ Corollary~ 21.20]{Eis} 
and \cite[ Theorem~ 3.4.1]{BH}). More precisely, a codimension~$1$ Gorenstein ring is a hypersurface, a  
codimension~$2$ Gorenstein ring is a complete intersection and a  codimension~$3$ Gorenstein ring is Pfaffian.
In the $1980$s,  A. Kustin and M. Miller \cite{KM3, KM1,KM2, KM,KM4,KM5} during their investigation of the 
structure of Gorenstein rings of codimension greater or equal than~$4$ introduced a method that constructs 
Gorenstein rings of more complicated form starting from simpler ones. This  method is known as Kustin-Miller unprojection.

Some years later, M. Reid who was also interested to understand the structure of Gorenstein rings in low codimension,  
motivated by some problems coming from algebraic geometry, reinterpreted the theory of A. Kustin and M. Miller by 
giving his formulation. S.~Papadakis and M.~ Reid  \cite{P, PR} developed further the theory of unprojection. Besides 
of Kustin-Miller unprojection, known also as type I unprojection,  there are several other kinds of 
unprojection~\cite{P3,P6,P4,P5, R1, TR}. However,  in this survey  we will only focus  on the Kustin-Miller unprojection.

The assumptions of Kustin-Miller unprojection are that $I,J\subset R$ are two ideals of a positively graded 
Gorenstein ring $R$ such that $R/I, R/J$ are Gorenstein, $I\subset~J$ and $\dim \  R/I- ~\dim \  R/J= 1$. Then, 
there exists an $R/I$-module homomorphism $\phi: ~ J\rightarrow ~ R/I$ such that the $R/I$-module  
 $\operatorname{Hom}_{R/I}(J,R/I)$ is generated by $\phi$ and the inclusion $i\colon J\rightarrow~ R/I$. 
According to M. Reid, $\operatorname{Hom}_{R/I}(J,R/I)$ contains all the important information for the 
construction of unprojection ring. Under these assumptions, M. Reid and S. Papadakis~(\cite[ Theorem~ 1.5]{PR}), proved that the ring of unprojection is Gorenstein. 

Kustin-Miller unprojection can be used several times, inductively, for the construction of Gorenstein rings of higher 
codimension. This process is called parallel Kustin-Miller unprojection, and was developed by J. Neves and S.~Papadakis \cite{NP2}.

Unprojection theory can be considered as the algebraic language for the study of certain constructions 
in algebraic geometry.  The range of its applications is wide. In explicit algebraic geometry, it is used in the study and the 
construction of some interesting geometric objects such as surfaces of general type,  K3 surfaces, Fano $3$-folds 
and Calabi-Yau $3$-folds ~\cite{AL, ABR, BG,NP1,  NP2, PV, PV1}. The graded ring database ~\cite{BR} contains lists 
of graded rings of such objects whose existence is conjectured and may possibly be proved via unprojection. Moreover, 
it is used in the further development of the  Minimal Model Program by providing an effective way to study explicitly varieties 
and morphisms between them  ~\cite{BKR,BGR,CPR,CM}. Also, it appears in algebraic combinatorics, in the study of 
Stanley-Reisner ideal of cyclic polytopes and stellar subdivisions of Gorenstein complexes \cite{BPBPBP1, BP1, BP3, BP4}.

Kustin-Miller unprojection is related to liaison theory (also known as linkage theory). For more details we refer to 
\cite{KM4}  and \cite[Section~2.6]{Ketal}.

The further development of unprojection theory is an active area of research. There are many recent 
contributions \cite{CL1, CL2, PV, PV1, TR} on foundational questions, computational questions and
applications of unprojection theory.  In the current research work, we use unprojection theory and 
especially Kustin-Miller unprojection, to define the normalization of the volume map.

The survey is organised as follows.
In Subsection~\ref{subs!kmunproj433}  we recall the main principles of unprojection as formulated by M. Reid and S. Papadakis.  In 
Subsections \ref{subs!citoci}, \ref{subs!tmjr},  we focus on two kinds of Kustin-Miller unprojection which lead to the 
construction of Gorenstein rings of codimension $3$ and $4$ respectively. In Subsection \ref{subs!parkmunpr}, we refer 
briefly to the parallel Kustin-Miller unprojection of J. Neves and S. Papadakis and its applications. In Subsection~\ref{sec!kmorig}, we 
present the original construction of A. Kustin and M. Miller known as the Kustin-Miller complex construction. In 
Subsection~\ref{subs!m2pack}, we discuss the Macaulay2 package "KustinMiller", developed by J. B\"{o}hm and S. Papadakis, which
implements the Kustin-Miller complex construction on the the computer algebra system Macaulay2 \cite{GS}.

\subsection{Kustin-Miller unprojection}  \label{subs!kmunproj433}
In this subsection, we follow \cite{PR}. Assume $R$ is a positively graded Gorenstein ring. Let $I,J$ be homogeneous ideals 
of $R$ such that $R/I$ and  $R/J$ are Gorenstein, $I\subset J$ and 
$\dim \  R/I- ~\dim \  R/J=~1$. 
By duality theory,  applying the functor $\operatorname{Hom}_{R/I}(-, {R/I})$ to the exact sequence
\begin{equation}  \label{eqn!no1}
     0\rightarrow \  {J}\rightarrow  R/I
           \rightarrow {R/J}  \rightarrow 0  
\end{equation}
we get the following exact sequence
\begin{equation}   \label{eqn!no2}
     0\rightarrow \  {R/I}\rightarrow \operatorname{Hom}_{R/I}(J, {R/I})
           \rightarrow {R/J}  \rightarrow 0  
\end{equation}
where the second nonzero map corresponds to the Poincar\'e residue map.\\
Using that $R/I$ and $R/J$ are cyclic, we conclude that the $R/I$-module $\operatorname{Hom}_{R/I}(J, R/I)$ is generated by 
two elements. Hence, there exists an $R/I$-module homomorphism $\phi: J\rightarrow~R/I$ such that $\phi$ together with the inclusion 
map $i: J\rightarrow R/I$ generate $\operatorname{Hom}_{R/I}(J, R/I)$ as $R/I~$-module.

\begin{definition}(\textit{Reid})    \label{def!unprr}
Assume $T$ is a new variable.  We define as \textit{Kustin-Miller unprojection ring} of the pair $J\subset R/I$ the quotient
\[ \operatorname{Unpr}(J,R/I) =\frac{R[T]}{I+(Tr-\phi(r): r \in J)}\]
\end{definition}

\begin{theorem}(Papadakis-Reid)
The ring $\operatorname{Unpr}(J,R/I)$ is Gorenstein.
\end{theorem}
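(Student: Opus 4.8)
The plan is to follow the strategy of Papadakis and Reid. Since the Gorenstein property is local (and, for a graded ring, may be tested at the irrelevant maximal ideal), and since the formation of $\operatorname{Unpr}(J,R/I)$ is compatible with the relevant localizations of $R$, I would first reduce to the case of a local — indeed complete local — Gorenstein ring $(R,\mathfrak m)$ with $I\subseteq J\subseteq\mathfrak m$. Write $S=R/I$ and $B=R/J$: these are Gorenstein of Krull dimensions $n$ and $n-1$, and $\bar J:=J/I$ is a codimension-one ideal of the Cohen--Macaulay ring $S$, hence contains an $S$-regular element $t$.

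The first substantial step is to pin down the $S$-module $N:=\operatorname{Hom}_S(\bar J,S)$, which by Reid's principle carries all the relevant information. Applying $\operatorname{Hom}_S(-,S)$ to $0\to\bar J\to S\to B\to 0$, and using $\operatorname{Hom}_S(B,S)=0$ (because $B$ is $S$-torsion while $S$ is torsion-free) and $\operatorname{Ext}^1_S(S,S)=0$, one obtains an exact sequence
\[
0\ \longrightarrow\ S\ \xrightarrow{\ \iota\ }\ N\ \longrightarrow\ \operatorname{Ext}^1_S(B,S)\ \longrightarrow\ 0,
\]
in which $\iota(1)$ is the inclusion $\bar J\hookrightarrow S$. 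Since $S$ is Gorenstein one may take $\omega_S\cong S$ (up to a grading shift), and since $B$ is Cohen--Macaulay of codimension one over $S$, local duality identifies $\operatorname{Ext}^1_S(B,S)\cong\omega_B$; as $B$ is Gorenstein, $\omega_B\cong B$. Hence $N$ is an extension of $B$ by $S$, minimally generated by $\iota$ together with any $\phi$ whose image generates $N/S\iota\cong B$ — precisely the pair of generators occurring in Definition~\ref{def!unprr}, and after rescaling we may assume the image of $\phi$ is $1\in B$.

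Now set $\mathcal U:=\operatorname{Unpr}(J,R/I)$. The defining relations $Tf=\phi(f)$, $f\in J$, say exactly that multiplication by $T$ on $\mathcal U$ extends the homomorphism $\phi$; together with the structure map $R\to\mathcal U$ this exhibits both generators $\iota,\phi$ of $N$ inside $\mathcal U$. Inverting the $S$-regular element $t$ forces $T=\phi(t)/t$, so $\mathcal U[t^{-1}]=S[t^{-1}]$ and in particular $\dim\mathcal U\ge n$. To finish I would proceed in one of two ways. The route of Kustin and Miller (surveyed in Subsection~\ref{sec!kmorig}) takes the minimal free resolutions $\mathbb F_\bullet\to R/I$ and $\mathbb G_\bullet\to R/J$ over $R$ — which are self-dual up to shift because $R/I$ and $R/J$ are Gorenstein — lifts the surjection $R/I\twoheadrightarrow R/J$ to a comparison map $\mathbb F_\bullet\to\mathbb G_\bullet$, and feeds this into the Kustin--Miller complex construction; one then verifies that the resulting complex of free $R[T]$-modules resolves $\mathcal U$, has length $\operatorname{codim}_R(R/I)+1$, and is again self-dual up to shift, whence $\mathcal U$ is Cohen--Macaulay of that codimension with $\omega_{\mathcal U}\cong\mathcal U$, i.e. Gorenstein. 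The complex-free route of Papadakis--Reid instead computes $\operatorname{Ext}^\bullet_{R[T]}(\mathcal U,R[T])$ directly from the exact sequence \eqref{eqn!no2}, whose outer terms $R/I$ and $R/J$ are Cohen--Macaulay with one-dimensional canonical modules; a diagram chase then shows $\operatorname{Ext}^i_{R[T]}(\mathcal U,R[T])$ vanishes for $i\neq\operatorname{codim}_R(R/I)+1$ and is cyclic in the top degree — again exactly the Gorenstein condition.

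The main obstacle is the step bridging the module-theoretic input and the ring $\mathcal U$: proving that $\mathcal U$ is Cohen--Macaulay of the expected dimension $n$, equivalently that adjoining $T$ subject to the relations $Tf-\phi(f)$ neither lowers the depth nor introduces components beyond the birational locus. The inequality $\dim\mathcal U\ge n$ is free from $\mathcal U[t^{-1}]=S[t^{-1}]$; the reverse inequality together with the depth estimate is the technical heart, and is where the full force of \eqref{eqn!no2} and of the Gorenstein hypotheses on $R/I$ and $R/J$ is used — in the first route it is the acyclicity of the Kustin--Miller complex. Once Cohen--Macaulayness is in hand, the Gorenstein conclusion is automatic: $\omega_{\mathcal U}$ is assembled out of $\omega_{R/I}\cong R/I$ and $\omega_{R/J}\cong R/J$ through the Poincar\'e residue map of \eqref{eqn!no2}, and this forces it to be cyclic.
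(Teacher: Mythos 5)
The paper does not actually prove this theorem: it is quoted from Papadakis--Reid \cite[Theorem~1.5]{PR} in a survey appendix, so there is no in-paper argument to measure you against. Judged on its own terms, your proposal correctly reconstructs the architecture of both known proofs --- the complex-free route of \cite{PR} through the total ring of fractions and the sequence \eqref{eqn!no2}, and the Kustin--Miller complex route surveyed in Subsection~\ref{sec!kmorig} --- and your preliminary steps (reduction to the local case, $\operatorname{Hom}_S(B,S)=0$ because $\bar J$ contains an $S$-regular element, the identification $\operatorname{Ext}^1_S(B,S)\cong\omega_B\cong B$, the two-generator structure of $N$) are sound.

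Nevertheless the proposal has a genuine gap, and you name it yourself: neither of your two ``routes to finish'' is executed. In the Kustin--Miller route, the entire content of the theorem is the acyclicity of the complex $C$ and its self-duality up to shift; you write that ``one then verifies'' this, but that verification \emph{is} the theorem. In the Papadakis--Reid route, the decisive computation is the identification of $\operatorname{Unpr}(J,R/I)$ with the finite birational extension $S':=S[\phi(t)/t]\subseteq Q(S)$, i.e.\ with $N=\operatorname{Hom}_S(\bar J,S)$ endowed with its ring structure, followed by the self-duality $\operatorname{Hom}_S(N,S)\cong N$ obtained by dualizing $0\to S\to N\to\omega_B\to 0$ once more and using $\operatorname{Ext}^1_S(\omega_B,S)\cong B$; it is this computation that simultaneously yields the depth bound (so $S'$ is maximal Cohen--Macaulay over $S$) and $\omega_{S'}\cong S'$. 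Your closing sentence --- that once Cohen--Macaulayness is known, $\omega_{\mathcal U}$ is ``assembled out of'' the outer terms of \eqref{eqn!no2} and ``this forces it to be cyclic'' --- is precisely the duality step that must be proved, not a consequence of Cohen--Macaulayness. A smaller slip: $\dim\mathcal U\ge n$ does not follow from $\mathcal U[t^{-1}]=S[t^{-1}]$ alone, since inverting $t\in\mathfrak m$ can drop the dimension; the correct source of that inequality is the surjection of $\mathcal U$ onto $S'$, a ring finite over $S$.
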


\begin{remark}
The  Kustin-Miller unprojection ring $\operatorname{Unpr}(J,R/I)$  does not depend (up to isomorphism) on the choice of the map $\phi$.
\end{remark}

The following example is the simplest example of Kustin-Miller unprojection.  Although the algebra behind this example 
is quite simple, it has found many applications in birational geometry (\cite[ Applications~ 2.3.]{PR}).

\begin{example} (\textit{Reid's Ax-By argument}) \label{exam!milesarg}
Let $A,B\in R= \fld[x,y,z,w]$.  Assume that 
\[X=V(Ax-~By)\subset~  \mathbb{P}^3\]
 is an irreducible cubic surface and 
\[D=V(x,y)\] is a codimension $1$ subscheme contained in $X$. 

Denote by $I_X, I_D$ the ideals correspond to $X$ and $D$ respectively. We can easily check that the ideals $I_X$ 
and $I_D$ of $R$ satisfy the conditions of Kustin-Miller unprojection. 

Then, $\operatorname{Hom}_{R/I_X}(I_D,R/I_X)$ is generated as $R/I_X$-module by the inclusion map $i$ and $\phi$.
The $R/I_X$-module homomorhism  $\phi\colon I_D\rightarrow  R/I_X$  is the unique $R/I_X$-module homomorphism 
such that  $\phi(x)=B+I_X$ and  $\phi(y)=A+I_X$.

The Kustin-Miller unprojection ring of the pair $I_D\subset R/I_X$ is
\[ \operatorname{Unpr}(I_D,R/I_X)= \frac{ R[T]}{ I_X+(Tx-B, Ty-A)}.\]
Denote by $Y= V(I_X+(Tx-B, Ty-A))\subset  \mathbb{P}^4$. We note that $Y$ is a codimension $2$ complete intersection.

There is a natural projection 
\[\pi: Y \DashedArrow[] X  \, \, with \, \,  [x,y,z,w,T]\mapsto [x,y,z,w] 
\]
with birational inverse the rational map (induced by the module homomorphism $\phi$)
\[\pi^{-1}: X \DashedArrow[] Y  \, \, with \, \,  [x,y,z,w]\mapsto [x,y,z,w,T=\frac{A}y=\frac{B}x] 
\]
We have the following cases:

\begin{itemize}
\item (General Case) If $X$ is smooth then $\pi^{-1}$ is a regular map, the usual Castelnuovo blow-down of the $(-1)$-line $D$.
\item (Special Case) Assume that  $V(A,B) \cap V(x,y)\neq \varnothing $. 
For an explicit example, taken from \cite[p.~21]{P1},
assume $X = V(xz(w+x)-y(z+y)w)$.
Then  $V(A,B) \cap V(x,y)=\{P_1,P_2\}$, where $P_1, P_2$ are $A_1$ singularities 
of $X$.  Denote by $Z$ the graph of $\pi$.  Then $\pi^{-1}$ factorizes as follows:
\[
\begin{tikzcd}[column sep=small]
& Z \arrow[dl] \arrow[dr] & \\
X \arrow[rr, dashed, "\pi^{-1}"]& & Y
\end{tikzcd}
\]
The map $Z\rightarrow X$ is the blowup of $X$ at the points $P_1, P_2$ and the map  $Z\rightarrow Y$ is the blowdown of the strict transform of $D$.
\end{itemize}

\end{example}

\begin{remark}
In the above example, $\pi^{-1}$ is the inverse of a projection, which explains the name unprojection.
\end{remark}

\subsection{Unprojection of a complete intersection inside a complete intersection}\label{subs!citoci}

Assume $R$ is a Gorenstein ring and $I$, $J$ are complete intersection ideals of codimension $r$ and $r+1$ respectively 
with the property $I\subset J$. Following \cite[Section~ 4]{P}, we describe the unprojection ring $\operatorname{Unpr}(J,R/I)$.

The ideals $I,J$ are generated by the regular sequences  $\{u_1, \dots , u_r\}$ and $\{v_1, \dots ,v_{r+1}\}$ respectively. That is,
\[
  I = (u_1, \dots , u_r), \quad J = (v_1, \dots ,v_{r+1}).
\]
Since $I \subset J$, there exists an  $r \times (r+1)$ matrix $A$ with
 \[\begin{pmatrix} 
         u_1 \\ \vdots \\ u_r  
   \end{pmatrix}   = A
     \begin{pmatrix} 
         v_1 \\ \vdots \\ v_{r+1}  
   \end{pmatrix}.  \]

Denote by $A_i$ the $r \times r$ submatrix of $A$ obtained by removing the i-th column of A. We 
set for $1\leq i\leq r+1$, $h_i$ to be the determinant of the matrix $A_i$.

\begin{theorem} (\cite[Theorem~4.3]{P})
Denote by 
\[\phi: J\rightarrow R/I\]
the map such that
\[\, \, \, \, \, \, \, \,\, \, \, \, \, \, \, \, \phi(v_i)=(-1)^{i+1} h_i, \,  \, \, \text{for} \,  \, 1\leq i\leq r+1.\]
The $R/I$-module ${\operatorname{Hom}}_{R/I}(J, R/I)$ is generated by $\phi$ and the canonical inclusion $i\colon J/I\rightarrow~R/I$.
Moreover the unprojection ring,
\[ \operatorname{Unpr}(J,R/I) =\frac{R[T]}{I+(Tv_i-(-1)^{i+1} h_i)}\]
is Gorenstein.
\end{theorem}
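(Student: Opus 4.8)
The plan is to reduce the statement to the general Kustin--Miller machinery recalled in Subsection~\ref{subs!kmunproj433}, the only new ingredient being an explicit computation of Reid's adjoint homomorphism. First one checks that the pair $(J/I,R/I)$ satisfies the Kustin--Miller assumptions: $R/I$ and $R/J$ are Gorenstein because $I$ and $J$ are complete intersections in the Gorenstein ring $R$; $I\subset J$ by hypothesis; and $\dim R/I-\dim R/J=(\dim R-r)-(\dim R-(r+1))=1$. Thus the exact sequence~$(\ref{eqn!no2})$ applies, and $\operatorname{Hom}_{R/I}(J/I,R/I)$ is generated over $R/I$ by the inclusion $i$ together with any single homomorphism whose image under the Poincar\'e residue map generates the cyclic module $R/J$ appearing on the right of~$(\ref{eqn!no2})$. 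Granting, for the moment, that $\phi$ is such a homomorphism, the presentation of the unprojection ring is immediate from Reid's Definition~\ref{def!unprr}: since $J=(v_1,\dots,v_{r+1})$ and multiplication by $T$ and $\phi$ are both $R/I$-linear, the defining ideal $I+(Tr-\phi(r):r\in J)$ equals $I+(Tv_i-(-1)^{i+1}h_i:1\le i\le r+1)$; Gorensteinness of the resulting ring is then exactly the Papadakis--Reid theorem \cite[Theorem~1.5]{PR}. So it remains to establish the two assertions about $\phi$.

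To see that $\phi$ is a well-defined $R/I$-homomorphism $J/I\to R/I$, recall that the relations among the images of $v_1,\dots,v_{r+1}$ in $R/I$ are generated by the Koszul relations $v_ie_j-v_je_i$ together with the rows of the matrix $A$ (these are relations because $u=Av$). So one must verify two families of identities. The first, $v_i\,\phi(v_j)\equiv v_j\,\phi(v_i)\pmod{I}$, i.e. $(-1)^{j+1}v_ih_j\equiv(-1)^{i+1}v_jh_i\pmod{I}$, I would prove by Cramer's rule: deleting the $p$-th column from $(u_1,\dots,u_r)^{t}=A(v_1,\dots,v_{r+1})^{t}$ gives the $r\times r$ system $A_p\,\widetilde v=\widetilde u-v_p\,\widetilde a$ with $\widetilde a$ the $p$-th column of $A$ and $\widetilde u=(u_1,\dots,u_r)^{t}$, whence for $j\ne p$
\[
v_j\,h_p\ =\ \det\!\big(A_p,\ j\text{-th column}\to\widetilde u\,\big)\ -\ v_p\,\det\!\big(A_p,\ j\text{-th column}\to\widetilde a\,\big);
\]
the first determinant lies in $I$ because one of its columns is $\widetilde u$, and the second equals $\pm h_j$ after restoring the column order, so a short sign count yields the congruence. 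The second family, $\sum_{j}a_{kj}(-1)^{j+1}h_j=0$ for $1\le k\le r$, holds because that sum is the Laplace expansion along the first row of an $(r+1)\times(r+1)$ matrix whose first row and whose $k$-th lower row are both equal to the $k$-th row of $A$, so the determinant vanishes; this also shows $\phi(I)=0$, so $\phi$ indeed descends to $J/I$.

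It remains to show that the residue class of $\phi$ generates the cyclic module $\operatorname{Hom}_{R/I}(J/I,R/I)/(R/I)\,i$. The clean route is to compute this class through the comparison of minimal free resolutions over $R$: the surjection $R/I\twoheadrightarrow R/J$ lifts to a morphism of Koszul complexes $K_\bullet(u)\to K_\bullet(v)$, which can be taken to be a morphism of differential graded algebras, so that in homological degree $p$ it is $\textstyle\bigwedge^{p}$ of the map $K_1(u)\to K_1(v)$ determined by $u=Av$; in top homological degree $r$ it is therefore the row vector of signed maximal minors $\big((-1)^{i+1}h_i\big)_{1\le i\le r+1}$ of $A$. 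By the general theory of Kustin--Miller unprojection --- this is where the Gorenstein hypotheses and the self-duality of the Koszul complexes enter; compare Subsection~\ref{subs!kmunproj433}, Subsection~\ref{ssc:oncemore}, and \cite{P} --- Reid's adjoint generator is precisely the homomorphism read off from this top comparison map, namely $v_i\mapsto(-1)^{i+1}h_i$, and under the identification $\operatorname{Hom}_{R/I}(J/I,R/I)/(R/I)\,i\cong\operatorname{Ext}^1_{R/I}(R/J,R/I)\cong\omega_{R/J}$ it is a generator (one also checks that the comparison morphism preserves internal degree). Hence $\phi$ and $i$ generate $\operatorname{Hom}_{R/I}(J/I,R/I)$, which finishes the proof.

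The main obstacle is this last step: identifying the \emph{specific} homomorphism $\phi$, rather than some abstract homomorphism furnished by~$(\ref{eqn!no2})$, as Reid's generator. The resolution-comparison description is the natural tool, but using it forces one to reconcile three sign and normalization conventions --- the Koszul differentials, the connecting map of~$(\ref{eqn!no2})$, and Reid's residue map --- and to supply the degree count that keeps the top comparison map surjective onto $\omega_{R/J}$; everything else is either formal or a one-line determinant computation.
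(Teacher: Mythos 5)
The paper does not actually prove this statement: it appears in the survey appendix as a direct quotation of \cite[Theorem~4.3]{P}, with no argument given, so there is no internal proof to compare against. Your reconstruction follows the route of the cited source, and the parts you carry out are correct: the syzygies of $(\bar v_1,\dots,\bar v_{r+1})$ over $R/I$ are indeed the Koszul relations together with the rows of $A$ (since $v$ is a regular sequence, any relation modulo $I$ differs from a Koszul one by an $R$-combination of the rows of $A$), the Cramer's-rule identity $h_p v_j \equiv \pm v_p h_j \pmod I$ does come out with the right signs, and the Laplace expansion with a repeated row handles both $\phi(I)=0$ and $R/I$-linearity. The one step you assert rather than prove is the substantive one — that the class of this particular $\phi$ generates $\operatorname{Hom}_{R/I}(J/I,R/I)/(R/I)\,i \cong \operatorname{Ext}^1_{R/I}(R/J,R/I)$ — but you have correctly identified the mechanism (the exterior-power lift $K_\bullet(u)\to K_\bullet(v)$ of the surjection $R/I\to R/J$, self-duality of the Koszul complexes, and the degree count), which is exactly how \cite[Section~4]{P} proceeds; completing the sign and normalization bookkeeping you flag would close the argument, and the Gorenstein conclusion is then just \cite[Theorem~1.5]{PR} as you say.
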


We finish this subsection by describing the unprojection ring in a specific example of a complete intersection inside a complete intersection.

\begin{example}
Let $R=\fld[c_i, d_i, x_i]$, where $1\leq i\leq 3$, be the 
standard graded polynomial ring in $9$ variables over a field $\fld$.  Consider the ideals
\[
         I=( c_1x_1+c_2x_2+c_3x_3,  d_1x_1+d_2x_2+d_3x_3),  \quad \quad  J=(x_1, x_2, x_3)
\] 
of $R$. The ideals $I,J$ of $R$ satisfy the assumptions of Kustin-Miller unprojection.

\newpage

We consider the $2\times 3$ matrix 

\[ 
A =  \begin{pmatrix} 
        c_1 &  c_2 &  c_3\\
         d_1 &  d_2  & d_3
   \end{pmatrix}
\]
and, for $1 \leq i \leq 3$, we denote by $A_i$ the $2\times 2$ submatrix of $A$  obtained by removing the $i$-th column of A.
For  $1 \leq i \leq 3$, denote by $|A_i|$ the determinant of $A_i$.
We set
\[h_1= |A_1|= c_2d_3-c_3d_2,\]
\[h_2= |A_2|= c_1d_3-c_3d_1,\]
\[h_3= |A_3|= c_1d_2-c_2d_1,\]
Then, by \cite[Theorem~4.3]{P}, the unprojection ring  of the pair  $J\subset R/I$ is
\[ \operatorname{Unpr}(J,R/I) =\frac{\fld[c_1, c_2, c_3, d_1, d_2, d_3, x_1,x_2,x_3, T]}{I+(Tx_1-h_1, Tx_2-(-h_2), Tx_3-h_3)}.\]

We remark that $ \operatorname{Unpr}(J,R/I)$ is not a complete intersection.

\end{example}

\subsection{Tom and Jerry unprojections}\label{subs!tmjr}
Tom and Jerry families, defined and named by M.~Reid,  are two different families of unprojection which are 
used for the construction of a codimension $4$ Gorenstein ideal with $9$ equations and $16$ first syzygies starting from a codimension $3$ Gorenstein ideal. 
S. Papadakis \cite{P} gave an explicit presentation of the unprojection ring of these families.

Before we introduce Tom and Jerry families of unprojection let us remind some preliminary notions.

\begin{definition}
A \textit{skewsymmetric} matrix  $M=[m_ {ij}]$, $1\leq i, j\leq n$ with entries in a commutative ring $R$ is
 an $n\times n$ matrix such that $m_{i,i}=0$ and $m_{ij}=-m_{ji}$.
\end{definition}

For example, a $5\times 5$ skewsymmetric matrix $M$ is of the form
\[ \begin{pmatrix} 
0 & m_{12} & m_{13} & m_{14}& m_{15}\\
-m_{12} & 0  & m_{23} & m_{24}& m_{25}\\
-m_{13} & -m_{23}  & 0 & m_{34}&  m_{35}\\
-m_{14} & -m_{24}  & -m_{34} & 0&  m_{45}\\
-m_{15} & -m_{25}  & -m_{35} & -m_{45}&  0
\end{pmatrix}\]

\begin{definition}
Let $M=[m_ {ij}]$, $1\leq i, j\leq n$ with entries in a commutative ring $R$ be an $n\times n$ skewsymmetric 
matrix. Denote by $M_i$ the skewsymmetric submatrix of M obtained
by deleting the i-th row and i-th column of M and by $ I_{n/2}$  the $n/2 \times n/2$ identity matrix.
\begin{enumerate}
\item If $n$ is even, we call \textit{Pfaffian} of the matrix $M$ and denote it by $\operatorname{Pf}(M)$, the unique polynomial with the properties
\[ (\operatorname{Pf}(M))^2=\det M\]
and
\[\operatorname{Pf}
 ( \begin{pmatrix} 
0 &  I_{n/2}  \\
-I_{n/2} & 0 
\end{pmatrix})=1\] 
\item If $n$ is odd, we call \textit{ Pfaffians} of the matrix $M$ the set  $\{ \operatorname{Pf}(M_1),
\operatorname{Pf}(M_2),\dots, \operatorname{Pf}(M_n)\}$.
\end{enumerate} 
\end{definition}

\begin{example}
If $n=2,$

\[
\operatorname{Pf} (M) = m_{12}.
\]

If $n=5,$

\[
\operatorname{Pf} (M) =  \{ \operatorname{Pf}(M_1),\operatorname{Pf}(M_2),\dots , \operatorname{Pf}(M_5)\}
\]

\newpage 

where

\[
 \operatorname{Pf}(M_1) = m_{23}m_{45} -m_{24}m_{35}+m_{25}m_{34}, 
\]
\[
 \operatorname{Pf}(M_2) = m_{13}m_{45} -m_{14}m_{35}+m_{15}m_{34} ,
\]
\[
 \operatorname{Pf}(M_3) = m_{12}m_{45} -m_{14}m_{25}+m_{15}m_{24}, 
\]
\[
  \operatorname{Pf}(M_4) = m_{12}m_{35} -m_{13}m_{25}+m_{15}m_{23},
\]
\[
 \operatorname{Pf}(M_5) = m_{12}m_{34} -m_{13}m_{24}+m_{14}m_{23}.
\]
\end{example}

Let $M$ be a $5\times 5$ skewsymmetric matrix. Fix a codimension $4$ complete intersection ideal $J$. We set the following question:

\begin{question}\label{qn!no1}
What conditions should be satisfied by the entries of $M$ such that the codimension $3$ ideal generated by the Pfaffians of $M$ is contained in $J$?
\end{question} 

Two different answers in this question are given by Tom and Jerry. According to our knowledge it is still 
an open question  if Tom and Jerry are the only answers to the Question~\ref{qn!no1}  (\cite[Problem~8.2]{R1}).

\begin{definition}
\begin{enumerate}
\item Assume   $1\leq i\leq 5$. The matrix $M$ is called  $Tom_i$  in $J$  if after we delete the i-th row 
and i-th column of $M$ the remaining entries are elements of the codimension $4$ ideal $J$.

\item  Assume  $1\leq i < j \leq 5$. The matrix $M$ is called  $Jerry_{ij}$ in $J$ if  all the entries of $M$ that 
belong to the i-th row or i-th column or j-th row or j-th column are elements of $J$, while there is no restriction for the remaining entries of $M$.

\end{enumerate}
\end{definition}

\begin{example}\label{exp!no2}
We work over the  polynomial ring   $ R=\fld[y_k, z_k, m_{ij}^k]$  where $1\leq k \leq 4$, $2\leq i < j \leq 5$. 
Assume $J=(z_1,z_2,z_3,z_4)$ is a codimension $4$ complete intersection ideal of $R$.
\begin{enumerate}
\item The matrix
\[
M=  \begin{pmatrix} 
0 & y_{1} & y_{2} & y_{3}& y_{4}\\
-y_{1} & 0  & m_{23} & m_{24}& m_{25}\\
-y_{2} & -m_{23}  & 0 & m_{34}&  m_{35}\\
-y_{3} & -m_{24}  & -m_{34} & 0&  m_{45}\\
-y_{4} & -m_{25}  & -m_{35} & -m_{45}&  0
\end{pmatrix},
\]
where  
\[
 m_{ij}= \sum_{k=1}^{4} m_{ij}^k z_k.
\]
 is an example of a $Tom_1$ matrix in $J$.
\item The matrix
\[
M=  \begin{pmatrix} 
0 & m_{12} & m_{13} & m_{14}& m_{15}\\
-m_{12} & 0  & m_{23} & m_{24}& m_{25}\\
-m_{13} & -m_{23}  & 0 &y_1&  y_2\\
-m_{14} & -m_{24}  & -y_1 & 0&  y_3\\
-m_{15} & -m_{25}  & -y_2 & -y_3&  0
\end{pmatrix},
\]
where  
\[
 m_{ij}= \sum_{k=1}^{4} m_{ij}^k z_k.
\]
 is an example of a $Jerry_{12}$ matrix in $J$.
\end{enumerate}
\end{example}

We finish this subsection by recalling the main ideas of Papadakis' calculation about Tom (\cite[Section 3.3]{P}).

\,

{\bf Papadakis' Calculation about Tom}
We work over the  polynomial ring   $ R=\fld[y_k, z_k, m_{ij}^k]$  where $1\leq k \leq 4$, $2\leq i < j \leq 5$.
Consider the $Tom_1$ matrix $M$ in the ideal $J=(z_1, z_2, z_3, z_4)$ of the first part of Example  \ref{exp!no2}. Denote 
by $ I$ be the ideal generated by the Pfaffians $\operatorname{Pf}(M_1),\operatorname{Pf}(M_2),\dots , \operatorname{Pf}(M_5)$  
of $M$. It is easy to see that $I \subset J.$

Using that  $\operatorname{Pf}(M_2),\dots , \operatorname{Pf}(M_5)$ are linear 
in  $z_1, z_2, z_3, z_4$, there exists a unique  $4\times 4$ matrix $Q$ such that
\[
 \begin{pmatrix} 
\operatorname{Pf}(M_2) \\ \operatorname{Pf}(M_3)  \\ \operatorname{Pf}(M_4)  \\ \operatorname{Pf}(M_5) 
\end{pmatrix} =   Q  \begin{pmatrix} 
z_{1} \\ z_{2} \\  z_{3} \\ z_{4}
\end{pmatrix}.
\]

For  $i = 1,\dots, 4$, let  $ H_i$  be the  $1\times 4$ matrix whose i-th entry is equal to $ (-1)^{i+1}$ times the 
determinant of the submatrix of $Q_{i}$ obtained by $Q$ deleting the i-th column.  
We fix \, $1\leq j\leq 4$. Using that for all $i, j$,  

\begin{center}
$y_{i}  H_{j} = y_{j}  H_{i}.$
\end{center}
we can define four polynomials $g_1, g_2, g_3, g_4$ such that
\begin{center}
$(g_1, g_2, g_3, g_4) =   H_{j}/y_{j}$.
\end{center}
We note that this definition does not depend on the choice of $j$.

Denote by $\phi$ the map which is defined by
\begin{center}
$\phi \colon J \rightarrow R/I$
\end{center}

\begin{center}
 $ z_i \mapsto  g_i$.
\end{center} 
Then, $\operatorname{Hom}_{R/I}(J,R/I)$ is generated as $R/I$-module by the inclusion map $i$ and $\phi$.
Denote by 
\[I_{un}= I+( Tz_1-g_1, Tz_2-g_2,   Tz_3-g_3,   Tz_4-g_4).\]
The unprojection ring of the pair  $J/I\subset R/I$ 
\[ \operatorname{Unpr}(J,R/I) =\frac{\fld[y_k, z_k, m_{ij}^k,T]}{I_{un}}\]
is Gorenstein and the codimension of the ideal $I_{un}$ is equal to $4$.

\begin{remark}
According to our knowledge it is  still an open question  if every codimension~$4$ Gorenstein ring 
with a $9\times 16$  resolution comes from Tom and Jerry unprojections (\cite[Problem~8.3]{R1}).
\end{remark}

\subsection{Parallel Kustin-Miller unprojection} \label{subs!parkmunpr}
 In Algebraic Geometry, especially for applications it is necessary to produce Gorenstein rings of higher 
codimension. For this aim,  Kustin-Miller unprojection can be used more than one time in order to produce 
Gorenstein rings of arbitrary codimension whose properties are controlled by just a few equations as a number 
of new unprojection variables are adjoined.

In this direction, J. Neves and S. Papadakis \cite{NP2} developed a theory which is called parallel Kustin-Miller 
unprojection. The initial data for parallel Kustin-Miller unprojection is a Gorenstein graded ring $R$ and a finite 
set of codimension $1$ ideals $\{J_1,\dots,J_n\}$ such that the quotients $R/J_i$ are Gorenstein and satisfy 
some extra mild assumptions. The unprojection ring that is obtained by this process is defined as the 
quotient of $R[T_1,\dots, T_n]$,  where $T_1,\dots T_n$ are new variables, by an ideal of simple form obtained from
the initial data. Moreover, the new ring is Gorenstein(\cite[Theorem~ 2.3]{NP2}). 

Parallel Kustin-Miller unprojection has found many applications in the construction of new interesting algebraic varieties 
\cite{NP1,NP2,NPI,PV,PV1}. For more details related to parallel Kustin-Miller unprojection we refer the reader to \cite{NP2, PV}.

\subsection{Kustin-Miller complex construction}\label{sec!kmorig}
Kustin-Miller complex construction was introduced by A. Kustin and M. Miller \cite{KM} during 
their efforts to find a structure theorem of Gorenstein rings of codimension $\geq 4$. Given a pair of projective  
resolutions of Gorenstein rings that satisfy certain properties, this 
construction produces a new Gorenstein ring and its resolution. 

Kustin-Miller complex construction has found many applications in algebraic geometry, for example in  the construction 
of some interesting geometric objects such as Campedelli surfaces of general type and Calabi-Yau $3$-folds of 
high codimension \cite{NP1,NP2}. In these cases, the numerical invariants 
of the varieties produced via Kustin-Miller unprojection are controlled by the Kustin-Miller complex construction. Moreover, it has 
found many applications in algebraic combinatorics, for example 
 in the study of face rings of cyclic polytopes and stellar subdivisions of Gorenstein 
complexes \cite{BPBPBP1,BP1, BP3, BP4}. 

J. B\"{o}hm and S. Papadakis \cite{BPBPBP6,BPBPBP5}  developed the Macaulay2 \cite{GS} package "KustinMiller" 
which implements the Kustin-Miller complex construction.

In this subsection, we describe the Kustin-Miller complex construction, following  \cite{BPBPBP1, BPBPBP5, KM}.

\,

We begin by recalling the assumptions of Kustin-Miller unprojection of Subsection~\ref{subs!kmunproj433}.

Assume $R$ is a positively graded polynomial ring over a field. Let $I,J$ be homogeneous ideals of $R$ such that $R/I$ and  $R/J$ are Gorenstein, $I\subset J$ and 
$\dim \  R/I- ~\dim \  R/J=~1$.

\textit{ Kustin-Miller complex construction} is the construction of the graded free resolution of the Kustin-Miller unprojection 
ring $\operatorname{Unpr}(J,R/I)$ (Definition \ref{def!unprr}) from  graded free resolutions of $R/I$ and $R/J$ as $R$-modules. 

Using that $R/I$, $R/J$ are Gorenstein rings, by \cite[Proposition~3.6.11]{BH} there 
are integers $k_1, k_2$ such that $\omega_{R/I}=R/I(k_1)$ and $\omega_{R/J}=R/J(k_2)$. Assume 
that $k_1>k_2$. The assumptions of Kustin-Miller unprojection are satisfied. Hence, we fix a graded 
homomorphism $\phi\colon J\rightarrow R/I$ of degree $k_1-k_2$ such that  $\operatorname{Hom}_{R/I}(J, R/I)$ is generated 
as an $R/I$-module by $\phi$ and the canonical inclusion $i$.  The Kustin-Miller 
unprojection ring of the pair $J\subset R/I$, $\operatorname{Unpr}(J,R/I)$ defined by the $\phi$ (Definition \ref{def!unprr}), 
where $T$ is a new variable of degree $k_1-k_2$, is a positively graded algebra. 

We now describe the construction given by A. Kustin and  M. Miller \cite{KM} of the graded free resolusion 
of the Kustin-Miller unprojection ring $\operatorname{Unpr}(J,R/I)$ (Definition \ref{def!unprr}) from the graded free resolutions of $R/I$ and $R/J$.

Denote by $g$ the codimension of the ideal $J$ of $R$. That is, $g= \dim R-  \dim R/J$. 
Let
\[%
\begin{tabular}
[c]{ll}%
$C_{J}:$ & $R/J\leftarrow \theta_{0}=R\overset{\theta_{1}}{\leftarrow}\theta_{1}\overset{\theta_{2}%
}{\leftarrow}\dots\overset{\theta_{g-1}}{\leftarrow}\theta_{g-1}\overset{\theta_{g}%
}{\leftarrow}\theta_{g}=R(-k_1-\eta)\leftarrow0$\\
$C_{I}:$ & $R/I\leftarrow B_{0}=R\overset{b_{1}}{\leftarrow}B_{1}\overset{b_{2}%
}{\leftarrow}\dots\overset{b_{g-1}}{\leftarrow}B_{g-1}=R(-k_2-\eta)\leftarrow0$%
\end{tabular}
\]
be minimal graded free resolutions  of $R/J$ and $R/I$ respectiely as $R$-modules, where $\eta$ is the sum 
of the degrees of the variables of $R$. Due to Gorenstein property, $C_{I}$ and $C_{J}$ are self-dual (\cite[Corollary~ 21.16]{Eis}).

For an $R$-module $M$, we denote $M^{\prime}:=M\otimes_{R}R[T]$ which is an $R[T]$-module.
Consider the complex%
\[%
\begin{tabular}
[c]{ll}%
$C:$ & $\operatorname{Unpr}(J,R/I)\leftarrow F_{0}\overset{f_{1}}{\leftarrow}F_{1}%
\overset{f_{2}}{\leftarrow}\dots\overset{f_{g-1}}{\leftarrow}F_{g-1}%
\overset{f_{g}}{\leftarrow}F_{g}\leftarrow0$%
\end{tabular}
\]
with the modules, when $g\geq 4$,%
\begin{gather*}%
\begin{tabular}
[c]{lll}%
$F_{0}=B_{0}^{\prime}$, &  & $F_{1}=B_{1}^{\prime}\oplus \theta_{1}^{\prime}%
(k_{2}-k_{1})$%
\end{tabular}
\\%
\begin{tabular}
[c]{ll}%
$F_{i}=B_{i}^{\prime}\oplus \theta_{i}^{\prime}(k_{2}-k_{1})\oplus B_{i-1}^{\prime
}(k_{2}-k_{1})$, & f$\text{or }2\leq i\leq g-2$%
\end{tabular}
\\%
\begin{tabular}
[c]{lll}%
$F_{g-1}=\theta_{g-1}^{\prime}(k_{2}-k_{1})\oplus B_{g-2}^{\prime}(k_{2}-k_{1})$, &
& $F_{g}=B_{g-1}^{\prime}(k_{2}-k_{1}).$%
\end{tabular}
\end{gather*}

If $g=2$, we have%
\begin{gather*}%
\begin{tabular}
[c]{lll}%
$F_{0}=B_{0}^{\prime}$, &  & $F_{1}= \theta_{1}^{\prime}%
(k_{2}-k_{1})$%
\end{tabular}
\\%
\begin{tabular}
[c]{lll}%
$F_{2}=B_{1}^{\prime}(k_{2}-k_{1})$.%
\end{tabular}
\end{gather*}

If $g=3$, we have%
\begin{gather*}%
\begin{tabular}
[c]{lll}%
$F_{0}=B_{0}^{\prime}$, &  & $F_{1}=B_{1}^{\prime}\oplus \theta_{1}^{\prime}%
(k_{2}-k_{1})$%
\end{tabular}
\\%
\begin{tabular}
[c]{lll}%
$F_{2}=\theta_{2}^{\prime}(k_{2}-k_{1})\oplus B_{1}^{\prime}(k_{2}-k_{1})$, &
& $F_{3}=B_{2}^{\prime}(k_{2}-k_{1}).$%
\end{tabular}
\end{gather*}
We will define the differentials of the complex $C$ by specifying chain maps  $\alpha:C_{I}\rightarrow C_{J}$, $\beta:C_{J}\rightarrow
C_{I}[-1]$ and a homotopy map $h:C_{I}%
\rightarrow C_{I}$ which is not required to be a chain map.

Let $t_1$ be the rank of the free $R$-module $A_1$. The self-duality of $C_{J}$ implies that $t_1$ is also 
the rank of  the free R-module $\theta_{g-1}$.

Fix $R$-module bases $e_1,\dots,e_{t_1}$ and ${\hat{e}}_1,\dots,{\hat{e}}_{t_1}$ of $\theta_{1}$ and $\theta_{g-1}$ 
respectively. For $1~\leq i\leq t_1$, we define $c_i, \hat{c_i}\in R$ such that
\[\theta_{1}(e_{i})=c_{i} 1_{R}, \, \, \, \, \,  \theta_{g}(1_R)= \sum_{i=1}^{t_{1}} \hat{c_i} \hat{e_i}\]
where by Gorenstein property, $c_{i}, \hat{c_i}\in J$. 

 For $1\leq i\leq t_1$, denote by $l_i,  \hat{l_i}\in R$  lifts in $R$ of $\phi(c_i)$ and $\phi(\hat{c_i})$ 
respectively. For an $R$-module $A$, we set $A^{\ast}= {Hom}_{R}(A, R)$. For an $R$-basis $f_1,\dots ,f_t$,  
denote by $f_1^{\ast},\dots ,f_t^{\ast}$ the dual basis of $A^{\ast}$.

Let ${{\bar{a}}_{g-1}}^{d}\colon \theta_{g-1}^{\ast}\rightarrow R=B_{g-1}^{\ast}$ be the $R$-module 
homomorphism defined by ${{\bar{a}}_{g-1}}^{d}({\hat{e_i}}^{\ast})=\hat{l_i}1_R$ 
for $1\leq i\leq t_1$. Using that $C_{I}, C_{J}$ are self-dual we get that ${{\bar{a}}_{g-1}}^{d}$ 
extends to  ${{\bar{a}}}^{d}\colon C_{J}^{\ast}\rightarrow C_{I}^{\ast} $. 
Denote by ${\bar{a}}\colon C_{I}\rightarrow C_{J}$ its dual.

By \cite{P}, the map $\bar{a}_{0}\colon B_0=R\rightarrow R=\theta_{0}$ is multiplication by an invertible element of $R$,
say $u$. We set $a=\bar{a}/u$.

Next step is to define a chain map $\beta\colon C_{J}\rightarrow C_{I}[-1]$ by 
extending $\beta_{1}\colon \theta_{1}\rightarrow R=B_{0}$  defined by $\beta_{1}(e_i)=-l_i 1_R$.

The map $\beta_{g}\colon \theta_{g}=R\rightarrow R=B_{g-1}$ is multiplication by a nonzero constant. Then, by 
the proof of  (\cite[Theorem 1.4.]{KM}) there is a homotopy map $h\colon C_{I}\rightarrow C_{I}$ 
with $h_0\colon B_0\rightarrow B_0$ and $h_{g-1}\colon B_{g-1}\rightarrow B_{g-1}$ being 
the zero maps and
\[\beta_{i}\alpha_{i}=h_{i-1}b_{i}+b_{i}h_{i},\text{\quad for }1\leq i\leq g.\]

Denote by $r=\operatorname{rank} \,  B_{t}$. Let $I_{t}$ be the identity $r\times r$ matrix. The differential 
maps $f_i\colon F_{i}\rightarrow F_{i-1}$ of the complex $C$ are defined as: 
\begin{gather*}%
\begin{tabular}
[c]{lll}%
$f_{1}=\left(
\begin{array}
[c]{cc}%
b_{1} & \beta_{1}+T\cdot \theta_{1}%
\end{array}
\right)  $, &  & $f_{2}=\left(
\begin{array}
[c]{ccc}%
b_{2} & \beta_{2} & h_{1}+T\cdot I_{1}\\
0 & -\theta_{2} & -\alpha_{1}%
\end{array}
\right)  $%
\end{tabular}
\\%
\begin{tabular}
[c]{ll}%
$f_{i}=\left(
\begin{array}
[c]{ccc}%
b_{i} & \beta_{i} & h_{i-1}+(-1)^{i}T\cdot I_{i-1}\\
0 & -\theta_{i} & -\alpha_{i-1}\\
0 & 0 & b_{i-1}%
\end{array}
\right)  $ & $\text{for }3\leq i\leq g-2$%
\end{tabular}
\\%
\begin{tabular}
[c]{ll}%
$\hspace{-0.05in}f_{g-1}=\left(
\begin{array}
[c]{cc}%
\beta_{g-1} & h_{g-2}+(-1)^{g-1}T\cdot I_{g-2}\\
-\theta_{g-1} & -\alpha_{g-2}\\
0 & b_{g-2}%
\end{array}
\right)  $, & $f_{g}=\left(
\begin{array}
[c]{c}%
-\alpha_{g-1}+(-1)^{g}\frac{1}{\beta_{g}\left(  1\right)  }T\cdot \theta_{g}\\
b_{g-1}%
\end{array}
\right)  $%
\end{tabular}
\end{gather*}

\begin{theorem}\cite{KM}
The chain complex $C$, known as the Kustin-Miller complex construction, is a graded free resolution of $\operatorname{Unpr}(J,R/I)$  as $R[T]$-module.
\end{theorem}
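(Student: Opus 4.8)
The plan is to establish three facts and then conclude: (i) $C$ is a complex, (ii) $H_0(C)\cong\operatorname{Unpr}(J,R/I)$, and (iii) $C$ is acyclic in positive degrees. For (i), I would multiply out the block matrices $f_if_{i+1}$ and sort the result by powers of $T$. The part free of $T$ vanishes because $\alpha\colon C_I\to C_J$ and $\beta\colon C_J\to C_I[-1]$ are chain maps, because the homotopy identity $\beta_i\alpha_i=h_{i-1}b_i+b_ih_i$ holds, and because the chosen normalizations make $\bar{a}_0$ a unit (divided out, so that $a_0=1$) and $\beta_g(1)$ a nonzero constant; the part linear in $T$ vanishes because $b_\bullet$ and $\theta_\bullet$ are differentials and $\alpha$, $\beta$ intertwine them with the signs built into the $f_i$. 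This is a finite, purely formal check.

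For (ii): in degree one $f_1=(\,b_1\mid\beta_1+T\cdot\theta_1\,)$, so $\operatorname{im}f_1$, as an ideal of $R[T]$, equals $\operatorname{im}b_1+\operatorname{im}(\beta_1+T\theta_1)$. Since $b_1$ is the first differential of the resolution $C_I$ of $R/I$, we have $\operatorname{im}b_1=I\cdot R[T]$; and writing $c_i:=\theta_1(e_i)$, which generate $J$, and $l_i\in R$ for a lift of $\phi(c_i)$, the map $\beta_1+T\theta_1$ sends $e_i\mapsto Tc_i-l_i$. Because $\phi$ is $R/I$-linear, $I$ together with the elements $Tc_i-l_i$ generate exactly the unprojection ideal $I+(Tr-\phi(r)\colon r\in J)$; hence $\operatorname{coker}f_1=\operatorname{Unpr}(J,R/I)$.

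The core is (iii), for which I would use the Buchsbaum--Eisenbud acyclicity criterion: $C$ is acyclic iff $\operatorname{rank}F_i=\operatorname{rank}f_i+\operatorname{rank}f_{i+1}$ (with $f_{g+1}=0$) and $\operatorname{grade}I_{r_i}(f_i)\geq i$ for each $i$, where $r_i=\operatorname{rank}f_i$. The rank identities are bookkeeping from the block shape of the $F_i$ together with the self-duality of the Gorenstein resolutions $C_I$ and $C_J$. For the grade bounds I would localize: if $\mathfrak q\subset R[T]$ has $\operatorname{codim}\mathfrak q<g=\operatorname{grade}(J\cdot R[T])$, then $J\cdot R[T]\not\subseteq\mathfrak q$, so some generator $c_j$ of $J$ is a unit in $R[T]_\mathfrak q$; there $R/J$ localizes to $0$, so $(C_J)_\mathfrak q$ is split exact, and the Kustin--Miller complex of this degenerate pair reduces --- via the contracting homotopy of $(C_J)_\mathfrak q$ --- to the resolution $C_I$ of $R/I$, localized and base-changed along $T\mapsto l_j/c_j$, which resolves $\operatorname{Unpr}(J,R/I)\otimes_RR_{c_j}\cong(R/I)_{c_j}$. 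In particular $(C)_\mathfrak q$ is acyclic, so $I_{r_i}(f_i)\not\subseteq\mathfrak q$. Since $i\leq g$, every prime of codimension $<i$ is covered by this, so $I_{r_i}(f_i)$ lies in no prime of codimension $<i$, whence $\operatorname{grade}I_{r_i}(f_i)\geq i$ ($R[T]$ being Cohen--Macaulay), and the criterion applies. Combined with (i) and (ii), $C$ is a graded free resolution of $\operatorname{Unpr}(J,R/I)$ over $R[T]$.

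I expect the main obstacle to be this last reduction: showing precisely how contracting the split-exact complex $(C_J)_\mathfrak q$ propagates through the mixed differentials of $(C)_\mathfrak q$ --- through the blocks carrying $\alpha$, the homotopy $h$, and the specialized variable $T$ --- so as to leave an acyclic complex. This is the technical heart of \cite{KM}. Step (i), though lengthy, is routine once the identities defining $\alpha$, $\beta$, $h$ are available, and (ii) is a direct cokernel computation.
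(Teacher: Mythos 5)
The paper does not prove this statement: it appears in the survey appendix and is quoted from Kustin--Miller \cite{KM}, so there is no in-paper proof to compare against. Judged on its own terms, your outline reproduces the standard strategy of \cite{KM} (and of \cite{P}): (i) a direct block-matrix verification that $f_if_{i+1}=0$ using the chain-map property of $\alpha$ and $\beta$, the homotopy identity $\beta_i\alpha_i=h_{i-1}b_i+b_ih_i$, and the normalizations $h_0=h_{g-1}=0$, $a_0=1$; (ii) the cokernel computation identifying $\operatorname{im}f_1$ with $I+(Tc_i-l_i)$, which is the unprojection ideal because the $c_i$ generate $J$ and $\phi$ is $R/I$-linear; and (iii) acyclicity via the Buchsbaum--Eisenbud criterion, reduced by localization to primes not containing $J\cdot R[T]$. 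Parts (i) and (ii) are correct and complete in outline, and the logic of (iii) is sound: if $\operatorname{codim}\mathfrak q<i\leq g=\operatorname{grade}(J\cdot R[T])$ then $J\not\subseteq\mathfrak q$, and exactness of $(C)_\mathfrak q$ together with the ``only if'' direction of Buchsbaum--Eisenbud over the Cohen--Macaulay local ring $R[T]_\mathfrak q$ forces $I_{r_i}(f_i)\not\subseteq\mathfrak q$.

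The genuine gap is exactly the one you name and then do not fill: the proof that $(C)_\mathfrak q$ is exact once some $c_j$ is a unit. This is not a formality. One must exhibit, from the contracting homotopy $s$ of the split-exact complex $(C_J)_{c_j}$, an explicit chain homotopy equivalence between $(C)_{c_j}$ and a complex obtained from $(C_I)_{c_j}\otimes_R R[T]$ by adjoining the single relation $Tc_j-l_j$ (e.g.\ a Koszul factor on $Tc_j-l_j$ tensored with $C_I$), and check that the mixed blocks involving $\alpha$, $h$ and the diagonal $T$-terms cancel against $s$. Without this, the grade inequalities in Buchsbaum--Eisenbud are unverified and the acyclicity claim is unsupported; with it, the argument closes. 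So the proposal is the right proof in skeleton, but as written it defers the theorem's technical heart to the very reference being proved.
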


\begin{remark}
In general, the resolution $C$ is not minimal  as indicated, for example, by \cite[Subsection 5.1, Example 3]{BP21}. 
However, it is minimal in some examples coming from algebraic geometry  
\cite{NP1, PV, PV1} and combinatorics  \cite[Theorem 6.1]{BPBPBP1}, \cite{BP1}.
For explicit examples of the Kustin-Miller complex construction, we refer the reader to  \cite[Subsection~6.2]{BPBPBP1}.
\end{remark}

\subsection{The Macaulay2 package KustinMiller} \label{subs!m2pack}

The Macaulay2 package 
"KustinMiller", developed by J. B\"{o}hm and S. Papadakis \cite{BPBPBP5},  is the implementation using Macaulay2~\cite{GS} 
of the Kustin-Miller complex construction. More precisely, the authors present an algorithm which has as input the 
resolutions $C_I$ and $C_J$ of $I$ and $J$ respectively and produces as  output the Kustin-Miller complex $C$ associated to $I$ and $J$
constructed in the previous paragraph.

To use the package, type in Macaulay2  the command line needsPackage"KustinMiller". 

The command  \texttt{kustinMillerComplex} produces the Kustin-Miller complex $C$ from the resolutions $C_I, C_J$. For 
an example, we refer to \cite[Section 4]{BPBPBP5}. Moreover, there are useful  Macaulay2  commands 
such as $\texttt{Hom(J,R\symbol{94}1/I)}$, which determines the unprojection map $\phi$ (Subsection~\ref{subs!kmunproj433}) and  
\texttt{extend} which extends homomorphisms to chain maps. For more details about the package we refer the reader to \cite{BPBPBP6, BPBPBP5}.

\end{document}